\theoremstyle{plain}
\newtheorem{thm}{Theorem}
\newtheorem{thmA}{Theorem}
\newtheorem{propA}[thmA]{Proposition}
\newtheorem{prop}[thm]{Proposition}
\newtheorem{lem}[thm]{Lemma}
\newtheorem{cor}[thm]{Corollary}
\newtheorem{question}[thmA]{Question}
\newcommand{\calU}{{\mathcal U}}
\newcommand{\calD}{{\mathcal D}}
\newcommand{\calJ}{{\mathcal J}}
\newcommand{\calI}{{\mathcal I}}
\newcommand{\ZZ}{{\mathbb Z}}
\newcommand{\NN}{{\mathbb N}}
\newcommand{\cn}{\operatorname{cn}}
\newcommand{\ecn}{\operatorname{ecn}}
\newcommand{\SL}{\operatorname{SL}}
\newcommand{\rk}{\operatorname{rk}}
\newcommand{\supp}{\operatorname{supp}}
\newcommand{\OR}{\operatorname{OR}}
\begin{document}

\title[Products of conjugacy classes in simple algebraic groups]{Products of conjugacy classes in simple algebraic groups in terms of diagrams}
\author{Iulian I. Simion}
\address[Iulian I. Simion]
        { Department of Mathematics\\
          Babeș-Bolyai University\\
          Str. Ploieşti 23-25, Cluj-Napoca 400157, Romania\\
          and
          Department of Mathematics\\
          Technical University of Cluj-Napoca\\
          Str. G. Bari\c tiu 25, Cluj-Napoca 400027, Romania}
\email{iulian.simion@ubbcluj.ro}
\thanks{I am grateful to Prof. Attila Mar\'oti for many discussions on this topic. This work was supported by a grant of the Ministry of Research, Innovation and Digitalization, CNCS/CCCDI–UEFISCDI, project number PN-III-P1-1.1-TE-2019-0136, within PNCDI III}
\subjclass[2020]{Primary 20G99; Secondary 05E16}
\keywords{conjugacy class, simple algebraic group, Dynkin diagram}

\maketitle
\begin{abstract}
  For a simple algebraic group $G$ over an algebraically closed field 
  we study products of normal subsets.
  For this we mark the nodes of the Dynkin diagram of $G$. 
  We use two types of labels,
  a binary marking and a labeling with non-negative integers.
  The first is used to recognize large conjugacy classes which appear in a product of two conjugacy classes while the second is used to keep track of multiplicities of regular diagrams.
  In particular, we formulate sufficient conditions in terms of marked diagrams, for a product of normal subsets in $G$ to contain regular semisimple elements.
\end{abstract}

\section{Introduction}

A normal subset of a group $G$ is defined to be a non-empty union of conjugacy classes in $G$. By a result of Liebeck and Shalev \cite[Theorem 1.1]{Liebeck_Shalev}, there is a universal constant $c$ such that whenever $N$ is a non-central normal subset in a non-abelian finite simple group $G$ then $N^{k} = G$ for any integer $k$ at least $c \cdot (\log_{2} |G|/ \log_{2} |N|)$. This was generalized by Mar\'oti and Pyber in \cite[Theorem 1.2]{Maroti_Pyber_diameter_bound}, where they prove that there exists a universal constant $c$ such that if $N_{1}, \ldots , N_{k}$ are non-central normal subsets in a non-abelian finite simple group $G$ satisfying $\prod_{i=1}^{m} |N_{i}| \geq |G|^{c}$, then $N_{1} \cdots N_{k} = G$.

The \emph{covering number $\cn(G,N)$} of a group $G$ by a normal subset $N$ of $G$ is the smallest integer $k$ such that $N^k=G$ or $\infty$ if no such $k$ exists. The \emph{covering number $\cn(G)$} of $G$ is the smallest integer $k$ such that $N^k=G$ for any normal subset $N$ of $G$ which is not contained in any proper normal subgroup of $G$. The \emph{extended covering number $\ecn(G)$} of $G$ is the smallest integer $k$ such that $N_1\cdots N_k=G$ whenever $N_1,\ldots, N_k$ are normal subsets of $G$ not contained in any proper normal subgroup of $G$ \cite{Gordeev_Saxl_1}.

By results of Gordeev \cite{Gordeev_1_2}, if $G$ is a simple algebraic group defined over an algebraically closed field of characteristic $0$ then $\cn(G)\leq 4\cdot\rk(G)$ where $\rk(G)$ is the Lie rank of $G$. This result was extended by Ellers, Gordeev and Herzog \cite{Ellers_Gordeev_Herzog} to the case of quasisimple Chevalley groups. More precisely, they show that for such a group $G$ we have $\cn(G)\leq 2^{13}\cdot \rk(G)$. Gordeev and Saxl \cite{Gordeev_Saxl_1} show that there is a constant $c$ such that for any Chevalley group $G$ defined over any field, we have $\ecn(G)\leq c\cdot \rk(G)$. Moreover, if $G$ is a Chevalley group defined over an algebraically closed field, they show that $\ecn(G)\leq 4\cdot \rk(G)$. Extending \cite[Theorem 1.1]{Liebeck_Shalev}, upper bounds on covering numbers of unipotent conjugacy classes are given in terms of their dimension and in terms of their (co)ranks in \cite{2021_unipotent} for $G$ a simple algebraic group over an algebraically closed field $k$ of good characteristic. Recall that the characteristic $p$ of $k$ is good for $G$ if $p\neq 2$ when $G$ is not of type $A$, $p\neq 3$ if $G$ is an exceptional group and $p\neq 5$ if $G$ is of type $E_8$. Theorem C in \cite{2021_unipotent} is extended and improved by \cite[Theorem 1]{Liebeck_Simion} where it is shown that there exists
an absolute constant $c\leq 120$ such that whenever $C$ is a non-central
conjugacy class of a simple algebraic group $G$ then $C^k = G$ for any
integer $k$ at least $c \cdot (\dim(G)/ \dim(C))$.

The methods used in the context of Chevalley groups give explicit constants for the upper bounds on the (extended) covering numbers which are missing in \cite[Theorem 1.1]{Liebeck_Shalev} and \cite[Theorem 1.2]{Maroti_Pyber_diameter_bound}. On the other hand the results in \cite{Liebeck_Shalev,Maroti_Pyber_diameter_bound} take into account the size of the normal subsets and should have analogous statements for Chevalley groups. Bridging the two types of results should entail an analysis of simple algebraic groups over algebraically closed fields which are easier to deal with than finite simple groups of Lie type or Chevalley groups over arbitrary fields, yet resembles these type of groups closely through the BN-pair structure. Central in this context is to measure the size of the normal subset $C_1C_2$ for two conjugacy classes $C_1$ and $C_2$ of $G$.

The product of small conjugacy classes in simple groups grows rapidly in the following sense. By a result of Liebeck, Schul and Shalev \cite[Theorem 1.3]{Liebeck_Schul_Shalev}, given any $\epsilon > 0$, there exists $\delta > 0$ such that if $N_1$, $N_2$ are normal subsets of a non-abelian finite simple group $G$ satisfying $|N_{i}| \leq {|G|}^{\delta}$ for $i = 1$, $2$, then $|N_{1}N_{2}| \geq (|N_{1}| |N_{2}|)^{1-\epsilon}$. An analogue of this statement for algebraic groups is \cite[Theorem 1.5]{Liebeck_Schul_Shalev}. Given any $\epsilon > 0$, there exists $\delta > 0$ such that if $C_1$ and $C_2$ are conjugacy classes in a simple algebraic group $G$ defined over an algebraically closed field and satisfying $\dim (C_{i}) \leq \delta \dim (G)$ for $i = 1$, $2$, then the product $C_{1}C_{2}$ contains a conjugacy class of dimension at least $(1 - \epsilon)(\dim (C_1) + \dim (C_2))$. 

Products of large conjugacy classes cover $G$ rapidly in the following sense. By a result of Gow \cite[Theorem 2]{Gow}, if $G$ is a finite simple group of Lie type, then for any two regular semisimple conjugacy classes $C_1$ and $C_2$ the product $C_1C_2$ contains any non-identity semisimple element of $G$. Hence, the product of four such classes equals $G$. Many of the results on large classes are motivated by Thomposon's conjecture. The analogue for a simple algebraic group $G$ states that there exists a conjugacy class $C$ such that $C^2=G$. We refer to the survay in \cite{Malle_Ore} for more background on this. 
For a simple algebraic group $G$ over an algebraically closed field the product of $4$ regular conjugacy classes equals $G$ (see for example \cite[Lemma 2.1]{Liebeck_Simion}).
When considering a product $N_1\cdots N_k$ of several normal subsets which equals $G$ one would like to understand which normal subsets can make up such a product. By the above, if $k=4$ then regular classes are possible. As $k$ increases, one would like to understand which smaller classes can be used in such a product.

From a different perspective, the Arad-Herzog conjecture states that the product of two non-trivial conjugacy classes in a non-abelian finite simple group $G$ is never a conjugacy class in $G$. Guralnick, Malle and Tiep \cite{Guralnick_Malle_Tiep} prove a strong version of the Arad-Herzog conjecture for simple algebraic groups and in particular show that almost always the product of two conjugacy classes in a simple algebraic group consists of infinitely many conjugacy classes. Guralnick and Malle \cite{Guralnick_Malle} classify pairs of conjugacy classes in almost simple algebraic groups whose product consists of finitely many classes.

\bigskip

In this paper, $G$ denotes a simple algebraic group defined over an algebraically closed field of characteristic $p > 0$. To a normal subset $N\subseteq G$ we attach a set of marked diagrams by means of the representatives in a Borel subgroup $B$. More precisely, for $g\in N\cap B$, the marked diagram of $g$ is the Dynkin diagram of $G$ in which we mark the node corresponding to a simple root $\alpha$ if the projection of $g$ on the root group $U_\alpha$ is not $1$ (see \S\ref{marked_diagrams}). The set of all these diagrams - obtained from elements in $N\cap B$ - is denoted by $\calD(N)$. If $p$ is a good prime for $G$, i.e. $p\neq 2$ if $G$ is not of type $A$, $p\neq 3$ if $G$ is an exceptional group and $p\neq 5$ if $G$ is of type $E_8$, then marked diagrams extend the notion of distinguished diagrams used in the Bala-Carter classification of unipotent conjugacy classes \cite[\S5.11]{Carter_Finite}.

Marked diagrams offer a way of measuring the `size of a conjugacy class' not only for algebraic groups over algebraically closed fields but for simple groups of Lie type and Chevalley groups over arbitrary fields as well: for a small conjugacy class $C$ the set $\calD(C)$ contains diagrams with few marked nodes while for a large class $C$ there are diagrams with many marked nodes in $\calD(C)$. If $\calD^{\circ}$ denotes the diagram with all nodes marked then $\calD^{\circ}\in\calD(N)$ whenever $N$ contains a regular conjugay class (see Propositions \ref{regular_fully_marked} and \ref{u_reg}). Moreover, for two classes $C_1$ and $C_2$ of $G$ if $D_1\in \calD(C_1)$ and $D_2\in \calD(C_2)$ then $D_1\boxplus D_2\subseteq \calD(C_1C_2)$ where $D_1\boxplus D_2$ is the marked diagram obtained by marking exactly those nodes which are marked in both $D_1$ and $D_2$ (see Proposition \ref{sum_diagram_Nor}).

Marked diagrams can also be viewed as elements $\sum_{\alpha\in\Delta} n_{\alpha} \alpha$ of the monoid $\NN\Delta=\NN^{|\Delta|}$ where $\Delta$ is a set of simple roots and $n_{\alpha}\in\NN$ (see \S\ref{monoids}). In this notation $\calD^{\circ}$ is the marked diagram $\sum_{\alpha\in\Delta}\alpha$. Let $D = \sum_{\alpha\in\Delta} n_{\alpha} \alpha$ and $D' = \sum_{\alpha\in\Delta} m_{\alpha} \alpha$ be two diagrams with $n_{\alpha},m_{\alpha}\in\NN$. Their sum is $D+D'=\sum_{\alpha\in\Delta}(n_{\alpha}+m_{\alpha})\alpha$. There is a partial order $\geq$ on $\NN\Delta$ defined by $D \geq D'$ if and only if $n_{\alpha} - m_{\alpha} \geq 0$ for all $\alpha \in \Delta$. This gives a way of addressing questions on products of classes in terms of calculations in the monoid $\NN\Delta$. A first statement in this direction is the following.

\begin{propA}
  \label{normal_sets_prod_regular_G}
  Let $G$ be a simple algebraic group, defined over an algebraically closed field. 
  Let $N_1,\dots,N_k$ be normal subsets of $G$ and let $D_{i} \in \calD(N_{i})$ for all $1 \leq i \leq k$.
  If $\sum_{i=1}^{k} D_{i}\geq 12\cdot \rk(G)\cdot\calD^{\circ}$, then $\prod_{i=1}^{k}N_i=G$.
\end{propA}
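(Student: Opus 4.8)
The plan is to deduce the statement from two ingredients: a core result producing regular semisimple elements in products with a sufficiently large diagram-sum, and the classical fact that the product of four regular conjugacy classes is all of $G$ (\cite[Lemma 2.1]{Liebeck_Simion}). Concretely, I would first establish a theorem of the following shape: there is an absolute constant $c$ (the analysis below yields $c=3$) such that whenever $E_i\in\calD(M_i)$ for normal subsets $M_i$ and $\sum_i E_i\ge c\cdot\rk(G)\cdot\calD^\circ$, the product $\prod_i M_i$ contains a regular semisimple element, and hence---being a normal subset---a full regular semisimple conjugacy class. Granting this, Proposition~\ref{normal_sets_prod_regular_G} follows by writing $12=4c$ and splitting the data: I organize the index set $\{1,\dots,k\}$ into four disjoint subfamilies $I_1,\dots,I_4$ with $\sum_{i\in I_j}D_i\ge c\cdot\rk(G)\cdot\calD^\circ$ for each $j$, so that each $\prod_{i\in I_j}N_i$ contains a regular semisimple class $R_j$. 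Since products of normal subsets commute and are again normal (for $x\in M$, $y\in M'$ one has $xy=y\,(y^{-1}xy)\in M'M$), one gets $\prod_{i=1}^k N_i\supseteq R_1R_2R_3R_4=G$, using that $G\cdot S=G$ for any nonempty $S$ to absorb the leftover factors. As a reassuring shortcut, the hypothesis already forces at least $4\rk(G)$ of the $N_i$ to be non-central, whence $\ecn(G)\le 4\rk(G)$ from \cite{Gordeev_Saxl_1} gives the conclusion directly; but this bypasses the diagram machinery the statement is meant to illustrate.

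For the core result I would argue in the Borel subgroup. Choosing representatives $g_i\in M_i\cap B$ realizing $E_i$ and writing $g_i=t_iu_i$ with $t_i\in T$ and $u_i\in U$, a marked simple node $\alpha$ records exactly that the $U_\alpha$-component of $u_i$ is nonzero. Because $M_i$ is stable under conjugation, I may freely replace each $g_i$ by a $B$-conjugate; conjugation by $T$ rescales the height-one components by the corresponding characters, while conjugation by $U$ alters only the higher-height components, through the Chevalley commutator relations. The target is to choose these conjugates so that the product $\prod_i g_i\in B$ has regular semisimple part. The clean point is that once the image of $\prod_i g_i$ in $T=B/U$ is a regular element $t$, the whole coset $tU$ is a single conjugacy orbit---for regular $t$ no positive root is trivial on $t$, so $v\mapsto {}^{t^{-1}}(v^{-1})\,v$ is a bijection of $U$---and therefore $\prod_i g_i$ is conjugate to $t$ and regular semisimple. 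The accumulated multiplicity, $c\cdot\rk(G)$ in each simple direction, is what the height-ordered bookkeeping needs in order to simultaneously make every simple direction active and to push the semisimple part into the regular locus; Propositions~\ref{regular_fully_marked} and~\ref{u_reg}, which identify $\calD^\circ$ as the diagram of regular classes, confirm that $\calD^\circ$ is the correct target to aim the multiplicity at.

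The main obstacle is precisely this core step. The marked diagram constrains only the simple-root (height-one) part of the chosen $B$-representatives and records nothing about the torus parts $t_i$; indeed, if all the $M_i$ are unipotent then every $t_i=1$ and the naive ``project to $T$'' strategy cannot produce a regular semisimple part at all. One must therefore genuinely exploit the commutator relations and the surplus multiplicity to manufacture a regular semisimple component inside the product, rather than read it off the diagram---this is where the quantitative linear-in-$\rk(G)$ threshold, and hence the constant $12$, is really paid for. The four-way splitting of the diagram-sum is a secondary, purely combinatorial obstacle in the monoid $\NN\Delta$: because the simple roots form a structured diagram rather than an arbitrary set system, I would carry out the splitting by processing the multiplicity root by root with enough slack that the four subfamilies can be taken disjoint, the factor $12=4\cdot 3$ being chosen exactly so that both the per-batch regularity threshold and the disjointness are simultaneously attainable.
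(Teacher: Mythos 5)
Your parenthetical ``shortcut'' is in fact a complete proof, and it is essentially the remark the paper itself makes immediately after stating the proposition: since each $D_i$ contributes at most $1$ to each node, the hypothesis forces at least $12\rk(G)$ of the $D_i$ to be non-empty, hence at least $12\rk(G)\geq 4\rk(G)$ of the $N_i$ to contain non-central classes, and $\ecn(G)\leq 4\rk(G)$ from \cite{Gordeev_Saxl_1} finishes (central factors only translate the product). So the statement is established, but not by the route you spend most of your effort on.

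Your main route has a genuine gap, and it also mislocates where the factor $\rk(G)$ is spent. The ``core result'' (a threshold of $c\cdot\rk(G)\cdot\calD^{\circ}$ with $c=3$ producing regular semisimple elements) is never proved: the only mechanism you offer is to arrange the image of $\prod_i g_i$ in $T=B/U$ to be regular, and you yourself observe that this is impossible when the classes are unipotent; no repair is indicated. In the paper the $\rk(G)$ factor is purely combinatorial: Lemma \ref{sum_of_diagrams} shows that $\sum_i D_i\geq m\cdot\rk(G)\cdot\calD^{\circ}$ permits a partition into $m$ batches each summing to just $\calD^{\circ}$ (a minimal such batch can ``waste'' up to $\rk(G)\cdot\calD^{\circ}$, which is why the rank enters), and each batch yields a $U$-regular class via Proposition \ref{sum_diagram_Nor} and Corollary \ref{sum_to_regular}. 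The analytic work of converting this into regular semisimple elements is Proposition \ref{U_regular_T}: six $U$-regular classes contain every regular semisimple class, proved by splitting $\Delta$ into two orthogonal halves and using the factorization $\calU^{3}=G$ inside the rank-one subgroups $G_{\alpha}$ to sweep out the whole maximal torus; twelve batches then give $S^{2}=G$ with $S$ the open set of regular semisimple elements. Nothing linear in $\rk(G)$ is ``paid for'' at that stage --- indeed Theorem \ref{normal_sets_prod_regular} extracts regular semisimple elements already from $16\calD^{\circ}$, with no rank factor. Finally, your four-way splitting into subfamilies each summing to $3\rk(G)\cdot\calD^{\circ}$ is also unjustified: the minimality argument behind Lemma \ref{sum_of_diagrams} only controls batches whose target is $\calD^{\circ}$, and you give no argument that a per-batch target of $3\rk(G)\cdot\calD^{\circ}$ can be met four times simultaneously.
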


Comparing this result to \cite{Gordeev_Saxl_1}, where $\ecn(G)$ is shown to be less than $4\cdot \rk(G)$, we don't obtain anything new in terms of the extended covering number of $G$. The condition on the diagrams $D_i$ implies that there are at least $12\cdot\rk(G)$ conjugacy classes in the product, which by \cite{Gordeev_Saxl_1} has to equal $G$.

Since the product of two open subsets of $G$ equals $G$, it is natural to ask when a product $N_1\cdots N_k$ of normal subsets contains an open subset of $G$. For this, notice that any set $A$ of diagrams is partially ordered. For a diagram $D\in A$, let $A(\geq D):=\{E\in A:E\geq D\}$. We say that $A$ is of type $(r,s)$ with respect to the diagrams $D_1,\ldots,D_r\in A$ if there is a partition of $A$ into $r$ subsets $A_1,\dots, A_r$ such that $|A_i\cap A(\geq D_i)|\geq s$ for all $1\leq i\leq r$.

\begin{propA}
  \label{normal_sets_prod_order}
  Let $G$ be a simple algebraic group, defined over an algebraically closed field. 
  Let $N_1,\dots,N_k$ be normal subsets of $G$ and let $D_{i} \in \calD(N_{i})$ for all $1 \leq i \leq k$. If the set $\{D_i:1\leq i\leq k\}$ is of type $(r,6)$ with respect to $D_{i_1},\dots, D_{i_r}$ and $\sum_{j=1}^{r} D_{i_j}\geq \calD^{\circ}$ then $\dim\prod_{i=1}^{m}N_i=\dim G$.
\end{propA}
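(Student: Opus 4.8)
The plan is to reduce the statement to a density assertion and then exploit the block structure coming from the type-$(r,6)$ hypothesis. First I would record the elementary algebra of normal subsets: the product $P:=\prod_{i=1}^{k}N_i$ is again a normal subset, and since for normal subsets $M,M'$ one has $MM'=M'M$, the factors may be permuted freely. Moreover, for any nonempty $M$ and any $g\in M$ the right translate $Pg\subseteq PM$ gives $\dim(PM)\ge\dim P$, so inserting or deleting factors never decreases the dimension of the product. Because $G$ is irreducible and $P$ is constructible and conjugation invariant, $\dim P=\dim G$ is equivalent to $P$ being dense, hence to $P$ containing a nonempty open subset. Finally, writing $G_{rss}$, $T_{rss}$ for the regular semisimple loci, the conjugation map $G\times T_{rss}\to G_{rss}$ is dominant with $\rk(G)$-dimensional fibres, so a conjugation-invariant $P$ is dense as soon as $P\cap T_{rss}$ is dense in $T$. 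Thus it suffices to produce in $P$ a $\rk(G)$-dimensional family of regular semisimple classes.

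Next I would set up the blocks. By the type-$(r,6)$ hypothesis there is a partition of $\{D_i\}$ into $A_1,\dots,A_r$ with at least six indices $\ell$ in each $A_j$ whose diagram satisfies $D_\ell\ge D_{i_j}$; write $S_j:=\supp(D_{i_j})$. Grouping the corresponding factors by block (legitimate by commutativity) and discarding the unused factors (harmless by the monotonicity above), it is enough to treat the product $P_j$ of the six chosen normal subsets in block $j$ and then the product $\prod_{j=1}^{r}P_j$. Iterating Proposition~\ref{sum_diagram_Nor}, the $\boxplus$ of the six diagrams lies in $\calD(P_j)$; since each dominates $D_{i_j}$, this $\boxplus$ marks at least $S_j$, so $P_j$ has a Borel representative marking every node of $S_j$. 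As $\sum_{j=1}^{r}D_{i_j}\ge\calD^{\circ}$ forces $\bigcup_j S_j=\Delta$, combining these representatives and using the torus action $x_\alpha(c)\mapsto x_\alpha(\alpha(s)c)$ to rule out cancellation in the additive simple-root coordinates yields $\calD^{\circ}\in\calD(P)$.

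The previous paragraph only produces a fully marked element of $P\cap B$, and this alone does not give full dimension: a regular unipotent class is fully marked (Propositions~\ref{regular_fully_marked} and~\ref{u_reg}) yet its class has dimension $\dim G-\rk(G)$. The crux, and the step I expect to be the main obstacle, is to upgrade ``fully marked'' to ``full dimension'', which requires escaping $B$ and controlling the semisimple part of the product --- exactly the data that $\calD(\,\cdot\,)$, computed from $B$-representatives, does not see directly. The plan here is to use the six factors in each block to realize a regular element of the subsystem subgroup $G_{S_j}=\langle U_{\pm\alpha}:\alpha\in S_j\rangle$, so that the six-fold product, rather than a single $B$-representative, develops a nontrivial semisimple part, as already happens for two regular unipotents in $\SL_2$; and then to show that as $j$ ranges over the blocks, whose supports cover $\Delta$ with the multiplicities recorded by $\sum_j D_{i_j}\ge\calD^{\circ}$, the semisimple parts of $\prod_j P_j$ sweep out a dense subset of $T$. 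Here the number six is what should guarantee enough regular factors per block to invoke the product-of-regular-classes and $\ecn(G)\le 4\,\rk(G)$ inputs underlying Proposition~\ref{normal_sets_prod_regular_G}, and a generic-smoothness/dominance argument for the multiplication map would then promote the resulting $\rk(G)$-dimensional family of regular semisimple classes to the desired equality $\dim\prod_{i=1}^{k}N_i=\dim G$.
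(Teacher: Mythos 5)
Your reductions in the first two paragraphs are sound and consistent with the paper's framework, and you correctly locate the crux: a fully marked diagram in $\calD(P)$ is not enough, and one must show that the product sweeps out a dense subset of a maximal torus. But at exactly that point your argument stops being a proof and becomes a plan (``the plan here is to\dots'', ``the number six is what should guarantee\dots'', ``would then promote\dots''), and the ingredients you propose for executing it are not the right ones. Invoking $\ecn(G)\le 4\rk(G)$ or the product-of-four-regular-classes result inside the subsystem subgroup $G_{S_j}$ does not work with only six factors per block: the former would require on the order of $4\rk(G_{S_j})$ factors (unbounded in $|S_j|$), and the latter would require the six $G$-classes to supply honest regular conjugacy classes of $G_{S_j}$, which is not what the hypothesis $D_\ell\ge D_{i_j}$ gives. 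It gives $B$-representatives whose Levi components mark every node of $S_j$, i.e.\ are $U$-regular in $L_{S_j}$; these are slices of $G$-classes, not $L_{S_j}$-classes, so only $T$- or $L$-conjugation is available and one cannot simply quote covering results for the subsystem group.

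The mechanism you are missing is the one in Proposition \ref{U_regular_T}: partition $S_j$ into two subsets of pairwise orthogonal simple roots, write each of the six block representatives as $sqv$ with $v$ a product of nontrivial elements of the commuting rank-one groups $G_\alpha$, and use $T$- and $L$-conjugation together with $\calU^3=G_\alpha$ (Corollary \ref{coro_UUU}) to realize every element of $T\cap G_\alpha$ --- three factors per orthogonal half, hence exactly six in total, with the unipotent-radical contributions collecting on the right. This yields, for every $t$ in $T_j:=T\cap G_{S_j}$, an element of the form $s_jtq_t$ in the block product; multiplying the blocks and using that $\sum_j D_{i_j}\ge\calD^{\circ}$ forces $\bigcup_j S_j=\Delta$ gives all of $T_1\cdots T_r=T$ up to a fixed translate and a right unipotent factor, and a regular semisimple torus element then absorbs that unipotent factor by conjugacy. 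Without this (or an equivalent) explicit realization of the torus, your final generic-smoothness/dominance step has nothing to apply to, so as written the proof is incomplete at its central step.
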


Since the product of four subsets of $G$, each of which contains a regular conjugacy class, equals $G$, it is natural to ask when a product of normal subsets $N_1\cdots N_n$ contains a regular element.

\begin{thmA}
  \label{normal_sets_prod_regular}
  Let $G$ be a simple algebraic group, defined over an algebraically closed field. 
  Let $N_1,\dots,N_k$ be normal subsets of $G$ and let $D_{i} \in \calD(N_{i})$ for all $1 \leq i \leq k$.
  If $\sum_{i=1}^{k} D_{i}\geq 16\calD^{\circ}$, then $\prod_{i=1}^{k}N_i$ contains regular semisimple elements.
\end{thmA}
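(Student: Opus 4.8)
The plan is to derive the theorem from the openness of the regular semisimple locus together with a dimension estimate for the product. Write $G^{\mathrm{rss}}$ for the set of regular semisimple elements of $G$; it is a nonempty, open, hence dense, subset of the irreducible variety $G$. For each $i$ I would fix a representative $g_i\in N_i\cap B$ realizing the diagram $D_i$ and pass to its conjugacy class $C_i:=g_i^{\,G}\subseteq N_i$, so that $D_i\in\calD(C_i)$ and $\prod_{i=1}^k C_i\subseteq\prod_{i=1}^k N_i$. Each $C_i$ is irreducible (it is the image of $G$ under conjugation), so $\prod_{i=1}^k C_i$, being the image of $C_1\times\cdots\times C_k$ under multiplication, is irreducible and constructible. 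It therefore suffices to prove that $\dim\prod_{i=1}^k C_i=\dim G$: a constructible subset of full dimension in the irreducible variety $G$ is dense and hence contains a nonempty open subset, which necessarily meets the dense open set $G^{\mathrm{rss}}$. This reduction also explains why the conclusion is ``contains regular semisimple elements'' rather than ``equals $G$''.

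The core is thus the bound $\dim\prod_{i=1}^k C_i=\dim G$ under $\sum_{i=1}^k D_i\geq 16\,\calD^{\circ}$, which says precisely that every simple root is marked in at least $16$ of the diagrams $D_i$. The mechanism I would exploit is that multiplying a partial product $X$ on the right by a class $C_i$ whose diagram marks a simple root $\alpha$ allows movement in the directions $U_{\pm\alpha}$: choosing the conjugate of $g_i$ appropriately relative to $X$ increases $\dim X$ along $\alpha$ until that direction is saturated. Since the simple roots generate the whole root system, saturating every simple direction (and taking the resulting commutators inside the product) forces $\prod_i C_i$ to fill a dense subset of $G$. To organize the count one groups the indices so that each simple root is assigned a dedicated batch of classes marking it, and feeds this into a dimension estimate in the spirit of Proposition~\ref{normal_sets_prod_order}; the surplus of $16$ over the threshold $6$ there is meant to provide the room both to select reference diagrams whose supports cover all of $\Delta$ and to distribute the remaining classes so that each reference is dominated at least six times.

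The step I expect to be the main obstacle is making this dimension estimate explicit and uniform, i.e. proving that $16$ markings per node suffice in every Dynkin type. The delicate case is that of classes whose diagrams have large, mutually overlapping support: there one cannot cleanly assign a single simple direction to each class, and a naive application of the reference-and-partition bookkeeping of Proposition~\ref{normal_sets_prod_order} breaks down, since such a class may mark many nodes yet dominate no small reference actually present in the collection. To handle this I would instead prove a direct lower bound for $\dim(C\cdot X)-\dim X$ in terms of the marked nodes of $C$ not yet saturated by $X$, and iterate. As an independent check on the constant, and an alternative route to a single witness, I would try to construct one regular semisimple element explicitly as an ordered product of rank-one contributions---one per simple root, together assembling a Coxeter-type element of $N_G(T)$ with regular image in $T$---each rank-one factor being manufactured from the batch of classes marking the corresponding node, using that in the rank-one subgroup $\langle U_\alpha,U_{-\alpha}\rangle$ a short product of non-central elements already realizes elements of every semisimple type.
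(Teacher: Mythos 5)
Your argument has a genuine gap, and it is exactly the step you flag as ``the main obstacle''. You reduce the theorem to the claim $\dim\prod_{i=1}^{k}C_i=\dim G$ and then do not prove it: the proposed mechanism --- that multiplying a partial product by a class marking $\alpha$ ``allows movement in the directions $U_{\pm\alpha}$'' and increases dimension until saturation --- is never formulated as a precise lemma, and no lower bound for $\dim(C\cdot X)-\dim X$ is established. Note also that you have committed yourself to something strictly stronger than the statement: the hypothesis $\sum_{i}D_i\geq 16\calD^{\circ}$ forces neither $k\geq 4\rk(G)$ (so Gordeev--Saxl does not apply directly in large rank) nor the type-$(r,6)$ condition of Proposition~\ref{normal_sets_prod_order} (take, say, $r=17$ and seventeen pairwise incomparable diagrams $\calD^{\circ}-\alpha_i$; no diagram in the collection is dominated six times), so neither of the two density results you hope to lean on is available, and your proposed repair is precisely the unproved estimate. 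In addition, marked diagrams only record projections onto the \emph{positive} simple root groups, so access to $U_{-\alpha}$ requires conjugation and is not free.

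The paper avoids any dimension count in $G$. For $k=16$ all $D_i$ are regular and Proposition~\ref{U_regular_T} applies; for $k\geq 17$ and $\rk(G)\leq 8$ it quotes Gordeev--Saxl; for $\rk(G)>8$ it passes to a parabolic $P=LQ$ whose Levi has a type-$A$ factor and a rank-three factor, shows that modulo a fixed coset $\tilde zQ$ the product contains a \emph{regular} subtorus $\tilde TT_{\Delta_2}$ built from coroots (Lemmas~\ref{regular_torus_A}--\ref{thmA_typeA}), and concludes because every element of $\tilde t\tilde zQ$ with $\tilde t\tilde z$ regular semisimple is conjugate to $\tilde t\tilde z$. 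The resulting set of regular semisimple elements is far from full-dimensional, which is why the theorem asserts only containment of such elements. Your closing idea of assembling one regular semisimple witness from rank-one contributions is close in spirit to Proposition~\ref{U_regular_T} and Lemma~\ref{regular_torus_A}, but as written it too is only a sketch; making it work requires the orthogonal decomposition of $\Delta$ and the factorization $\calU^{3}=G$ that the paper supplies.
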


Proposition \ref{normal_sets_prod_order} shows in particular that if $\sum_{i=1}^{k} D_{i}\geq 4\calD^{\circ}$ and each diagram $D_i$ appears at least $6$ times then the product of the corresponding normal subsets contains an open subset of $G$. It follows that if we replace $6$ by $12$ the product of the corresponding subsets is $G$. Theorem \ref{normal_sets_prod_regular} shows that if the diagrams can be partitioned into $4$ subsets, each of which sum up to a regular diagram (a diagram which is greater than or equal to $\calD^{\circ}$) then the product of the corresponding subsets is $G$. This suggests that it should be possible to remove $\rk(G)$ in Proposition \ref{normal_sets_prod_regular_G}.

\begin{question}
  Let $G$ be a simple algebraic group, defined over an algebraically closed field. 
  Let $N_1,\dots,N_k$ be normal subsets of $G$ and let $D_{i} \in \calD(N_{i})$ for all $1 \leq i \leq k$.
  If $\sum_{i=1}^{k} D_{i}\geq c\cdot\calD^{\circ}$ for some constant which does not depend on $G$, does it follow that $\prod_{i=1}^{k}N_i=G$? 
  \end{question}

An affirmative answer to this question would give 
in particular a means of recognizing which conjugacy classes can appear in a product $N_1\cdots N_k=G$ for a fixed $k$. By Proposition \ref{diags_jordan_classes0}, conjugacy classes in the same Jordan class have the same marked diagrams and Proposition \ref{diags_jordan_classes} exhibits conjecturally maximal marked diagrams of a conjugacy class. Notice that it suffices to give an answer to the above question for classical groups of heigh rank, a treatment of the bounded rank case is needed for a good upper bound on the constant $c$.

\bigskip
  
The paper is structured as follows: in \S\ref{notation_background} we fix notation, we collect results on conjugacy classes in algebraic groups which are relevant to marked diagrams and we recall a factorization of $G$ which is needed in the sequel. In \S\ref{marked_diagrams} we introduce the notion of a marked diagram and the associated monoids. In \S\ref{marked_diagrams_unipotent} we describe the link between marked diagrams and unipotent elements. The proofs of Propositions \ref{normal_sets_prod_regular_G} and \ref{normal_sets_prod_order} and of Theorem \ref{normal_sets_prod_regular} are given in \S\ref{diagrams_and_conjugacy_classes}.

  \section{Preliminaries}
\label{notation_background}

In this paper $G$ denotes a simple algebraic group of rank $r=\rk(G)$ defined over an algebraically closed field $F$ of characteristic $p>0$. 
We fix a Borel subgroup $B$ with unipotent radical $U$ and maximal torus $T$. We let $\Phi$ denote the roots of $G$ with respect to $T$, the set of positive roots $\Phi^{+}$ are with respect to $U$ and $\Delta$ denotes the set of simple roots of $\Phi$ in $\Phi^{+}$. We denote by $U^{-}$ the radical of the Borel subgroup opposite to $B$, i.e. $U=U^{\dot w_0}$ for some representative $\dot w_0\in N_G(T)$ of the longest element (with respect to $\Delta$) of the Weyl group $N_G(T)/T$. 

For each root $\alpha\in\Phi$ let $u_{\alpha}:F\rightarrow U_{\alpha}$ be an isomorphism from the additive group of the ground field $F$ onto the root subgroup $U_{\alpha}$. For each $\alpha\in\Phi$ we denote by $\alpha^{\vee}:F^{\times}\rightarrow T$ the cocharacter corresponding to the root $\alpha$. Then
\begin{equation}
  \label{cochar_action}
        {}^{\alpha^{\vee}(t)}u_{\beta}(x)
        =\alpha^{\vee}(t)u_{\beta}(x)\alpha^{\vee}(t)^{-1}
        =u_{\beta}(\beta(\alpha^{\vee}(t))x)
        =u_{\beta}(t^{\langle\beta,\alpha\rangle}x)
\end{equation}
for all $\alpha,\beta\in\Phi$, $t\in F^{\times}$, $x\in F$ (see \cite[II\S1.3]{Jantzen_Reductive} and \cite[Ch.7]{Carter_Simple}).

Any element $g\in B$ has a unique factorization of the form $g=s\prod_{\alpha\in\Phi^{+}}u_{\alpha}(x_{\alpha})$ for some $s\in T$, $x_{\alpha}\in F$ and where the product is in a fixed (but arbitrary) ordering of $\Phi^{+}$ (see for example \cite[Theorem 11.1]{Malle_Testerman}). The projections $g\mapsto u_{\alpha}(x_\alpha)$ depend in general on the ordering of $\Phi^{+}$. However, the projections on simple root groups do not depend on this order as can be seen from the commutator relations (see for example \cite[Theorem 11.8]{Malle_Testerman}). We point out that $[U,U]\subseteq \prod_{\alpha\in\Phi^{+}-\Delta}U_{\alpha}$ (this can be deduced from \cite[Proposition 11.5]{Malle_Testerman} and the commutator relations).
In what follows we make use of this fact without further notice. For a unipotent element $u=\prod_{\alpha\in\Phi^{+}}u_{\alpha}(x_{\alpha})$ we denote by $\supp(u)$ the set of simple roots $\alpha$ with the property that $x_{\alpha}\neq 0$. For an element $s\in T$ we denote by $\supp(s)$ the set of simple roots $\alpha$ with the property that $\alpha(s)\neq 1$.

For an element $g\in G$ we denote by $g_s$ and $g_u$ the semisimple and the unipotent part in the Jordan decomposition of $g$ respectively: $g=g_sg_u=g_ug_s$. In any algebraic group, all Borel subgroups, respectively all maximal tori are conjugate and any element in $G$ is conjugate to an element in $B$. Thus, conjugating if necessary, we may assume that $g$ lies in $B$ and that $g_s$ lies in $T$: we may conjugate $B$ and $T$ by the same element in $G$, or, when $g$ is a representative of a conjugacy classe, we may replace $g$ by a $G$-conjugate with the above property. 

For a set of roots $I\subseteq\Phi$, let $\Phi_{I}$ be the root subsystem generated by $I$, i.e. $\Phi_{I}=\ZZ I\cap\Phi$. We denote by $L_I$ the subgroup $\langle T,U_{\alpha}:\alpha\in\Phi_{I}\rangle$ of $G$. If the roots in $I$ are simple then $L_I$ is a standard Levi subgroup. In this case, we denote by $P_I$ the standard parabolic subgroup with Levi factor $L_I$. When we need to specify the ambient group $G$, we write $L_I^{G}$ or $P_I^{G}$. Notice that the notation $L_I^{G}$ and $P_I^{G}$ makes sense in the more general case of a reductive algebraic group $G$. In the particular case of $I=\{\alpha\}\subseteq\Delta$ we denote by $P_{\alpha}$ the parabolic subgroup $P_{I}$ and by $G_{\alpha}$ the subgroup generated by $U_{\pm\alpha}$.

\subsection{Semisimple conjugacy classes}
\label{semisimple_classes}
For an element $g\in G$ we have $C_G(g)=C_{C_G(g_s)}(g_u)$. Hence, describing the conjugacy class of $g$ entails two parts: the description of $C_G(g_s)$ and the description of unipotent conjugacy classes in $C_G(g_s)$. The structure of the centralizer of a semisimple element in $G$ is known. In the following theorem we extract a combinatorial description which we use in the description of Jordan classes in Section \ref{jordan_classes}. Recall that the connected components of the centralizers of semisimple elements in $G$ are called pseudo-Levi subgroups \cite{McNinch_Sommers}. The Levi-envelope of a pseudo-Levi $H$ is the minimal Levi subgroup $L$ of $G$ containing $H$ such that $Z(H)^{\circ}=Z(L)^{\circ}$ (see \cite[\S3]{Carnovale_Esposito}).

\begin{thm}[Centralizers of semisimple elements]
  \label{good_rep_ss}
  Let $G$ be a simple algebraic group and let $s$ be a semisimple element contained in the maximal torus $T$. There is a subset $I\subseteq\Delta$ such that exactly one of the following holds:
  \begin{enumerate}
  \item $C_G(s)^{\circ}$ is the Levi-subgroup $L_I$, or
  \item there is a root $\gamma\notin\Delta$ and a simple root $\beta\in\Delta$ such that $C_G(s)^{\circ}$ is the proper pseudo-Levi subgroup $L_{I\cup\{\gamma\}}$ with Levi-envelope $L_{I\cup\{\beta\}}$.
    \end{enumerate}
\end{thm}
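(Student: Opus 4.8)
The plan is to recognise the statement as a repackaging, in terms of the simple roots $\Delta$ and the extended Dynkin diagram, of the classical description of connected centralizers of semisimple elements (Borel--de Siebenthal, in the form of McNinch--Sommers and Carnovale--Esposito). The starting point is the standard structure result that
\[
  H:=C_G(s)^{\circ}=\langle T,\ U_{\alpha}:\alpha\in\Phi,\ \alpha(s)=1\rangle ,
\]
so that $H$ is connected reductive of maximal rank $r$ with root system the \emph{closed} subsystem $\Psi:=\{\alpha\in\Phi:\alpha(s)=1\}$; closedness is immediate from $(\alpha+\beta)(s)=\alpha(s)\beta(s)$. The whole problem is then to choose a good base of $\Psi$.

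First I would produce the Levi-envelope directly: set $S:=Z(H)^{\circ}$ and $L:=C_G(S)$. Since $S\subseteq T$, the group $L$ is a Levi subgroup, and after conjugating by $N_G(T)$ I may take $L=L_{J}$ standard with $J\subseteq\Delta$; it contains $H$ and is the minimal Levi with $Z(L)^{\circ}=S$, i.e. the Levi-envelope. Passing to the semisimple quotient $\bar L=L/S$, the image $\bar H=H/S=C_{\bar L}(\bar s)^{\circ}$ is a connected centralizer with \emph{finite} centre, hence of maximal rank and trivial connected centre in $\bar L$. This reduces everything to the maximal case inside $\bar L$.

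Second I would apply Borel--de Siebenthal inside $\bar L$: in each simple factor, $\bar H$ is either the whole factor or is obtained by deleting one node of that factor's extended Dynkin diagram, its base being the remaining simple roots together with the negative highest root $-\theta_i$ of the factor. The crucial structural point is that \emph{at most one} factor of $\bar L$ is proper, so that a single pair $(\gamma,\beta)$ suffices, with $\gamma=-\theta_i$ the added non-simple node and $\beta\in\Delta$ the deleted simple node, the remaining base elements forming $I\subseteq\Delta$. For classical $G$ this single-split phenomenon is transparent from the eigenvalues of $s$: a Levi of $G$ has a unique factor of the same type as $G$, and only that factor is split (by the $\pm1$-eigenspaces, into two orthogonal resp. symplectic blocks), the other eigenvalues contributing type-$A$, hence Levi, factors. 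For exceptional $G$ it is the assertion that connected centralizers arise from \emph{one-step}, not iterated, node deletions, equivalently that $s$ may be placed in the closure of the fundamental alcove, whose walls are the affine simple roots. With $\gamma,\beta,I$ in hand the dichotomy reads off: if no factor is proper then $\Psi=\Phi_I$ and $H=L_I$ is a Levi, which is case (1); otherwise $H=L_{I\cup\{\gamma\}}$ with $\gamma\notin\Delta$, and since $H\subsetneq L=L_{I\cup\{\beta\}}$ it is a proper pseudo-Levi with Levi-envelope $L_{I\cup\{\beta\}}$, which is case (2). The two alternatives are mutually exclusive, as $H$ is a Levi exactly when $\gamma$ is absent.

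The step I expect to be the main obstacle is the single-split claim itself, together with its compatibility with the envelope: one must guarantee that the non-Levi behaviour is concentrated in a single factor of $\bar L$, so that one $\gamma$ and one $\beta$ suffice over the \emph{same} $I$. In positive characteristic there is the further subtlety that the Borel--de Siebenthal correspondence with the extended Dynkin diagram is sensitive to the isogeny type and to torsion primes; rather than reprove it I would cite the McNinch--Sommers classification of pseudo-Levi subgroups and the description of Levi-envelopes in Carnovale--Esposito, and use them to pin down $\gamma$ and $\beta$.
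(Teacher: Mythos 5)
Your proposal is correct and follows essentially the same route as the paper: identify $C_G(s)^{\circ}$ as $\langle T,U_\alpha:\alpha(s)=1\rangle$, form the Levi-envelope $L_J=C_G(Z(C_G(s)^{\circ})^{\circ})$, pass to the quotient by the central torus, observe that at most one simple factor is not of type $A$, and invoke McNinch--Sommers (their Proposition 30) to obtain a base contained in $\bar J\cup\{\bar\gamma\}$ with $-\bar\gamma$ the highest root of that factor, whence the dichotomy. The only cosmetic difference is that the paper makes explicit the small rank-counting step ($|\bar K|=|\bar J|$, since $\bar T$ lies in $\bar M_s$) showing that exactly one simple root is traded for $\gamma$, which you leave implicit in the node-deletion picture.
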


\begin{proof}
  The subgroup $M_s=C_G(s)^{\circ}$ is the pseudo-Levi subgroup given by $\langle T,U_{\alpha}:\alpha(g_s)=1\rangle$ \cite[II \S4.1]{Springer_Steinberg}. Let $Z_s$ denote the center of $M_s$. Then $L_s=C_G(Z_s^{\circ})$ is a Levi subgroup, the Levi envelope of $M_s$ (see \cite[\S3]{Carnovale_Esposito}). Conjugating, we may assume that it is a standard Levi subgroup, i.e. $L_s=L_J$ for some $J\subseteq\Delta$.

  The pseudo-Levi $M_s$ is a subgroup of $L_s$ and the torus $Z_s^{\circ}$ is a maximal central torus of $M_s$ and of $L_s$ \cite[Lemma 3.7]{Carnovale_Esposito}. Factoring we obtain the semisimple subgroup $M_s/Z_s^{\circ}$ of the semisimple group $L_s/Z_s^{\circ}$. Under the projection $L_s\rightarrow L_s/Z_s^{\circ}$, $x\mapsto \bar x$, the centralizer of $\bar s$ in $\bar L_s$ is $\bar M_s$.

  Since $Z_s^{\circ}$ is contained in the maximal torus $T$ which lies in $M_s$ and $L_s$, the projection $x\mapsto \bar x$ induces a bijection $\alpha\mapsto \bar \alpha$ on the roots of $L_s$ w.r.t. $T$ and the roots of $\bar L_s$ w.r.t. $\bar T$. The root system $\bar \Phi_J$ decomposes into a direct sum of irreducible root systems $\bar\Phi_i$ with $0\leq i\leq m$ for some integer $m$. Since $G$ is a simple algebraic group and $L_s$ is a standard Levi subgroup of $G$, at most one of the $\bar\Phi_i$ is not of type $A$. Renumbering, we may assume that $\bar\Phi_0$ has this property.

  Conjugating if necessary, we may assume that the root system of $\bar M_s$ with respect to $\bar T$ has a basis $\bar K\subseteq \bar J\cup \{\bar\gamma\}$ where $-\bar\gamma$ is the highest root of $\bar\Phi_0$ \cite[Proposition 30]{McNinch_Sommers}. 

  The rank of $\bar M_s$ is at most that of $\bar L_s$, i.e. $|\bar K|\leq |\bar J|$. We claim that $|\bar K|=|\bar J|$. If this is not the case, then $\bar M_s$ has rank at most $|\bar J|-1$, i.e. a maximal torus of $\bar M_s$ has dimension at most $|\bar J|-1$. However the $|\bar J|$-dimensional torus $\bar T$ lies in $\bar M_s$, which is a contradiction.

  It follows that $\bar K$ is either $\bar J$ or $\{\bar\gamma\}\cup(\bar J-\{\bar\beta\})$ for some $\bar\beta\in\bar J\cap\bar\Phi_0$. If $\bar K=\bar J$ then (1) holds with $I=K$. If $\bar K = \{\bar\gamma\}\cup(\bar J-\{\bar\beta\})$ then (2) holds with $I=J-\{\beta\}$. 
\end{proof}

For subsets $I\subseteq\Delta$ and $I'\subseteq\Phi$ the pair $(I,I')$ is called \emph{of proper pseudo-Levi type} if $[L_{I'},L_{I'}]$ is a proper maximal rank subsystem subgroup of $[L_{I},L_{I}]$. The pair $(I,I')$ is called \emph{of proper Levi type} if $I'=I$ is a Levi subgroup. The pair $(I,I')$ is called \emph{of pseudo-Levi type} if it is of proper pseudo-Levi type or of proper Levi type. Examples can be constructed with the Borel-de Siebenthal algorithm (see for example \cite[\S13.2]{Malle_Testerman}). In particular for type $B_r$ one may choose $I=\Delta$ and $I'=\Delta\setminus \{\alpha_r\}\cup\{-\alpha_0\}$ where $\alpha_0$ is the highest root, in order to obtain $[L_{I'},L_{I'}]$ of type $D_r$. Notice also that for type $A_r$ the semisimple conjugacy classes are of proper Levi type.

\subsection{Unipotent conjugacy classes}
\label{unipotent_classes}
For our purposes, we use the Bala-Carter-Pommerening classification of unipotent conjugacy classes \cite{Bala_Carter,Pommerening} (see also \cite[Theorem 5.9.6 and \S5.11]{Carter_Finite}). For this, we require the characteristic of the ground field to be good for $G$, i.e. $p\neq 2$ if $G$ is not of type $A$, $p\neq 3$ if $G$ is of exceptional type and $p\neq 5$ if $G$ is of type $E_8$. The classification of unipotent classes in the case where the characteristic of the ground field is bad for $G$ was achieved through the contribution of many authors. The state of the art for unipotent conjugacy classes is available in \cite{Liebeck_Seitz}.

For an element $g\in G$, the group $C_G(g_s)^{\circ}$ is connected reductive, hence a central product of simple algebraic groups and a central torus with no non-trivial unipotent elements in the center. Therefore, the conjugacy class of $g_u$ is a product of unipotent conjugacy classes in the simple factors of $C_G(g_s)^{\circ}$. The following theorem is well known and translates directly to the case where $G$ is a connected reductive algebraic group. The statement is implicit in \cite[Theorem 1]{Liebeck_Seitz}.

Recall that a unipotent element is distinguished if $C_G(u)^{\circ}$ is unipotent. For a parabolic subgroup $P=LQ$ with Levi factor $L$ and unipotent radical $Q$ we have $\dim L\geq \dim (Q/[Q,Q])$. The group $P$ is a distinguished parabolic subgroup if $\dim L=\dim (Q/[Q,Q])$ \cite[\S2.5-6]{Liebeck_Seitz}. An element $g$ of a parabolic subgroup $P$ is called a Richardson element of $P$ if the $P$-conjugacy class of $g$ intersects $Q$ in an open subset of $Q$. 

\begin{thm}[Conjugacy classes of unipotent elements]
  \label{good_rep_u}
  Let $G$ be a simple algebraic group defined over an algebraically closed field of good characteristic. There is a bijective correspondence between unipotent conjugacy classes of $G$ and $G$-classes of pairs $(L,P)$, where $L$ is a Levi subgroup of $G$ and $P$ is a distinguished parabolic subgroup of $[L,L]$. The $G$-class of $(L,P)$ corresponds to the $G$-conjugacy class containing a Richardson element of $P$.
\end{thm}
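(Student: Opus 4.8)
The plan is to follow the two-step structure of the Bala--Carter--Pommerening theory: first reduce an arbitrary unipotent class to a \emph{distinguished} class inside a canonically attached Levi subgroup, and then parametrize the distinguished classes of a semisimple group by its distinguished parabolic subgroups via Richardson orbits. The good-characteristic hypothesis is exactly what makes the associated-cocharacter machinery (the positive-characteristic substitute for $\mathfrak{sl}_2$-triples and Jacobson--Morozov) available; see \cite{Bala_Carter,Pommerening} and \cite[\S2--4]{Liebeck_Seitz}.

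First I would set up the reduction. Given a unipotent element $u$, choose a maximal torus $S$ of $C_G(u)^{\circ}$ and put $L=C_G(S)$. Since $S$ is a torus, $L$ is a Levi subgroup of $G$, and $u\in L$ because $u$ centralizes $S$; as $L/[L,L]$ is a torus, in fact $u\in[L,L]$. The key point is that $S$ is a maximal central torus of the reductive group $C_L(u)^{\circ}=C_{C_G(u)^{\circ}}(S)$: any torus in $C_{[L,L]}(u)$ commutes with $S$, so together with $S$ it would be a torus of $C_G(u)^{\circ}$ strictly larger than $S$ unless it is trivial. Hence $u$ is distinguished in $[L,L]$. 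Using that all maximal tori of $C_G(u)^{\circ}$ are conjugate under $C_G(u)^{\circ}$, the pair $(L,\,\text{class of }u\text{ in }[L,L])$ is well defined up to $G$-conjugacy, and $L$ is a minimal Levi subgroup containing $u$. This assigns to each unipotent class a $G$-class of pairs $(L,C')$ with $C'$ a distinguished class in $[L,L]$.

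Next I would treat the distinguished case, so assume $G$ semisimple and $u$ distinguished. Here I invoke an associated cocharacter $\lambda\colon\GG_m\to G$ of $u$: in good characteristic such a $\lambda$ exists, is unique up to $C_G(u)^{\circ}$-conjugacy, and grades the Lie algebra so that $u$ lies in the weight-$2$ unipotent subgroup. The parabolic $P=P(\lambda)=LQ$ determined by the non-negative weights of $\lambda$ is then independent of the choices up to conjugacy, and the distinguishedness of $u$ translates, through the weight-space dimension count, into the equality $\dim L=\dim(Q/[Q,Q])$ defining a distinguished parabolic. Moreover $u$ is a Richardson element of $P$, so its $P$-orbit is dense in $Q$. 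This produces a well-defined map from distinguished classes to $G$-classes of distinguished parabolics.

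Conversely, given a distinguished parabolic $P=LQ$, Richardson's dense-orbit theorem provides a unipotent element $u$ whose $P$-orbit is open in $Q$; the defining equality $\dim L=\dim(Q/[Q,Q])$ forces $C_G(u)^{\circ}$ to be unipotent, i.e.\ $u$ is distinguished, and one checks that the associated parabolic of this $u$ is conjugate to $P$. The two constructions are mutually inverse up to conjugacy, which yields the bijection in the distinguished case; combining this with the reduction step, and recording that the Richardson element of the distinguished parabolic $P\subseteq[L,L]$ is precisely the distinguished representative of $C'$, gives the statement. The main obstacle lies entirely in the distinguished case in positive characteristic: establishing the existence and conjugacy of associated cocharacters and identifying $P(\lambda)$ with a distinguished parabolic, which is exactly where the good-characteristic hypothesis is indispensable (it is Pommerening's extension of Bala--Carter, the characteristic-zero argument resting instead on $\mathfrak{sl}_2$-theory).
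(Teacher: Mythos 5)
The paper does not actually prove this theorem: it is quoted as a known result, with the proof delegated to Bala--Carter, Pommerening, \cite[Theorem 5.9.6 and \S 5.11]{Carter_Finite} and \cite[Theorem 1]{Liebeck_Seitz}. So there is no in-paper argument to compare yours against; what you have written is an outline of exactly the standard proof found in those references, and as an outline it is sound. Your reduction step is essentially complete: taking $S$ a maximal torus of $C_G(u)^{\circ}$ and $L=C_G(S)$, the torus argument showing $u$ is distinguished in $[L,L]$ is correct (a connected group with no nontrivial torus is unipotent), and the same characterization of $L$ as $C_G(Z(L)^{\circ})$ with $Z(L)^{\circ}$ maximal in $C_G(u)^{\circ}$ is what yields injectivity of the overall correspondence --- a point you assert but do not spell out. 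One cosmetic slip: $C_L(u)^{\circ}$ is not reductive in general (it has a unipotent radical), though this does not affect the argument, which only uses tori.

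The genuine gap, which you yourself flag, is that the entire distinguished case is deferred rather than proved: the existence and uniqueness up to $C_G(u)^{\circ}$-conjugacy of an associated cocharacter in good characteristic, the weight-space count identifying $P(\lambda)$ as a distinguished parabolic with $u$ Richardson in it, and --- in the converse direction --- the fact that a Richardson element $u$ of a distinguished parabolic $P$ satisfies $C_G(u)^{\circ}\subseteq P$ (density of the $P$-orbit only gives $\dim C_P(u)=\dim L$, not that the full centralizer lies in $P$, so ``forces $C_G(u)^{\circ}$ to be unipotent'' needs the cocharacter machinery again). These are precisely the theorems of Pommerening and Premet that the cited sources establish, so your proposal is an accurate roadmap but not a self-contained proof; given that the paper itself treats the statement as a citation, that is an acceptable, indeed matching, level of detail.
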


For two subsets of simple roots $K\subseteq J\subseteq\Delta$, the pair $(J,K)$ is called \emph{distinguished} if $P_{K}^{[L_J,L_J]}$ is a distinguished parabolic subgroup of $[L_{J},L_{J}]$.

\subsection{Jordan classes}
\label{jordan_classes}
For algebraic groups, Jordan classes were introduced in \cite{Carnovale_Esposito} inspired by the similar notion for the adjoint action of a group on its Lie algebra (see \cite[\S4]{Carnovale_Esposito}). Conjugacy classes in $G$ can be grouped together as follows. Two conjugacy classes are equivalent \cite[\S4]{Carnovale_Esposito} if they have representative $g$ and $h$ respectively, such that $C_G(h_s)^{\circ}=C_G(g_s)^{\circ}$, $h_s\in g_sZ(C_G(g_s)^{\circ})^{\circ}$ and $h_u$ is conjugate to $g_u$ in $C_G(g_s)^{\circ}$. The unions of elements in the corresponding equivalence classes are called Jordan classes. The set of Jordan classes is denoted by $\calJ$. They partition $G$ into a finite number of irreducible normal subsets and the conjugacy classes in a Jordan class have the same dimension.

In view of the previous sections we assume in this section that the characteristic of $F$ is good for $G$ and consider the set
$$
\calI=\Big\{(I,I',J,K):(I,I')\text{ is of pseudo-Levi type},J\subseteq I',(J,K)\text{ is distinguished}\Big\}.
$$
We define a map
$$
\phi:\calI\rightarrow\calJ
$$
as follows. Fix $(I,I',J,K)\in\calI$. By \cite[Proposition 32]{McNinch_Sommers}, since the characteristic of $F$ is good for $G$, for a given pair $(I,I')$ of pseudo-Levi type, there exists a semisimple element $s$ in $G$ such that $C_G(s)^{\circ}$ is $L_{I'}$. By \cite[Proposition 1]{Richardson} there exists a unipotent element $u\in C_G(s)^{\circ}$ such that $u$ is a Richardson element of $P_{K}^{[L_J,L_J]}$. Define $\phi(I,I',J,K)$ to be the Jordan class of $su$. For any pair $(s,u)$ as above, we say that \emph{$su$ realizes the data $(I,I',J,K)$}.

\begin{cor}
  \label{classification}
  The map $\phi:\calI\rightarrow\calJ$ is well-defined and surjective.
\end{cor}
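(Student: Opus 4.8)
The plan is to prove that $\phi:\calI\rightarrow\calJ$ is well-defined and surjective by unwinding the definitions and appealing to the structural theorems already established. For \emph{well-definedness}, I must check that the Jordan class of $su$ depends only on the data $(I,I',J,K)$ and not on the choices made along the way: the choice of semisimple $s$ with $C_G(s)^{\circ}=L_{I'}$, and the choice of Richardson element $u$ in $P_K^{[L_J,L_J]}$. First I would observe that any two semisimple elements $s,s'$ with $C_G(s)^{\circ}=C_G(s')^{\circ}=L_{I'}$ differ by an element of $Z(L_{I'})^{\circ}=Z(C_G(s)^{\circ})^{\circ}$, so $s'\in sZ(C_G(s)^{\circ})^{\circ}$, which is exactly the second condition in the definition of Jordan equivalence. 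For the unipotent part, by Theorem \ref{good_rep_u} (applied inside the reductive group $L_{I'}=C_G(s)^{\circ}$, using that $J\subseteq I'$ and $(J,K)$ is distinguished) the distinguished pair $(J,K)$ determines a single unipotent class of $L_{I'}$, namely the class of a Richardson element of $P_K^{[L_J,L_J]}$; hence any two valid choices of $u$ are conjugate in $C_G(s)^{\circ}$, matching the third condition. Combining these, $su$ and $s'u'$ satisfy all three defining relations of Jordan equivalence, so they lie in the same Jordan class and $\phi$ is well-defined.

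For \emph{surjectivity}, I would start from an arbitrary Jordan class $\calJ$ and produce data in $\calI$ mapping to it. Pick a representative $g=g_sg_u$ with $g_s\in T$ and $g\in B$, as arranged in the preliminaries. Apply Theorem \ref{good_rep_ss} to $g_s$: its connected centralizer $C_G(g_s)^{\circ}$ is either a standard Levi $L_I$ (the proper Levi case) or a proper pseudo-Levi $L_{I\cup\{\gamma\}}$ with Levi envelope $L_{I\cup\{\beta\}}$ (the proper pseudo-Levi case). In either case this produces, after possibly conjugating, a pair $(I,I')$ of pseudo-Levi type: in the Levi case take $I'=I$, and in the pseudo-Levi case take $I'=I\cup\{\gamma\}$ so that $[L_{I'},L_{I'}]$ is a proper maximal-rank subsystem subgroup of $[L_{I\cup\{\beta\}},L_{I\cup\{\beta\}}]$. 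Next, the unipotent part $g_u$ lies in the reductive group $C_G(g_s)^{\circ}=L_{I'}$, so by Theorem \ref{good_rep_u} applied to $L_{I'}$ its class corresponds to a pair $(L_J,P)$ with $L_J$ a Levi subgroup of $L_{I'}$ and $P$ a distinguished parabolic of $[L_J,L_J]$; writing $P=P_K^{[L_J,L_J]}$ gives a distinguished pair $(J,K)$ with $J\subseteq I'$. This yields $(I,I',J,K)\in\calI$, and by construction $g=g_sg_u$ realizes this data, so $\phi(I,I',J,K)$ is the Jordan class of $g$.

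The main obstacle I anticipate is bookkeeping the conjugation and the subtle compatibility between the two classification theorems, rather than any deep difficulty. Concretely, Theorem \ref{good_rep_ss} is stated up to conjugacy, so after fixing the standard Levi/pseudo-Levi form of $C_G(g_s)^{\circ}$ I must be careful that the subsequent application of Theorem \ref{good_rep_u} is performed inside this \emph{same} conjugated copy of $L_{I'}$, and that the resulting $J\subseteq I'$ is genuinely a subset of simple roots of $G$ (possible because $L_{I'}$ contains $T$ and its roots are a subsystem, so its Levi subgroups are governed by subsets of a base of $\Phi_{I'}$; in the pseudo-Levi case one uses that a base of $\Phi_{I'}$ can be taken inside $\Delta\cup\{\gamma\}$). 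A second delicate point is that Jordan equivalence is defined via $C_G(g_s)^{\circ}$ and conjugacy of unipotent parts \emph{within} this connected centralizer, which is precisely what Theorem \ref{good_rep_u} controls; I would emphasize that it is this connected centralizer — not $C_G(g_s)$ — that appears, so that the Bala--Carter--Pommerening correspondence applies cleanly to the reductive group $L_{I'}$. Once these matching conventions are pinned down, both well-definedness and surjectivity follow directly from the cited propositions of McNinch--Sommers and Richardson together with Theorems \ref{good_rep_ss} and \ref{good_rep_u}.
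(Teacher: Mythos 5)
Your argument is correct and follows essentially the same route as the paper's proof: well-definedness via the observation that the semisimple parts share the connected centralizer $L_{I'}$ while Theorem \ref{good_rep_u} pins down a single $L_{I'}$-class of Richardson elements of $P_{K}^{[L_J,L_J]}$, and surjectivity by combining Theorem \ref{good_rep_ss}, the fact that $g_u\in C_G(g_s)^{\circ}$ in good characteristic, and Theorem \ref{good_rep_u}. The one step you make more explicit than the paper --- that two semisimple elements with $C_G(s)^{\circ}=C_G(s')^{\circ}=L_{I'}$ satisfy $s'\in sZ(L_{I'})^{\circ}$ --- is exactly the point the paper's ``by definition'' glosses over (a priori $s^{-1}s'$ only lies in $Z(L_{I'})$, which need not be connected), so your write-up is no less rigorous than the source on this point.
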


\begin{proof}
  Fix an element $(I,I',J,K)\in \calI$. If $s_1$ and $s_2$ are semisimple elements such that $C_G(s_1)^{\circ}=L_{I'}=C_G(s_2)^{\circ}$ then, by definition, for any $u_1,u_2\in L_{I'}$, the elements $s_1u_1$ and $s_2u_2$ are in the same Jordan class if and only if $u_1$ and $u_2$ are in the same $L_{I'}$-conjugacy class. This is the case since both $u_1$ and $u_2$ are Richardson elements of $P_{K}^{[L_{J},L_{J}]}$. Hence $\phi$ is well defined.

  Let $g$ be a representative of some Jordan class. We may assume that $g_s$ lies in $T$. By Theorem \ref{good_rep_ss} there is a pair $(I,I')$ of pseudo-Levi type such that $C_G(g_s)^{\circ}=L_{I'}$. By \cite[III \S1.14]{Springer_Steinberg}, since we assume that the characteristic of $F$ is good for $G$ we have $g_u\in L_{I'}$. By Theorem \ref{good_rep_u}, $g_u$ is a Richardson element of some distinguished parabolic subgroup $P_{K}^{[L_{J},L_{J}]}$. Hence $\phi$ is surjective.
  \end{proof}

\subsection{Unipotent factorization of $G$}

The following result is due to Vavilov, Smolensky and Sury \cite{VSS2011unitriangular}. A proof can also be obtained with the method used in \cite{UUUU} for finite groups of Lie type. We refer to \cite{UUUU} and the references therein for more background on this result.

\begin{thm}
  \label{thm_UUUU}
  Let $G$ be a simple algebraic group, $B\subseteq G$ a Borel subgroup with maximal torus $T$ and unipotent radical $U$. We have 
  $U\cdot U^{\dot w_0}\cdot U\cdot U^{\dot w_0}=G$
where $\dot w_0$ is a representative in $N_G(T)$ of the longest element of the Weyl group $N_G(T)/T$.
\end{thm}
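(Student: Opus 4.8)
The plan is to combine the density of the big cell with a covering argument, pushing the genuinely hard point down to rank one. Throughout, recall that the big cell $\Omega = U^- T U$ is open and dense in $G$ by the Bruhat decomposition, and that $U^- = U^{\dot w_0}$. I will use repeatedly the elementary fact that in a connected (hence irreducible) algebraic group the product of two dense open subsets is all of $G$: if $\Omega_1,\Omega_2$ are dense open and $g\in G$, then $\Omega_1$ and $g\Omega_2^{-1}$ are both dense open, so they meet in $G$, and any point of the intersection exhibits $g\in\Omega_1\Omega_2$.

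First I would show that the three-fold product $U\cdot U^-\cdot U$ is dense in $G$, i.e. contains a dense open subset $V$. The multiplication map $\mu\colon U\times U^-\times U\to G$ is dominant: at a point $(1,v,1)$ with $v\in U^-$ a regular unipotent element, the image of $d\mu$ in the left-invariant trivialization is $\mathfrak n^+ + \mathfrak n^- + \operatorname{Ad}(v^{-1})\mathfrak n^+$, and since the Cartan components of $\operatorname{Ad}(v^{-1})\mathfrak n^+$ span $\mathfrak h$ (because $[e_\alpha,e_{-\alpha}]=h_\alpha$ and $v$ is regular), this image is all of $\mathfrak g$. Hence $\mu$ is a submersion somewhere and its image contains a dense open $V$. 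In bad characteristic this differential computation must be replaced by a dimension-of-closure estimate, but density of $U U^- U$ holds in every characteristic. Since $(UU^-U)^{-1}=UU^-U$, I may take $V=V^{-1}$, and already this gives a five-fold factorisation as a warm-up: $VV=G$ yields $U U^- U\cdot U U^- U = U U^- U U^- U = G$.

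The crux is to remove one factor and reach the sharp product $U U^- U U^-$. Writing $UU^-UU^- = (UU^-U)\cdot U^-\supseteq V\cdot U^-$ and using $(U^-)^{-1}=U^-$, one has $g\in V U^-$ if and only if the coset $gU^-$ meets $V$; so it suffices to show that every right coset $gU^-$ meets $V$, equivalently that the projection $\pi\colon G\to G/U^-$ is surjective on $V$. This is precisely the delicate point, since a dense constructible image need not be everything, and it is the main obstacle: the five-fold argument above is easy exactly because it sidesteps it. I would settle it by reduction to rank one. Using the Bruhat/Birkhoff decomposition to place an arbitrary $g$ in a position governed by a single root subgroup $G_\alpha\cong \operatorname{SL}_2$ or $\operatorname{PGL}_2$, the explicit Gauss decomposition shows that the controlling coordinate along $gU^-$ is a non-constant (indeed, nowhere-vanishing for the relevant parameter) affine function of the $U^-$-parameter, so the coset is forced into the open cell. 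This is the mechanism behind the Vavilov--Smolensky--Sury theorem and the finite-group method cited in the text; doing it uniformly over all root subsystems, while keeping the count at four factors rather than letting it drift to five, is where the real work lies.

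Finally I would assemble the pieces: once every coset $gU^-$ is known to meet $V\subseteq U U^- U$, we obtain $V U^-=G$, hence $U U^- U U^-=G$. Conjugating by $\dot w_0$, which interchanges $U$ and $U^-$, transports this to $U^- U U^- U=G$ and so confirms that neither ordering of the four alternating factors is privileged.
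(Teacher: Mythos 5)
Your outline is sensible and the easy half is correct: $UU^{-}U$ is dense in $G$ (your differential computation at $(1,v,1)$ is the standard one, and density does hold in all characteristics), and the five-factor identity $UU^{-}UU^{-}U=G$ follows at once from the fact that a dense open subset $V=V^{-1}$ of a connected algebraic group satisfies $VV=G$. But the theorem is the four-factor statement, and you have correctly located the entire difficulty in the claim that every coset $gU^{-}$ meets a dense open $V\subseteq UU^{-}U$ --- and then not proved it. A dense constructible set can perfectly well miss a translate of $U^{-}$, since its complement may contain closed subvarieties of dimension $\dim U^{-}$, so nothing short of an explicit argument will do. Your proposed ``reduction to rank one'' is not an argument as it stands: an arbitrary $g$ lies in some Bruhat cell $B\dot wB$ with $w$ of arbitrary length, and you do not explain which single root subgroup is supposed to govern the position of $gU^{-}$ relative to $UU^{-}U$, nor how the rank-one computation (which does work for $\mathrm{SL}_2$, where $UU^{-}U$ contains the locus where the lower-left entry is nonzero and the $U^{-}$-parameter perturbs exactly that entry) is to be propagated through all positive roots without the factor count drifting back to five. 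That inductive bookkeeping is precisely the content of the Vavilov--Smolensky--Sury theorem, so at the decisive moment your proof reduces the statement to itself.

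For comparison, the paper does not prove Theorem~\ref{thm_UUUU} either: it is quoted from \cite{VSS2011unitriangular}, with the remark that the method of \cite{UUUU} for finite groups of Lie type also applies. Your write-up is a fair account of why the result is plausible and where its difficulty sits, but as a proof it has a genuine gap exactly at the step you yourself flag as ``where the real work lies.''
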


\begin{cor}
	\label{coro_UUU}
	 For any simple algebraic group $G$ we have $\calU^{3}=G$ 
	 where $\calU$ denotes the variety of unipotent elements in $G$.
\end{cor}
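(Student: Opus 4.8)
The plan is to deduce the statement directly from the factorization $G = U\cdot U^{\dot w_0}\cdot U\cdot U^{\dot w_0}$ provided by Theorem \ref{thm_UUUU}. First I would record that both factor groups consist entirely of unipotent elements: $U$ is the unipotent radical of $B$, and $U^{\dot w_0}=U^-$ is the unipotent radical of the opposite Borel subgroup. Hence $U\subseteq\calU$ and $U^{\dot w_0}\subseteq\calU$, and substituting these inclusions into the factorization gives at once $G = U U^{\dot w_0} U U^{\dot w_0}\subseteq\calU^4$, while the reverse inclusion $\calU^4\subseteq G$ is trivial.

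The main obstacle is to sharpen the exponent from $4$ to $3$, that is, to compress the four alternating factors into three unipotent factors. The key fact I would exploit is that $\calU$ is a normal subset of $G$: every $G$-conjugate of a unipotent element is again unipotent, so $g\calU g^{-1}=\calU$ for all $g\in G$. This allows me to absorb a conjugation into $\calU$ without leaving it. Concretely, writing a general element as $g=u_1v_1u_2v_2$ with $u_1,u_2\in U$ and $v_1,v_2\in U^{\dot w_0}$, I would rewrite the first three factors as
$$
u_1v_1u_2=(u_1v_1u_1^{-1})(u_1u_2).
$$
Here $u_1v_1u_1^{-1}$ is a conjugate of the unipotent element $v_1$, hence lies in $\calU$, and $u_1u_2\in U\subseteq\calU$; thus $u_1v_1u_2\in\calU^2$, i.e.\ $U\cdot U^{\dot w_0}\cdot U\subseteq\calU^2$.

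Finally I would combine these steps: since $v_2\in U^{\dot w_0}\subseteq\calU$, the displayed identity yields $g=(u_1v_1u_1^{-1})(u_1u_2)v_2\in\calU^3$. As $g\in G$ was arbitrary, this gives $G\subseteq\calU^3$, and the opposite inclusion is immediate, so $\calU^3=G$. I expect the only delicate points to be the bookkeeping around conjugation-invariance of $\calU$ and the precise grouping of factors; the argument requires no assumption on the characteristic and no case analysis by type, so it applies uniformly to every simple algebraic group $G$.
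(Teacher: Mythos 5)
Your proof is correct and is essentially the paper's own argument: the paper likewise starts from $G=U\cdot U^{\dot w_0}\cdot U\cdot U^{\dot w_0}$ and compresses four factors into three by absorbing one conjugation into the conjugation-invariant set $\calU$ (it conjugates the middle $U$-factor by the adjacent $U^{\dot w_0}$-factor rather than the other way round, a trivial symmetry). No gaps.
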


\begin{proof}
	We have $G=U\cdot U^{\dot w_0}\cdot U\cdot U^{\dot w_0}\subseteq U^{U^{\dot w_0}}\cdot U^{U^{\dot w_0}}\cdot U^{\dot w_0}\subseteq \calU^{3}$ by Theorem \ref{thm_UUUU}.
\end{proof}

\section{Marked diagrams}
\label{marked_diagrams}

The \emph{marked diagram} of an element $g\in B$ is the Dynkin diagram of $G$ where we mark the nodes corresponding to the simple roots $\alpha$ for which the projection of $g$ on the root subgroup $U_{\alpha}$ is not $1$. More precisely, any $g\in B$ has a factorization of the form $s\prod_{\alpha\in\Phi^{+}}u_{\alpha}(x_{\alpha})$ with $s\in T$ and $x_{\alpha}\in F$, and we mark the nodes corresponding to those $\alpha\in\Delta$ for which $x_{\alpha}\neq 0$. We denote this diagram with $\calD(g)$ and write $\calD^{\circ}$ for the marked diagram having all nodes marked. We denote by $\supp(D)$ the set of simple roots corresponding to marked nodes of a diagram $D$. Notice that for $s\in T$ and $u\in U$, we have $\supp(\calD(su))=\supp(u)$. In what follows we mention a few examples of marked diagrams.
  
\par\noindent
\textbf{Unipotent elements.} With the obvious choice of $T$ and $B$, the unipotent element of $\SL_5(F)$
    $$
    \begin{bmatrix}
      1 & 1 & 0  & 0  & 0  \\
      0  & 1 & 1 & 0  & 0 \\
      0 & 0 & 1 & 0  & 0 \\
      0 & 0 & 0 &  1  & 1\\
      0 & 0 & 0  &  0  & 1 \\
    \end{bmatrix}
    \quad\text{has marked diagram}\quad
\begin{tikzpicture}
  \draw (0,0) -- (3,0);
  \fill (0,0) circle (3pt);
  \fill (1,0) circle (3pt);
  \fill[white] (2,0) circle (3pt);
  \draw (2,0) circle (3pt);
  \fill (3,0) circle (3pt);
\end{tikzpicture}.
$$
Let $g$ be a unipotent element $g$ of $G=\SL_n(F)$ in Jordan form, i.e. $g=\bigoplus_i J_i$ where the $J_i$ are Jordan blocks. Again, with the obvious choice of $T$ and $B$, the number of connected components of $\calD(g)$, obtained after removing the non-marked nodes, equals the number of Jordan blocks $J_i$ and each connected component $D_i$ corresponds to a Jordan block $J_i$ such that the number of nodes in $D_i$ is one less than the length of $J_i$.

 If $g\in B$ is a distinguished unipotent element, then $\calD(g)$ is obtained from the labeled Dynkin diagram of $G$ by marking the nodes with label `2'. For a list of these diagrams see \cite[\S5.9]{Carter_Finite}.

\par\noindent
\textbf{Semisimple elements.} If $g\in T$, then $g$ is conjugate to an element $g'\in B$ such that $\calD(g')$ has those simple roots $\alpha$ marked for which $\alpha(g')\neq 1$, i.e. $\supp(\calD(g'))=\supp(g)$ (see Proposition \ref{su_reg}).
  
\par\noindent
\textbf{Regular elements.} A regular element is conjugate to an element which has fully marked diagrams (this follows from Proposition \ref{su_reg}).


\bigskip

 The \emph{marked diagrams} of a normal subset $N$ of $G$ are the marked diagrams of elements in $N\cap B$ and we denote this set by $\calD(N)$. Note that $\calD(N)$ is the union of $\calD(C)$ where $C$ runs over the conjugacy classes in $N$. Notice also that, by Corollary \ref{classification}, any conjugacy class in the Jordan class $\phi(I,I',J,K)$ has a marked diagram $D$ with $\supp(D)=(J-K)\cap\Delta$. In Proposition \ref{diags_jordan_classes} we show that any conjugacy class in the Jordan class $\phi(I,I',J,K)$ has a marked diagram $D$ with $\supp(D)=(I-\Delta)\cup ((J-K)\cap\Delta)$.

 \subsection{Monoids corresponding to marked diagrams}
\label{monoids}
For an abelian monoid $M$ we use additive notation and we denote by $M\Delta$ the free monoid $M^{|\Delta|}$ with basis $\Delta$. We may view marked diagrams as elements of $M\Delta$ by identifying the basis element $\alpha$ with the marked diagram having only the node corresponding to $\alpha$ marked. We call the elements of $M\Delta$ marked diagrams.

 Denote by $\NN_{\OR}=(\{0,1\},\boxplus)$ the monoid with two elements where the operation is bitwise OR. With $\NN_{\OR}\Delta$ one can keep track of whether nodes were marked. Example:
$$
 \begin{tikzpicture}
   \draw (0,0) -- (3,0);
  \fill[white] (0,0) circle (3pt);
  \draw (0,0) circle (3pt);
  \fill (1,0) circle (3pt);
  \fill[white] (2,0) circle (3pt);
  \draw (2,0) circle (3pt);
  \fill (3,0) circle (3pt);
 \end{tikzpicture}
 \quad\boxplus\quad
  \begin{tikzpicture}
  \draw (0,0) -- (3,0);
  \fill (0,0) circle (3pt);
  \fill (1,0) circle (3pt);
  \fill[white] (2,0) circle (3pt);
  \draw (2,0) circle (3pt);
  \fill[white] (3,0) circle (3pt);
  \draw (3,0) circle (3pt);
  \end{tikzpicture}
  \quad=\quad
   \begin{tikzpicture}
   \draw (0,0) -- (3,0);
  \fill (0,0) circle (3pt);
  \fill (1,0) circle (3pt);
  \fill[white] (2,0) circle (3pt);
  \draw (2,0) circle (3pt);
  \fill (3,0) circle (3pt);
 \end{tikzpicture}.
   $$

    With $\NN\Delta$ we take into account multiplicities of marked nodes. For two diagrams $D_1=\sum_{\alpha\in\Delta}n_{\alpha}\alpha$ and $D_2=\sum_{\alpha\in\Delta}m_{\alpha}\alpha$, the sum is $D_1+D_2=\sum_{\alpha\in\Delta}(n_{\alpha}+m_{\alpha})\alpha$. Example:
 $$
 \begin{tikzpicture}
   \draw (0,0) -- (3,0);
  \fill[white] (0,0) circle (3pt);
  \draw (0,0) circle (3pt) node[label=0] {};
  \fill[white] (1,0) circle (3pt);
  \draw (1,0) circle (3pt) node[label=3] {};
  \fill[white] (2,0) circle (3pt);
  \draw (2,0) circle (3pt) node[label=0] {};
  \fill[white] (3,0) circle (3pt);
  \draw (3,0) circle (3pt) node[label=1] {};
 \end{tikzpicture}
 \quad+\quad
  \begin{tikzpicture}
   \draw (0,0) -- (3,0);
  \fill[white] (0,0) circle (3pt);
  \draw (0,0) circle (3pt) node[label=1] {};
  \fill[white] (1,0) circle (3pt);
  \draw (1,0) circle (3pt) node[label=1] {};
  \fill[white] (2,0) circle (3pt);
  \draw (2,0) circle (3pt) node[label=0] {};
  \fill[white] (3,0) circle (3pt);
  \draw (3,0) circle (3pt) node[label=0] {};
 \end{tikzpicture}
  \quad=\quad
   \begin{tikzpicture}
   \draw (0,0) -- (3,0);
  \fill[white] (0,0) circle (3pt);
  \draw (0,0) circle (3pt) node[label=1] {};
  \fill[white] (1,0) circle (3pt);
  \draw (1,0) circle (3pt) node[label=4] {};
  \fill[white] (2,0) circle (3pt);
  \draw (2,0) circle (3pt) node[label=0] {};
  \fill[white] (3,0) circle (3pt);
  \draw (3,0) circle (3pt) node[label=1] {};
 \end{tikzpicture}.
   $$
   Note that we have a partial ordering on $\NN\Delta$: if $x=\sum n_{\alpha}\Delta_\alpha$ and $y=\sum m_{\alpha}\Delta_\alpha$ then $x\geq y$ if and only if $n_\alpha-m_{\alpha}\geq 0$ for all $\alpha\in\Delta$.

   An element $\sum_{\alpha\in\Delta} n_{\alpha}\Delta_\alpha$ of $M\Delta$ is called \emph{regular} if $n_{\alpha}\neq 0$ for all $\alpha$. The fully marked diagram $\calD^{\circ}$ is regular.

   While the monoid $\NN_{\OR}\Delta$ is better suited for the description of conjugacy classes which appear in a product of conjugacy classes (as in Proposition \ref{sum_diagram_Nor}), the monoid $\NN\Delta$ is used to approximate the number of occurences of a regular conjugacy class in a product of conjugacy classes (as in Theorem \ref{normal_sets_prod_regular}). It is clear from the context which monoid is used. A connection between the two is given by the following lemma.

   \begin{lem}
  \label{monoid_morphism}
  There is a morphism of monoids $\psi:\NN\Delta\rightarrow\NN_{\OR}\Delta$ such that $\psi(D)$ is regular if and only if $D$ is regular.
  \end{lem}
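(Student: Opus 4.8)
The plan is to build $\psi$ coordinatewise from a single monoid morphism $\psi_0\colon\NN\to\NN_{\OR}$, and the only natural candidate is the indicator of non-vanishing: set $\psi_0(0)=0$ and $\psi_0(n)=1$ for every $n\geq 1$. First I would check that $\psi_0$ respects the operations. Writing $+$ for addition in $\NN$ and $\boxplus$ for bitwise OR in $\NN_{\OR}$, one has $\psi_0(n+m)=0$ exactly when $n=m=0$, and otherwise $\psi_0(n+m)=1$; comparing with the four cases of $\psi_0(n)\boxplus\psi_0(m)$ gives equality in each, the only case worth noting being $n,m\geq 1$, where $\psi_0(n+m)=1=1\boxplus 1$. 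Since $\psi_0(0)=0$ is the identity of $\NN_{\OR}$, the map $\psi_0$ is a morphism of monoids.

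Next I would define $\psi$ on $\NN\Delta=\NN^{|\Delta|}$ by applying $\psi_0$ in each coordinate: for $D=\sum_{\alpha\in\Delta}n_\alpha\alpha$ put $\psi(D)=\sum_{\alpha\in\Delta}\psi_0(n_\alpha)\alpha\in\NN_{\OR}\Delta$. Because addition in both monoids is coordinatewise and $\psi_0$ is a morphism on each factor, $\psi$ is automatically a morphism of the product monoids; this step needs no computation beyond the previous paragraph.

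Finally, for the regularity equivalence I would simply unwind the definitions. By definition $D=\sum_\alpha n_\alpha\alpha$ is regular iff $n_\alpha\neq 0$ for all $\alpha$, which is exactly the condition $\psi_0(n_\alpha)=1$ for all $\alpha$; and $\psi(D)$ is regular iff its $\alpha$-coordinate equals $1$ for every $\alpha$, i.e.\ again $\psi_0(n_\alpha)=1$ for all $\alpha$. Hence the two conditions coincide and $\psi(D)$ is regular if and only if $D$ is. I do not expect any genuine obstacle here: the content of the lemma is that non-vanishing in each coordinate is preserved, and the map detecting non-vanishing is essentially forced, so the work is entirely in verifying that $\psi_0$ is a homomorphism. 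As a sanity check, this construction gives $\psi(\calD^{\circ})=\calD^{\circ}$.
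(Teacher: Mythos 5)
Your construction is exactly the one the paper uses: $\psi$ sends $\sum_\alpha n_\alpha\alpha$ to $\sum_\alpha m_\alpha\alpha$ with $m_\alpha=1$ precisely when $n_\alpha\neq 0$, and the paper likewise notes the claims follow directly from the definitions. Your write-up just spells out the coordinatewise verification that the paper leaves implicit; there is nothing to correct.
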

\begin{proof}
  For $D=\sum_{\alpha\in\Delta}n_{\alpha}\alpha\in\NN\Delta$ define $\psi(D)$ to be $\sum_{\alpha\in\Delta}m_{\alpha}\alpha\in\NN_{\OR}\Delta$ with $m_{\alpha}=1$ if and only if $n_{\alpha}\neq 0$. The claims follow directly from the definitions.
  \end{proof}

The following lemma is needed for the proof of Proposition \ref{normal_sets_prod_regular_G}.

 \begin{lem}
  \label{sum_of_diagrams}
  Let $r=|\Delta|$ and let $D_1,\dots,D_k$ be marked diagrams such that $\sum_{i=1}^{k}D_i\geq mr\calD^{\circ}$ for some integer $m\geq 1$. There is a partition of $\{1,\dots,k\}$ into $m$ subsets $I_1,\dots,I_m$ such that $\sum_{i\in I_j}D_i\geq \calD^{\circ}$ for all $j$ with $1\leq j\leq m$.
 \end{lem}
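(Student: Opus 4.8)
The plan is to translate the inequality into a covering condition on the nodes of the diagram and then peel off the blocks one at a time by induction on $m$. The first thing I would make explicit is that the marked diagrams under consideration---those arising from elements of $N_i\cap B$, i.e. the members of $\calD(N_i)$ to which the lemma is applied---have all coefficients in $\{0,1\}$: writing $D_i=\sum_{\alpha\in\Delta}n^{(i)}_{\alpha}\alpha$, each node is simply marked ($n^{(i)}_\alpha=1$) or not ($n^{(i)}_\alpha=0$). Under this identification $D_i$ is determined by its support $S_i=\supp(D_i)\subseteq\Delta$, the relation $\sum_{i\in I}D_i\geq\calD^{\circ}$ says exactly that $\bigcup_{i\in I}S_i=\Delta$, and the hypothesis $\sum_{i=1}^{k}D_i\geq mr\calD^{\circ}$ becomes the statement that every node $\alpha\in\Delta$ is marked by at least $mr$ of the diagrams.

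I would then argue by induction on $m$. For $m=1$ one simply takes $I_1=\{1,\dots,k\}$; since $r\geq 1$ the hypothesis already gives $\sum_{i}D_i\geq r\calD^{\circ}\geq\calD^{\circ}$. For the step $m\geq 2$ the key move is to extract one covering block as cheaply as possible: for each $\alpha\in\Delta$ choose an index $i_\alpha$ with $n^{(i_\alpha)}_\alpha=1$ (possible because $\alpha$ is covered at least $mr\geq 1$ times) and set $I_m=\{i_\alpha:\alpha\in\Delta\}$. This uses at most $r$ indices and satisfies $\sum_{i\in I_m}D_i\geq\calD^{\circ}$. Deleting $I_m$ lowers the coverage of any fixed node $\beta$ by $\sum_{i\in I_m}n^{(i)}_\beta\leq|I_m|\leq r$, so after the deletion every node is still covered at least $mr-r=(m-1)r$ times; that is, $\sum_{i\notin I_m}D_i\geq(m-1)r\calD^{\circ}$. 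The induction hypothesis applied to $\{D_i:i\notin I_m\}$ produces a partition of $\{1,\dots,k\}\setminus I_m$ into $I_1,\dots,I_{m-1}$ with $\sum_{i\in I_j}D_i\geq\calD^{\circ}$, and adjoining $I_m$ gives the required partition of $\{1,\dots,k\}$.

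The only delicate point---and the reason the bound $mr$ appears---is the estimate that removing a minimal cover costs each node at most $r$ units of coverage. This is precisely where the binary nature of the diagrams is indispensable: if a single diagram were allowed to carry a large coefficient at one node, deleting it could wipe out that node's entire surplus and the induction would collapse. Indeed the statement fails for arbitrary coefficients in $\NN\Delta$ (take $r=1$, $m=2$, $k=1$ and $D_1=2\alpha$), so I would be careful to invoke the hypothesis $n^{(i)}_\alpha\in\{0,1\}$ exactly at this coverage-loss estimate. Everything else is routine bookkeeping.
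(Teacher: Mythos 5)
Your proof is correct and follows essentially the same strategy as the paper's: peel off a covering subset of at most $r$ indices, observe that this costs each node at most $r$ units of coverage (which, as you rightly stress, requires the coefficients to be binary, i.e. $D_i\leq\calD^{\circ}$ --- a fact the paper's proof also uses, implicitly, in the step $\sum_{i\in I}D_i\leq r\calD^{\circ}$), and induct on $m$. The only cosmetic difference is that you construct the small cover directly by picking one index per node, whereas the paper takes a minimal covering subset $I$ and bounds $|I|\leq r$ via an injection $j\mapsto\alpha_j$ into $\Delta$.
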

 \begin{proof}
   We prove the lemma by induction on $m$. The case $m=1$ is trivial. Assume that $m\geq 2$. Since $\sum_{i=1}^{k}D_i\geq mr\calD^{\circ}$, there is a subset $I$ of $\{1,2,\dots,k\}$ such that $\sum_{i\in I}D_i\geq \calD^{\circ}$. Renumbering the diagrams, we may assume that $I=\{1,2,\dots,k_0\}$ where $k_0=|I|$. Let $I$ be minimal with this property, i.e. for any $j\in I$, $S_{j}:=\sum_{i\in I-\{j\}}D_i\ngeq \calD^{\circ}$. Since $\calD^{\circ}=\sum_{\alpha\in\Delta}\alpha$, for every $j\in I$ there is at least one $\alpha_j\in \Delta$ such that the coefficient of $\alpha_j$ in $S_j$ is zero. In other words, $D_j$ is the only diagram in $\{D_i\}_{i\in I}$ which has the node corresponding to $\alpha_j$ marked. This gives an injective map $I\rightarrow \Delta:j\mapsto\alpha_j$, hence $|I|\leq|\Delta|=r$. It follows that $\sum_{i\in I}D_i\leq r\calD^{\circ}$ and that $\sum_{i=k_0+1}^{k}D_i\geq (m-1)r\calD^{\circ}$. The claim follows by induction.
 \end{proof}
 
 \section{Marked diagrams and unipotent elements}
 \label{marked_diagrams_unipotent}
 
 \subsection{A map from marked diagrams to $U$}
 \label{map_to_U}
Marked diagrams attach combinatorial data to conjugacy classes by means of elements in $B$. For the reverse direction we consider the map from marked diagrams to unipotent elements given by
  \begin{equation}
    \label{ureg_map}
    u:M\Delta\rightarrow U
    ,\quad u(D):=\prod_{\alpha\in \supp(D)}u_{\alpha}(1).
  \end{equation}
It is easy to see that $\calD(u(D))=D$ for any marked diagram $D$.

\begin{lem}
  \label{regular_1}
  Let $\Delta'$ be a subset of $\Delta$. If $u\in U$ is such that $\supp(u)=\Delta'$ then there is a $T$-conjugate of $u$ in $\left(\prod_{\alpha\in\Delta'}u_{\alpha}(1)\right)[U,U]$.
\end{lem}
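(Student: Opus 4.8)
The plan is to normalize the simple root coordinates of $u$ to $1$ by conjugating with a well-chosen element of $T$, and then to compare the result with $\prod_{\alpha\in\Delta'}u_{\alpha}(1)$ inside the abelianization $U/[U,U]$. Write $u=\prod_{\alpha\in\Phi^{+}}u_{\alpha}(x_{\alpha})$ with respect to the fixed ordering of $\Phi^{+}$; by the hypothesis $\supp(u)=\Delta'$ we have $x_{\alpha}\neq 0$ for $\alpha\in\Delta'$ and $x_{\alpha}=0$ for $\alpha\in\Delta\setminus\Delta'$ among the simple roots. By \eqref{cochar_action}, conjugation by any $s\in T$ acts as $u_{\beta}(x)\mapsto u_{\beta}(\beta(s)x)$ on each root subgroup, so it respects the factorization and ${}^{s}u=\prod_{\alpha\in\Phi^{+}}u_{\alpha}(\alpha(s)x_{\alpha})$.

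First I would produce $s\in T$ realizing $\alpha(s)=x_{\alpha}^{-1}$ for all $\alpha\in\Delta'$. The simple roots are linearly independent elements of the character group $X(T)$ of $T$, so the inclusion $\mathbb{Z}\Delta'\hookrightarrow X(T)$ is injective; applying $\operatorname{Hom}_{\mathbb{Z}}(-,F^{\times})$, which is exact since the divisible group $F^{\times}$ is an injective $\mathbb{Z}$-module, shows that $T=\operatorname{Hom}_{\mathbb{Z}}(X(T),F^{\times})\to (F^{\times})^{\Delta'}$, $s\mapsto(\alpha(s))_{\alpha\in\Delta'}$, is surjective. Hence the prescribed values $x_{\alpha}^{-1}$ are attained by some $s$.

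For such an $s$, the element ${}^{s}u=\prod_{\alpha\in\Phi^{+}}u_{\alpha}(\alpha(s)x_{\alpha})$ has simple root coordinate $u_{\alpha}(1)$ for $\alpha\in\Delta'$ and $1$ for $\alpha\in\Delta\setminus\Delta'$. Put $w=\prod_{\alpha\in\Delta'}u_{\alpha}(1)$. Granting the equality $[U,U]=\prod_{\alpha\in\Phi^{+}\setminus\Delta}U_{\alpha}$ (see below), the abelian quotient $U/[U,U]$ is the direct sum of the images of the simple root subgroups, and the image there of any $\prod_{\alpha}u_{\alpha}(y_{\alpha})$ depends only on its simple root coordinates. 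These coincide for ${}^{s}u$ and $w$, whence $w^{-1}\cdot{}^{s}u\in[U,U]$ and ${}^{s}u\in w[U,U]$, as desired.

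The hard part will be this last reduction: the paper records only the containment $[U,U]\subseteq\prod_{\alpha\in\Phi^{+}\setminus\Delta}U_{\alpha}$, whereas the coset statement genuinely needs the equality, i.e. that every non-simple positive root subgroup lies in $[U,U]$. I would establish this by induction on height via the Chevalley commutator formula, writing a root of height $\geq 2$ as $\alpha+\gamma$ with $\alpha$ simple and checking that the structure constant producing $U_{\alpha+\gamma}$ is a unit in $F$ in every characteristic (for instance the coefficient of $u_{2\alpha+\beta}$ in $[u_{\alpha}(x),u_{\beta}(y)]$ is $\pm 1$, the apparent factor $2$ cancelling the $\tfrac12$ in the formula). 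The surjectivity of the second step is comparatively routine once the divisibility of $F^{\times}$ is invoked, which is exactly what lets the argument ignore the isogeny type of $T$.
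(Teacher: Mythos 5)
Your first step is sound and in fact more explicit than the paper's own argument: since the simple roots in $\Delta'$ are linearly independent in $X(T)$ and $F^{\times}$ is divisible, hence an injective $\mathbb{Z}$-module, the map $T\rightarrow (F^{\times})^{\Delta'}$, $s\mapsto(\alpha(s))_{\alpha\in\Delta'}$, is surjective and you may normalize the simple-root coordinates to $1$ by a $T$-conjugation. You also correctly identify the crux, namely that your reduction to $U/[U,U]$ requires the equality $[U,U]=\prod_{\alpha\in\Phi^{+}\setminus\Delta}U_{\alpha}$ and not merely the containment the paper records. (The paper takes a different route: it reduces to $\Delta'=\Delta$ inside the Levi $L_{\Delta'}$ and observes that the $T$-orbits of the two regular elements $\bar u$ and $\prod_{\alpha\in\Delta}u_{\alpha}(1)$, multiplied by $[U,U]$, are open dense in $U$ and therefore meet; this replaces your explicit normalization by a dimension count.)

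The gap is in your proposed proof of that equality. The height induction you describe --- write a non-simple positive root as $\alpha+\gamma$ with $\alpha$ simple and use that the structure constant of $U_{\alpha+\gamma}$ in $[u_{\alpha}(x),u_{\gamma}(y)]$ is a unit --- fails for non-simply-laced types in bad characteristic. Concretely, in type $C_2$ with $\alpha$ short and $\beta$ long, the root $2\alpha+\beta$ admits the \emph{unique} decomposition $\alpha+(\alpha+\beta)$ into a simple plus a positive root, and $N_{\alpha,\alpha+\beta}=\pm 2$, which vanishes in characteristic $2$; thus $[U_{\alpha},U_{\alpha+\beta}]=1$ and the inductive step produces nothing. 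Likewise $N_{\beta,\alpha+2\beta}=\pm 3$ kills the step for $\alpha+3\beta$ in $G_2$ in characteristic $3$. The $\pm 1$ you quote is the quadratic coefficient $C_{21}$ of $u_{2\alpha+\beta}$ in the full Chevalley commutator $[u_{\alpha}(x),u_{\beta}(y)]$ of two \emph{simple} root elements, not the constant $N_{\alpha,\alpha+\beta}$ that your induction actually invokes, so your example does not support the step. The equality itself can likely be rescued --- the extremal coefficients $C_{ij}$ in $[u_{\alpha}(x),u_{\beta}(y)]=\prod u_{i\alpha+j\beta}\bigl(C_{ij}(-x)^{i}y^{j}\bigr)$ are $\pm 1$, and in the $C_2$, characteristic $2$ case the elements $u_{\alpha+\beta}(\pm xy)\,u_{2\alpha+\beta}(\pm x^{2}y)$ do generate $U_{\alpha+\beta}U_{2\alpha+\beta}$ after taking suitable products --- but that is a genuinely different, case-dependent argument from the one you sketch, and it is precisely the piece you would still need to supply.
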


\begin{proof}
  Let $u=\prod_{\alpha\in\Phi^{+}}u_{\alpha}(x_{\alpha})$ with $x_{\alpha}\in F$. Consider the parabolic subgroup $P_{\Delta'}$ with Levi factor $L_{\Delta'}$ and unipotent radical $Q$. It follows that $u=\check{u}\tilde{u}$ with $\check{u}\in [L_{\Delta'},L_{\Delta'}]$ and $\tilde{u}\in Q$. For a simple root $\alpha$ we have $x_{\alpha}\neq 0$ if and only if $\alpha\in\Delta'$. Thus $\tilde u\in Q\cap [U,U]$. Since $T$ contains a maximal torus $T'$ of $[L_{\Delta'},L_{\Delta'}]$ which normalizes $Q\cap [U,U]$, it suffices to show that $\check u$ is $T'$-conjugate to an element in $\left(\prod_{\alpha\in\Delta'}u_{\alpha}(1)\right)[U',U']$ where $U'=U\cap L_{\Delta'}$.
  
  The subgroup $[L_{\Delta'},L_{\Delta'}]$ is a semisimple algebraic group, i.e. a central product of simple algebraic groups. Its maximal torus $T'$ contains a maximal torus of each simple factor, hence we may assume that $[L_{\Delta'},L_{\Delta'}]$ is a simple algebraic group. In other words we may assume that $\Delta=\Delta'$ and $T=T'$.
  
  Let $\bar u$ be the projection of $u$ on $\prod_{\alpha\in\Delta}U_{\alpha}$ and denote by $v$ the element $\prod_{\alpha\in\Delta}u_{\alpha}(1)$. By \cite[\S 3.7 Theorem 1]{Steinberg_classes}, the elements $\bar u$ and $v$ are regular. Since $\dim C_T(\bar u)=0$ the orbit $\bar u^{T}\subseteq \prod_{\alpha\in\Delta}U_{\alpha}$ has dimension $\dim T=\dim\prod_{\alpha\in\Delta}U_{\alpha}$, hence it is open in $\prod_{\alpha\in\Delta}U_{\alpha}$. Similarly for $v$. Since $\left(\prod_{\alpha\in\Delta}U_{\alpha}\right)\times\left(\prod_{\alpha\in\Phi-\Delta}U_{\alpha}\right)$ is a factorization of $U$ into algebraic sets it follows that $\bar u^{T}[U,U]=(\bar u[U,U])^{T}=(u[U,U])^{T}$ and $v^{T}[U,U]=(v[U,U])^{T}$ are open in $U$. Hence the two sets have a non-trivial intersection and the claim follows.
  \end{proof}

\begin{prop}
  \label{su_reg}
  Let $g\in B$ with $g_s\in T$ and let $\Delta_s\subseteq \supp(g_s)$. There is an element $\tilde u\in [U,U]$ such that $g$ is conjugate by an element of $B$ to $g_su(D)\tilde u$ where $D$ is the diagram with $\supp(D)=\Delta_s\cup\supp(g_u)$.
\end{prop}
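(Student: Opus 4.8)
The plan is to produce the desired representative by a two-stage conjugation inside $B$: first a torus conjugation normalizing the projection of $g_u$ onto the simple root groups, and then a sequence of conjugations by elements of the root groups $U_\alpha$ with $\alpha\in\Delta_s$, which switch on exactly the simple nodes coming from the semisimple support. Throughout I would work modulo $[U,U]$, using that the projection $\pi\colon U\to U/[U,U]\cong\prod_{\alpha\in\Delta}U_\alpha$ is a homomorphism onto an abelian group and that $[U,U]\subseteq\prod_{\alpha\in\Phi^{+}-\Delta}U_\alpha$, so that $\pi$ records precisely the simple-root coefficients.

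First I would write $g=g_su$ with $g_s\in T$ and $u=g_u=g_s^{-1}g\in U$ (the unipotent part of an element of $B$ whose semisimple part lies in $T$ again lies in $U$). Applying Lemma \ref{regular_1} to $u$ with $\Delta'=\supp(u)$ yields $t\in T$ with ${}^{t}u\in\big(\prod_{\alpha\in\supp(u)}u_\alpha(1)\big)[U,U]$; since $T$ is abelian, conjugating $g$ by $t$ fixes $g_s$ and replaces $g$ by $g_s\cdot{}^{t}u$. Thus after this step I may assume that the simple-root coefficients of the unipotent part are exactly $1$ on $\supp(g_u)$ and $0$ elsewhere.

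The key step is to switch on the remaining nodes of $\Delta_s\setminus\supp(g_u)$ one at a time by conjugating by suitable $u_\alpha(x)$. For $\alpha\in\Delta_s$ and a current representative $g_sv$ with $v\in U$, a direct computation using \eqref{cochar_action} gives
\[
{}^{u_\alpha(x)}(g_sv)=g_s\,u_\alpha\!\big(\alpha(g_s)^{-1}x\big)\,v\,u_\alpha(-x),
\]
so the new representative again has the shape $g_s\cdot(\text{element of }U)$, and under $\pi$ its $\alpha$-coefficient is the old one increased by $\big(\alpha(g_s)^{-1}-1\big)x$, while every other simple-root coefficient is unchanged. Because $\alpha\in\Delta_s\subseteq\supp(g_s)$ forces $\alpha(g_s)\neq1$, the factor $\alpha(g_s)^{-1}-1$ is nonzero, so a unique choice of $x$ sets the $\alpha$-coefficient to $1$. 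As distinct root conjugations affect disjoint coefficients, I can carry this out successively over all $\alpha\in\Delta_s\setminus\supp(g_u)$ without disturbing the coefficients already fixed.

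After these conjugations the representative is $g_sv$ with $v\in U$ whose image under $\pi$ equals that of $u(D)=\prod_{\alpha\in\supp(D)}u_\alpha(1)$, where $\supp(D)=\Delta_s\cup\supp(g_u)$; equivalently $v\in u(D)[U,U]$, i.e.\ $v=u(D)\tilde u$ with $\tilde u\in[U,U]$. Since all conjugating elements lie in $B$, this exhibits $g$ as $B$-conjugate to $g_su(D)\tilde u$, as required. I expect the only delicate point to be the bookkeeping in the displayed identity: one must verify that conjugation by $u_\alpha(x)$ alters only the $\alpha$-coefficient modulo $[U,U]$ and by exactly the stated amount, which is precisely where the hypothesis $\alpha(g_s)\neq1$ and the containment $[U,U]\subseteq\prod_{\alpha\in\Phi^{+}-\Delta}U_\alpha$ both enter.
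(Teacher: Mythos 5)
Your argument is correct, but it takes a genuinely different route from the paper's. Both proofs begin identically, invoking Lemma \ref{regular_1} to $T$-conjugate so that the unipotent part is $u(\calD(g_u))$ modulo $[U,U]$. From there the paper multiplies $g$ on the right by $\check u=u(D_s)$, appeals to \cite[\S2.4]{Humphreys_conjugacy_classes} to conjugate $g\check u$ by some $v\in U$ into the form $g_su'$ with $u'\in C_U(g_s)$, and then uses the variety factorization $U\cong C_U(g_s)\times Q$ to show that the discrepancy $g_u^{-1}u'$ lands in $[U,U]$, finally unwinding the conjugations to exhibit $\tilde u$. You instead turn on the nodes of $\Delta_s$ one at a time by explicit rank-one conjugations, using the identity ${}^{u_\alpha(x)}(g_sv)=g_s\,u_\alpha(\alpha(g_s)^{-1}x)\,v\,u_\alpha(-x)$ and solving the linear equation in $x$ made possible by $\alpha(g_s)\neq 1$; the bookkeeping modulo $[U,U]$ via additivity of simple-root coordinates is sound, and successive conjugations for distinct $\alpha\in\Delta_s$ indeed do not interfere. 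Your version is more elementary and self-contained (no appeal to the centralizer-conjugation result or the product decomposition of $U$), and it isolates precisely where the hypothesis $\Delta_s\subseteq\supp(g_s)$ enters, namely in the invertibility of $\alpha(g_s)^{-1}-1$; the paper's version is less computational at the level of root groups but leans on cited structural facts. One small point worth making explicit in your write-up: since $[g_s,g_u]=1$ forces $g_u\in C_U(g_s)=\prod_{\alpha(g_s)=1}U_\alpha$, the sets $\Delta_s$ and $\supp(g_u)$ are automatically disjoint, so your loop over $\Delta_s\setminus\supp(g_u)$ is really a loop over all of $\Delta_s$ and the target coefficient there starts at $0$.
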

\begin{proof}
  Let $D_s$ be the marked diagram with marked nodes corresponding to the simple roots in $\Delta_s$ and note that $\supp(D)=\Delta_s\cup \supp(g_u)$ is a disjoint union since $[g_s,g_u]=1$.
  
  Conjugating by an element of $T$ we may assume that the projection of $g_u$ on $U_{\alpha}$ is $u_{\alpha}(1)$ for all simple roots $\alpha\in D(g_u)$ (by Lemma \ref{regular_1}), i.e. we may assume that $g_u=u(\calD(g_u))$ modulo $[U,U]$. Thus $u(D)=g_u u(D_s)$ modulo $[U,U]$. Denote $u(D_s)$ by $\check u$ and notice that $g_su(D)=g_sg_u\check u\hat u=g\check u\hat u$ for some element $\hat u\in[U,U]$.

  By \cite[\S2.4]{Humphreys_conjugacy_classes}, the element $g\check u$ is conjugate by an element $v\in U$ to $g_su'$ with $u'\in C_U(g_s)$. The closed connected group $U$ factors as a variety: $U\cong C_U(g_s)\times Q$ where $Q$ is a closed connected subvariety of $U$ and where the isomorphism is given by the product in $G$. Hence $v=v_0v_1$ with $v_0\in C_U(g_s)$ and $v_1\in Q$. It follows that there exists $q\in Q$ such that
  $$
  g_su'=(g\check u)^{v}=(g_s^{v_0v_1})(g_u\check u)^{v}=(g_s^{v_1})(g_u\check u)^{v}
  \in g_sqg_u\check u[U,U].
  $$
Therefore
  $$
  u'\in qg_u\check u[U,U]=g_uq\check u[U,U],
  \quad\text{and thus}\quad
  g_u^{-1}u'\in q\check u[U,U].
  $$
  Since $g_u^{-1}u'\in C_U(g_s)$ the projection of $g_u^{-1}u'$ on $U_{\alpha}$ is $1$ for all $\alpha\in\supp(g_s)$. Hence the projection of $q\check u[U,U]$ on $U_{\alpha}$ is $1$ for all $\alpha\in\supp(g_s)$. In other words $q\check u\in C_U(g_s)$ modulo $[U,U]$. But $q,\check u\in Q$ hence $q\check u\in [U,U]$. It follows that $g_u^{-1}u'\in [U,U]$ and so $\tilde u=\hat u^{-1}((u')^{-1}g_u)^{v^{-1}}\in[U,U]$. Hence
  \begin{align*}
  (g_su(D)\tilde u)^v
  =
  (g\check u((u')^{-1}g_u)^{v^{-1}})^v
  =
  (g\check u)^{v}(u')^{-1}g_u
  =
  g_su'(u')^{-1}g_u
  =
  g
  .
  &\qedhere
  \end{align*}
\end{proof}

\begin{prop}
  \label{diags_jordan_classes0}
  Two conjugacy classes in the same Jordan class have the same sets of marked diagrams.
\end{prop}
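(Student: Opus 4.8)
The plan is to compare the two classes inside the solvable quotient $\overline{B}=B/[U,U]$, where marked diagrams are visible, and to move representatives from one class to the other by right multiplication by a suitable central element of a centralizer of a semisimple element. First I would normalize the data. Let $C_1=g^G$ and $C_2=h^G$ lie in a common Jordan class, with $g_s,h_s\in T$ and $M:=C_G(g_s)^{\circ}=C_G(h_s)^{\circ}$, $h_s\in g_sZ(M)^{\circ}$, and $h_u$ conjugate to $g_u$ in $M$. Writing $h_u=mg_um^{-1}$ with $m\in M$ and replacing $h$ by $m^{-1}hm$ (again a representative of $C_2$), I may assume $h_u=g_u$ and $h_s=g_sz$ with $z\in Z(M)^{\circ}$; note $C_G(g_sz)^{\circ}=M$ by hypothesis and that $gz=g_szg_u$ represents $C_2$. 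I record the following support lemma, used repeatedly: if $t\in T$ and $z_0\in Z(C_G(t)^{\circ})^{\circ}$ satisfy $C_G(tz_0)^{\circ}=C_G(t)^{\circ}$, then $\supp(t)=\supp(tz_0)$, since $z_0$ is trivial on every root of $C_G(t)^{\circ}$ and the equality of centralizers forces exactly the same simple roots of $t$ and of $tz_0$ to be trivial.

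Next I would describe $\calD(C)$ through $\overline{B}$. The marked diagram of $g'\in C\cap B$ only records the projections of $g'$ on the simple root groups, and $U/[U,U]\cong\bigoplus_{\alpha\in\Delta}U_{\alpha}$, so the diagram is read off from the image of $g'$ in $\overline{B}=T\ltimes\bigoplus_{\alpha\in\Delta}U_{\alpha}$. Let $\bar s\in T$ be the image of $g'$ in $T=\overline{B}/\overline{U}$, equivalently the projection of $g'_s$ to $T$ (a Weyl conjugate of $g_s$). Computing the Jordan decomposition and the conjugacy classes in this solvable group shows that the marked nodes of $g'$ among $\{\alpha:\alpha(\bar s)=1\}$ are exactly $\supp(g'_u)$, whereas among $\{\alpha:\alpha(\bar s)\neq 1\}=\supp(\bar s)$ the marked set is arbitrary: using Proposition \ref{su_reg} (after $U$-conjugating $g'$ so that $g'_s\in T$, which does not change $\supp(g'_u)$), every diagram with support $\supp(g'_u)\sqcup S$, $S\subseteq\supp(\bar s)$, lies in $\calD(C)$, and conversely every diagram of $C$ arises this way for some $g'\in C\cap B$.

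Finally I would transport diagrams. Let $D\in\calD(C_1)$ be realized by $g'=xgx^{-1}\in C_1\cap B$, and set $c:=xzx^{-1}$. Since $z\in Z(M)^{\circ}$ commutes with $g$, the element $c$ is semisimple, commutes with $g'$, and lies in $Z(C_G(g'_s)^{\circ})^{\circ}$; as this connected centralizer meets $B$ in a Borel subgroup containing its central torus, $c\in B$ and hence $g'c=x(gz)x^{-1}\in C_2\cap B$. From $(g'c)_s=g'_sc$ and $(g'c)_u=g'_u$ we get $\supp((g'c)_u)=\supp(g'_u)$, while the image of $g'c$ in $T$ is $\bar s z_0$ with $z_0\in Z(C_G(\bar s)^{\circ})^{\circ}$ and $C_G(\bar s z_0)^{\circ}=C_G(\bar s)^{\circ}$; the support lemma then gives $\supp(\bar s z_0)=\supp(\bar s)$. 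Applying the description of the previous paragraph to $g'c$ now places $D$, whose support is $\supp(g'_u)\sqcup S$ with $S\subseteq\supp(\bar s)=\supp(\bar s z_0)$, in $\calD(C_2)$. Exchanging $C_1$ and $C_2$ (replacing $z$ by $z^{-1}$) yields the opposite inclusion, so $\calD(C_1)=\calD(C_2)$.

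The main obstacle is the description of $\calD(C)$ in the second paragraph: one must verify carefully that the passage to $U/[U,U]$ computes the marked diagram, that in $T\ltimes\bigoplus_{\alpha}U_{\alpha}$ the unipotent part of an element accounts for precisely its marked nodes in the directions fixed by $\bar s$, and that the nodes in the moving directions can be switched on and off by conjugation. Everything else is bookkeeping with centralizers of semisimple elements, governed by the observation that multiplying a semisimple element by a central element of its connected centralizer changes neither that centralizer nor the simple support.
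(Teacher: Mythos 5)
Your proposal is correct and takes essentially the same route as the paper's proof: both arguments rest on Proposition \ref{su_reg} together with the observation that the simple supports of the semisimple and unipotent parts (hence the decomposition of a marked diagram into $\supp(g'_u)$ plus a subset of $\supp(\bar s)$) are invariants of the Jordan class. The only real difference is cosmetic: you read off this decomposition from the Jordan decomposition in $B/[U,U]$ and transport it by multiplying with the central element $xzx^{-1}$, whereas the paper obtains it by conjugating the representative by an element of $U$ into $TC_U(s)$ via \cite[\S2.4]{Humphreys_conjugacy_classes} and then invokes the abstract Jordan-class representative directly.
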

\begin{proof}
  For a conjugacy class $C$ fix a diagram $D\in\calD(C)$ and let $g\in B\cap C$ be such that $D=\calD(g)$. Let $g=su$ be the factorization of $g$ with $s\in T$ and $u\in U$. By \cite[\S2.4]{Humphreys_conjugacy_classes} we may conjugate $g$ by an element of $U$ to obtain $\tilde g=s\tilde u$ with $\tilde u\in C_G(s)$. Since we conjugated by an element in $U$, $\supp(D)=\Delta_s\cup\supp(\tilde u)$ for some $\Delta_s\subseteq \supp(s)$.

  Any other conjugacy class $C'$ in the same Jordan class is represented by $h=h_sh_u$ with $h_u=\tilde u$ and $h_s\in sZ(C_G(s)^{\circ})^{\circ}$ with $C_G(h_s)^{\circ}=C_G(s)^{\circ}$ (see \S\ref{jordan_classes}). In particular $\supp(h_s)=\supp(s)$ and $\supp(h_u)=\supp(\tilde u)$. By Proposition \ref{su_reg}, $h$ is conjugate to an element $\tilde h$ with $\supp(\calD(\tilde h))=\Delta_s\cup\supp(\tilde u)$, i.e. $C'$ contains the element $\tilde h$ with $\calD(\tilde h)=D$.
\end{proof}

\begin{lem}
  \label{facts_P}
  Let $\Delta'$ be a subset of $\Delta$. Consider the standard parabolic subgroup $P_{\Delta'}=L_{\Delta'}Q$ with Levi factor $L_{\Delta'}$ and unipotent radical $Q$. Any open subset $V$ of $Q$ contains an element $u$ with $\supp(u)=\Delta-\Delta'$.
\end{lem}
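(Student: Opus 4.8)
The plan is to produce an open subset of $Q$ whose elements all have support exactly $\Delta-\Delta'$, and then to conclude by irreducibility of $Q$ that this subset must meet the given open set $V$. First I would record which simple roots can appear in an element of $Q$. Since $Q=\prod_{\alpha\in\Phi^{+}\setminus\Phi_{\Delta'}}U_{\alpha}$, a simple root $\beta$ contributes a root subgroup to $Q$ precisely when $\beta\notin\Delta'$, because a simple root not in $\Delta'$ does not lie in $\ZZ\Delta'$. Hence $\supp(u)\subseteq\Delta-\Delta'$ for every $u\in Q$, and the task reduces to finding an element of $V$ whose projection on $U_{\beta}$ is nontrivial for each $\beta\in\Delta-\Delta'$.

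Next I would set $W=\{u\in Q:\supp(u)=\Delta-\Delta'\}$ and argue that $W$ is open and nonempty. Writing $u\in Q$ in a fixed ordering as $\prod_{\alpha\in\Phi^{+}\setminus\Phi_{\Delta'}}u_{\alpha}(x_{\alpha})$, the coordinate $x_{\beta}$ for a simple root $\beta$ is a regular function on $Q$, and as recalled in Section \ref{notation_background} the projections on simple root groups are independent of the chosen ordering by the commutator relations. Thus for each $\beta\in\Delta-\Delta'$ the condition $x_{\beta}\neq 0$ defines an open subset of $Q$, and $W$ is the intersection of these finitely many open sets, hence open. It is nonempty because the element $u(D)=\prod_{\alpha\in\Delta-\Delta'}u_{\alpha}(1)$, where $D$ is the diagram with $\supp(D)=\Delta-\Delta'$ as in \eqref{ureg_map}, lies in $Q$ and satisfies $\supp(u(D))=\Delta-\Delta'$.

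Finally, since $Q$ is a connected unipotent group it is irreducible; indeed the product map gives an isomorphism of varieties from affine space $\prod_{\alpha\in\Phi^{+}\setminus\Phi_{\Delta'}}U_{\alpha}$ onto $Q$. In an irreducible variety any two nonempty open subsets intersect, so $V\cap W\neq\emptyset$, and any element of this intersection lies in $V$ and has support exactly $\Delta-\Delta'$, as required. The only step demanding any care is the openness of $W$, which rests on the projection of $Q$ onto each simple root group being a well-defined morphism independent of ordering; once this is granted the argument is formal.
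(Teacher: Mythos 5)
Your proof is correct and follows essentially the same route as the paper: both produce a nonempty open subset of $Q$ all of whose elements have support $\Delta-\Delta'$ (the paper uses $\prod_{\alpha\in\Phi^{+}\setminus\Phi_{\Delta'}}(U_{\alpha}-\{1\})$, you use the slightly larger set where only the simple-root coordinates are required nonzero) and then invoke irreducibility of $Q$ to intersect it with $V$. Your extra care about the ordering-independence of the simple-root projections is sound but not strictly needed, since one may simply fix an ordering and use the resulting direct-product decomposition of $Q$ as an algebraic set, as the paper does.
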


\begin{proof}
  Let $\Phi_{Q}$ denote the set of roots $\Phi^{+} - \Phi_{\Delta'}$. We have $Q=\prod_{\alpha\in\Phi_{Q}}U_{\alpha}$ as subgroups of $G$. By uniqueness of expression, this is a direct product of algebraic sets. The subset $\tilde V=\prod_{\alpha\in\Phi_{Q}} (U_{\alpha} - \{ 1 \})$ is open in $Q$, hence $\tilde V\cap V\neq\emptyset$.
  \end{proof}

\begin{lem}
  \label{conj_s_by_u}
  For a semisimple element $s\in T$ and any element $u\in \prod_{\alpha\in\supp(s)}U_{\alpha}$ we have $\supp(u)=\supp([s,u])$.
\end{lem}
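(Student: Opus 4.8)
The plan is to compute the commutator $[s,u]=sus^{-1}u^{-1}$ explicitly and then read off its simple-root support by passing to the abelianization $U/[U,U]$. First I would write $u=\prod_{\alpha\in\supp(s)}u_{\alpha}(x_{\alpha})$ with $x_{\alpha}\in F$; since every root occurring here is simple and lies in $\supp(s)$, we have $\alpha(s)\neq 1$ for each such $\alpha$. Because $T$ normalizes every root subgroup, the torus-conjugation formula (the general form of \eqref{cochar_action}, namely $su_{\beta}(x)s^{-1}=u_{\beta}(\beta(s)x)$) gives $sus^{-1}=\prod_{\alpha\in\supp(s)}u_{\alpha}(\alpha(s)x_{\alpha})$ in the same fixed ordering of $\Phi^{+}$, and in particular $[s,u]\in U$.

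Next I would exploit the fact, recalled in \S\ref{notation_background}, that $[U,U]\subseteq\prod_{\alpha\in\Phi^{+}-\Delta}U_{\alpha}$ and that the projections onto the simple root subgroups are independent of the chosen ordering of $\Phi^{+}$. Thus the simple-root support of any element of $U$ is detected by its image in the abelian quotient $U/[U,U]$. Working modulo $[U,U]$, where the group is abelian, I would combine the two factors of the commutator to obtain
$$
[s,u]\equiv\prod_{\alpha\in\supp(s)}u_{\alpha}\big((\alpha(s)-1)x_{\alpha}\big)\pmod{[U,U]}.
$$
Since only roots $\alpha\in\supp(s)\subseteq\Delta$ occur in this product, no new simple node can enter from the omitted commutator terms, which all lie in $\prod_{\alpha\in\Phi^{+}-\Delta}U_{\alpha}$.

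Finally I would read off the supports from this congruence. For each $\alpha\in\supp(s)$ the scalar $\alpha(s)-1$ is nonzero, so the projection $u_{\alpha}\big((\alpha(s)-1)x_{\alpha}\big)$ is nontrivial if and only if $x_{\alpha}\neq 0$; that is, $\alpha\in\supp([s,u])$ if and only if $\alpha\in\supp(u)$, which gives $\supp([s,u])=\supp(u)$. The only point requiring care is the legitimacy of reducing modulo $[U,U]$ to compute the support — i.e. that the passage to simple-root projections is a well-defined homomorphism into $U/[U,U]$ — but this is exactly the order-independence of simple projections granted in the preliminaries, so no genuine obstacle remains. The crucial hypothesis is simply that $\alpha(s)\neq 1$ for every $\alpha$ in the range of the product, which is precisely the assumption $u\in\prod_{\alpha\in\supp(s)}U_{\alpha}$; without it, a node with $x_{\alpha}\neq 0$ but $\alpha(s)=1$ would survive in $\supp(u)$ while disappearing from $\supp([s,u])$.
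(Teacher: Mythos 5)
Your proposal is correct and follows essentially the same route as the paper: conjugate $u=\prod_{\alpha\in\supp(s)}u_{\alpha}(x_{\alpha})$ by $s$ via the torus action, reduce modulo $[U,U]$ to get the simple-root projections $u_{\alpha}\bigl((\alpha(s)-1)x_{\alpha}\bigr)$, and use $\alpha(s)\neq 1$ to conclude. The only cosmetic difference is the commutator convention ($sus^{-1}u^{-1}$ versus the paper's $(u^{-1})^{s}u$), which does not affect the support computation.
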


\begin{proof}
  An element $u$ in $\prod_{\alpha\in\supp(s)}U_{\alpha}$ is determined by the scalars $x_{\alpha}\in F$ for which $u=\prod_{\alpha\in\supp(s)}u_{\alpha}(x_{\alpha})$. We have $$[s,u]=(u^{-1})^{s}u=\left(\prod_{\alpha\in\supp(s)}u_{\alpha}(-\alpha(s)x_{\alpha}+x_{\alpha})\right)\tilde u$$ for some $\tilde u\in [U,U]$. For $\alpha\in\supp(s)$ we have $\alpha(s)\neq 1$. Thus $-\alpha(s)x_{\alpha}+x_{\alpha}\neq 0$ whenever $x_\alpha\neq 0$ and the claim follows.
\end{proof}

\begin{prop}
  \label{diags_jordan_classes}
  Any conjugacy class in the Jordan class $\phi(I,I',J,K)$ has a marked diagram $D$ with $\supp(D)=(\Delta-I)\cup ((J- K)\cap\Delta)$.
\end{prop}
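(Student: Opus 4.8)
The plan is to reduce to a single, carefully chosen representative of the Jordan class and then read off its marked diagram by means of Proposition~\ref{su_reg}. By Proposition~\ref{diags_jordan_classes0} all conjugacy classes in a fixed Jordan class share the same set of marked diagrams, so it suffices to exhibit one conjugacy class in $\phi(I,I',J,K)$ that admits a representative in $B$ whose marked diagram has support $(\Delta-I)\cup((J-K)\cap\Delta)$.

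First I would take the representative $g=su$ realizing the data $(I,I',J,K)$, where $s$ is semisimple with $C_G(s)^{\circ}=L_{I'}$ and $u$ is a Richardson element of $P_{K}^{[L_J,L_J]}$; conjugating, I may assume $g\in B$ and $g_s=s\in T$, so that $g_u\in U\cap L_{I'}$. I would then arrange that $\supp(g_u)=(J-K)\cap\Delta$. This is the unipotent contribution already recorded after Corollary~\ref{classification}; to see it directly, note that by definition the $P_K$-class of the Richardson element $u$ meets the unipotent radical $Q_K$ of $P_{K}^{[L_J,L_J]}$ in an open subset, so by Lemma~\ref{facts_P} applied inside the reductive group $[L_J,L_J]$ (with $J$ playing the role of the simple roots) one may select $u$ of full support $J-K$, after which only the $G$-simple roots among $J-K$ survive in $\supp(g_u)$.

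Next I would compute the semisimple support. Since $C_G(s)^{\circ}=L_{I'}=\langle T,U_\alpha:\alpha(s)=1\rangle$, the roots fixed by $s$ are exactly $\Phi_{I'}$, so $\supp(s)=\Delta\setminus(\Phi_{I'}\cap\Delta)$. Because $(I,I')$ is of pseudo-Levi type we have $\Phi_{I'}\subseteq\Phi_I$, whence $\Phi_{I'}\cap\Delta\subseteq\Phi_I\cap\Delta=I$, and therefore $\Delta-I\subseteq\supp(s)$. This containment is precisely what licenses turning on the nodes of $\Delta-I$: applying Proposition~\ref{su_reg} with $\Delta_s=\Delta-I\subseteq\supp(s)$, the element $g$ is $B$-conjugate to $s\,u(D)\,\tilde u$ with $\tilde u\in[U,U]$ and $\supp(D)=(\Delta-I)\cup\supp(g_u)=(\Delta-I)\cup((J-K)\cap\Delta)$. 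The marked diagram of this conjugate is $D$, so $D\in\calD(C)$ for the class $C$ of $g$, and Proposition~\ref{diags_jordan_classes0} then propagates $D$ to the marked-diagram set of every class in the Jordan class.

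The step I expect to require the most care is pinning down the unipotent support as exactly $(J-K)\cap\Delta$ rather than merely as a subset. When $(I,I')$ is a genuine pseudo-Levi pair the base $J\subseteq I'$ may contain roots that are not simple in $G$ (for instance $-\alpha_0$ in the type $B_r$ example), so the positive system of $[L_J,L_J]$ need not be inherited from $B$; one must check that after placing $g_u$ in $U\cap L_{I'}$ the surviving $G$-simple projections are exactly those indexed by $(J-K)\cap\Delta$. The remaining ingredients are then routine: the disjointness of the two pieces is immediate, since $(J-K)\cap\Delta\subseteq I'\cap\Delta\subseteq\Phi_{I'}\cap\Delta\subseteq I$ is disjoint from $\Delta-I$, and the invocation of Proposition~\ref{su_reg} applies verbatim.
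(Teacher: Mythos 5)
Your argument is correct and follows the paper's own proof in all essentials: take a representative realizing $(I,I',J,K)$, use the openness of the Richardson orbit together with Lemma \ref{facts_P} to get unipotent support exactly $(J-K)\cap\Delta$, observe that $\Delta-I\subseteq\supp(g_s)$, and then switch on the nodes of $\Delta-I$. The only (harmless) difference is that you delegate this last step to Proposition \ref{su_reg}, whereas the paper conjugates by the explicit element $\prod_{\alpha\in\Delta-I}u_{\alpha}(1)$ and invokes Lemma \ref{conj_s_by_u}; the subtlety you flag about non-simple roots occurring in $J$ in the proper pseudo-Levi case is present, and treated no more explicitly, in the paper's version.
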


\begin{proof}
  Let $g\in G$ be an element which realizes the data $(I,I',J,K)$ of the Jordan class which it represents (see \S\ref{jordan_classes}). The $[L_{J},L_{J}]$-conjugacy class of $u$ intersects the unipotent radical of $P_{K}^{[L_{J},L_{J}]}$ in an open set. Since $L_J\subseteq L_{I'}=C_G(g_s)^{\circ}$, by Lemma \ref{facts_P}, we may assume that the projections of $g_u$ on the root groups $U_{\alpha}$ with $\alpha\in J-K$ are non-trivial. We therefore have $\supp(g)=J-K$. Since $C_G(g_s)^{\circ}\subseteq L_I$, the simple roots $\Delta-I$ are in $\supp(g_s)$. Let $u=\prod_{\alpha\in\Delta-I}u_{\alpha}(1)$. By Lemma \ref{conj_s_by_u}, $\supp([g_s^{-1},u])=\supp(u)=\Delta-I$. Moreover
  $
  g^{u^{-1}}
  =(g_sg_u)^{u^{-1}}
  =g_s[g_s^{-1},u](g_u)^{u^{-1}}
  \in g_s[g_s^{-1},u]g_u[U,U]
  $. Hence, since $\supp(g_s)\cap\supp(g_u)=\emptyset$ and $\supp(u)\subseteq\supp(g_s)$, it follows that $\calD(g^{u^{-1}})$ has those nodes marked which belong to $(\Delta-I)\cup ((J- K)\cap\Delta)$.
  \end{proof}

\subsection{$U$-regular elements}
\label{U_regular}
An element of $G$ is called \emph{$U$-regular} if it is conjugate to an element of the form $su$ with $s\in T$ and $u\in U$ a regular unipotent element. A normal subset is called $U$-regular if it contains a $U$-regular element.

\begin{prop}
  \label{regular_fully_marked}
  A normal subset $N\subseteq G$ is $U$-regular if and only if $\calD(N)$ contains $\calD^{\circ}$.
\end{prop}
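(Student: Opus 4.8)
The plan is to reduce both implications to Steinberg's characterization of regular unipotent elements, which is already the engine behind Lemma~\ref{regular_1}: a unipotent element $u=\prod_{\alpha\in\Phi^{+}}u_{\alpha}(x_{\alpha})$ is regular if and only if $x_{\alpha}\neq 0$ for every simple root $\alpha$, that is, if and only if $\supp(u)=\Delta$ (see \cite[\S3.7 Theorem 1]{Steinberg_classes}). Together with the observation recorded in \S\ref{marked_diagrams} that $\supp(\calD(su))=\supp(u)$ whenever $s\in T$ and $u\in U$, this identifies $\calD^{\circ}$ as exactly the marked diagram forced by a regular unipotent part: a Borel element $su$ satisfies $\calD(su)=\calD^{\circ}$ precisely when $u$ is regular.

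For the forward implication I would start from a $U$-regular element of $N$. By definition it is conjugate to some $su$ with $s\in T$ and $u\in U$ regular unipotent; since $N$ is a union of conjugacy classes, the representative $su$ itself belongs to $N$, and it lies in $B=TU$. Because $u$ is regular we have $\supp(u)=\Delta$, hence $\supp(\calD(su))=\Delta$ and $\calD(su)=\calD^{\circ}$. This exhibits $\calD^{\circ}\in\calD(N)$.

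For the converse I would take, by hypothesis, an element $g\in N\cap B$ with $\calD(g)=\calD^{\circ}$ and write its Borel factorization $g=su$ with $s\in T$, $u\in U$. The condition $\calD(g)=\calD^{\circ}$ says precisely that $\supp(u)=\Delta$, so Steinberg's criterion makes $u$ a regular unipotent element. Thus $g=su$ is already in the shape demanded by the definition of $U$-regular, and as $g\in N$ the normal subset $N$ is $U$-regular.

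Neither direction hides a real difficulty once the support identity and Steinberg's criterion are in hand; the only subtlety worth flagging is that the definition of $U$-regular does not require $s$ and $u$ to commute, so one may use the Borel factorization $g=su$ directly and need not pass through the Jordan decomposition of $g$.
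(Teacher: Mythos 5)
Your proposal is correct and follows essentially the same route as the paper's proof: both directions reduce to Steinberg's criterion that $u\in U$ is regular if and only if $\supp(u)=\Delta$, combined with the identity $\supp(\calD(su))=\supp(u)$ for $s\in T$, $u\in U$. Your closing remark that the definition of $U$-regular does not require $s$ and $u$ to commute is a correct and worthwhile clarification, but the argument is otherwise the one the paper gives.
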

\begin{proof}
  By \cite[\S 3.7 Theorem 1]{Steinberg_classes} an element $u\in U$ is regular if and only if it has non-trivial projections on $U_{\alpha}$ for all $\alpha\in\Delta$. This is equivalent to $su$ having non-trivial projections on $U_{\alpha}$ for all $\alpha\in\Delta$ whenever $s$ is an element of $T$. By definition, this is equivalent to $\calD(su)=\calD^{\circ}$ for any $s\in T$.
  \end{proof}

\begin{prop}
  \label{u_reg}
  Regular conjugacy classes are $U$-regular.
\end{prop}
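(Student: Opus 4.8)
The plan is to reduce the statement, via Proposition \ref{regular_fully_marked}, to showing that a regular conjugacy class $C$ satisfies $\calD^{\circ}\in\calD(C)$; equivalently, that some element of $C$ has a fully marked diagram. So I would fix a regular element $g\in C$ and, after conjugating, assume $g\in B$ with $g_s\in T$, whence $g_u\in U$. Everything then comes down to producing a $B$-conjugate of $g$ whose support on simple root groups is all of $\Delta$.

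The first step is the standard reduction of regularity to the centralizer $M:=C_G(g_s)^{\circ}$, a connected reductive group containing $T$, so $\rk(M)=\rk(G)=r$. Since $C_G(g)=C_{C_G(g_s)}(g_u)$, one has $\dim C_G(g)=\dim C_M(g_u)\geq r$, with equality exactly when $g_u$ is a regular unipotent element of $M$. As $g$ is regular, $\dim C_G(g)=r$, and therefore $g_u$ is regular unipotent in $M$.

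The heart of the argument is the combinatorial claim $\supp(g_s)\cup\supp(g_u)=\Delta$. I would argue as follows. The root system of $M$ is $\Psi=\{\alpha\in\Phi:\alpha(g_s)=1\}$, with Borel $B\cap M$, unipotent radical $U\cap M=\prod_{\alpha\in\Psi\cap\Phi^{+}}U_{\alpha}$, and simple roots the indecomposable elements of $\Psi\cap\Phi^{+}$. Any $\alpha\in\Delta$ with $\alpha(g_s)=1$ lies in $\Psi\cap\Phi^{+}$ and is indecomposable there (being already indecomposable in $\Phi^{+}$), hence is a simple root of $M$. Since $g_u\in U\cap M$ is regular unipotent in $M$, the criterion of Steinberg used already in the proof of Proposition \ref{regular_fully_marked} shows that $g_u$ has nontrivial projection on $U_{\beta}$ for every simple root $\beta$ of $M$; because projections on simple root groups are independent of the chosen ordering of $\Phi^{+}$, this is exactly the projection recorded by $\supp$. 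Thus every $\alpha\in\Delta\setminus\supp(g_s)$ lies in $\supp(g_u)$, giving $\supp(g_s)\cup\supp(g_u)=\Delta$. With this in hand I apply Proposition \ref{su_reg} with $\Delta_s=\supp(g_s)$: then $g$ is $B$-conjugate to $g_s u(D)\tilde u$ with $\supp(D)=\supp(g_s)\cup\supp(g_u)=\Delta$, so this conjugate has diagram $\calD^{\circ}$; hence $\calD^{\circ}\in\calD(C)$ and Proposition \ref{regular_fully_marked} gives that $C$ is $U$-regular.

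I expect the main obstacle to be the combinatorial claim, and precisely the matching of ``$g_u$ regular unipotent in the pseudo-Levi $M$'' with ``nonzero projections on the groups $U_{\alpha}$ for the simple roots $\alpha\in\Delta$ vanishing on $g_s$''. The delicate point is that $M$ is in general a pseudo-Levi rather than a Levi, so it may have simple roots lying outside $\Delta$ (the root $\gamma\notin\Delta$ of Theorem \ref{good_rep_ss}); one must therefore verify carefully that the simple roots of $G$ contained in $\Psi$ are genuine simple roots of $M$ and that Steinberg's criterion controls precisely those projections. The centralizer-dimension reduction and the two invocations of the earlier propositions are then routine.
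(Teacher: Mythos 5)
Your proof is correct, and its skeleton coincides with the paper's: both reduce regularity of $g$ to regularity of $g_u$ in $M=C_G(g_s)^{\circ}$ (you via the dimension count $\dim C_G(g)=\dim C_M(g_u)\geq \rk(M)=\rk(G)$, the paper by citing Steinberg \S3.5 Proposition 5 --- the same fact), then establish $\supp(g_s)\cup\supp(g_u)=\Delta$, and finish with Proposition \ref{su_reg}. The difference lies in how the middle step is organized. The paper passes through the Jordan-class parametrization: it assumes $g$ realizes data $(I,I',J,K)$, reads off $\supp(g_s)=\Delta-I'$, and uses regularity of $g_u$ to force $J=I'$ and $K=\emptyset$, whence $\supp(g_u)=I'\cap\Delta$. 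You instead argue directly inside the pseudo-Levi $M$: its positive roots are $\Psi\cap\Phi^{+}$ with $\Psi=\{\alpha:\alpha(g_s)=1\}$, any $\alpha\in\Delta\cap\Psi$ is indecomposable there and hence simple in $M$, and Steinberg's \S3.7 criterion applied in $M$ forces a nonzero projection of $g_u$ on each such $U_{\alpha}$ (with the projection unambiguous since reordering only perturbs by $[U,U]$). You correctly identify the one delicate point --- that $M$ may be a pseudo-Levi with a simple root $\gamma\notin\Delta$ --- and your argument handles it, since you only need the simple roots of $M$ that already lie in $\Delta$. A modest payoff of your route is that it bypasses the $(I,I',J,K)$ formalism, which the paper sets up only under the good-characteristic hypothesis of \S\ref{jordan_classes}, so your argument works verbatim in all characteristics (note only that $g_u\in M$ because $C_U(g_s)$ is connected). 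The concluding detour through Proposition \ref{regular_fully_marked} is equivalent to the paper's direct appeal to the definition of $U$-regularity.
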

\begin{proof}
  Let $g$ be a representative of a regular conjugacy class such that $g_s\in T$ and $g_u\in U$. We may assume that $g$ realizes the data $(I,I',J,K)$ of the Jordan class which it represents. The element $g\in G$ is regular if and only if $g_u$ is regular in $C_G(g_s)^{\circ}$ \cite[\S3.5 Proposition 5]{Steinberg_classes}. We have $\supp(g_s)=\Delta-I'$. Since $g_u$ is regular in $C_G(g_s)$ we also have that $J=I'$ and $K=\emptyset$ \cite[\S3.7 Theorem 1]{Steinberg_classes}. Thus $\supp(g_u)=(J-K)\cap \Delta=I'\cap \Delta$. It follows from Proposition \ref{su_reg} that there is an element $\tilde u\in [U,U]$ such that $g$ is conjugate to $g_su(D)\tilde u$ where $D$ is the diagram with $\supp(D)=\supp(g_s)\cup\supp(g_u)=\Delta$.
\end{proof}

\begin{prop}
  \label{u_reg_Levi_type}
  $U$-regular conjugacy classes of proper Levi type are regular.
\end{prop}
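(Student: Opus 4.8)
The plan is to produce an explicit regular representative directly from the defining witness of $U$-regularity, exploiting the Levi structure supplied by the hypothesis. First I would invoke the definition of $U$-regular to fix a representative $g=su\in C$ with $s\in T$ and $u\in U$ a regular unipotent element of $G$, so that $\supp(u)=\Delta$. The semisimple part $g_s$ is conjugate by an element of $U$ to $s$, hence $C_G(g_s)^{\circ}$ is conjugate to $C_G(s)^{\circ}$; since $C$ is of proper Levi type, $C_G(s)^{\circ}$ is a Levi subgroup. The essential point, and the place where proper Levi type (as opposed to pseudo‑Levi type) is used, is that one may arrange $s$ so that $L:=C_G(s)^{\circ}=L_I$ is a \emph{standard} Levi subgroup with $I\subseteq\Delta$ and $s\in Z(L_I)$; equivalently the parabolic $P_I=L_IU_P\supseteq B$ has $L_I$ as Levi factor, where $U_P=\prod_{\alpha\in\Phi^{+}-\Phi_I}U_{\alpha}$.

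Granting this, I would decompose $u=u_Lu_P$ along $U=(U\cap L_I)\cdot U_P$, with $u_L\in U\cap L_I=\prod_{\alpha\in\Phi_I^{+}}U_{\alpha}$ and $u_P\in U_P$. Since $\supp(u)=\Delta\supseteq I$ and the projection of $u$ onto $U_{\alpha}$ for a simple root $\alpha\in I$ is unaffected by the factor $u_P\in U_P$, the element $u_L$ has full support $I$ and is therefore, by \cite[\S 3.7 Theorem 1]{Steinberg_classes} applied inside $L_I$, a regular unipotent element of $L_I$.

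Next I would eliminate the factor $u_P$. As $s\in Z(L_I)$ centralizes $u_L$, the element $su_L\in L_I$ has semisimple part $s$, and by \eqref{cochar_action} $s$ acts on $U_P$ with $\alpha(s)\neq 1$ on every factor $U_{\alpha}$; since any unipotent element of $C_G(s)$ lies in $C_G(s)^{\circ}=L_I$ and $U_P\cap L_I=1$, we get $C_{U_P}(su_L)\subseteq C_{U_P}(s)=1$. Hence the morphism $U_P\rightarrow U_P$, $x\mapsto (su_L)^{-1}x(su_L)x^{-1}$, has invertible differential at the identity (its linearization is $\operatorname{Ad}((su_L)^{-1})-\mathrm{id}$, which has no eigenvalue $0$) and is thus an isomorphism of varieties; consequently every element of the coset $(su_L)U_P$ is $U_P$-conjugate to $su_L$. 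In particular $g=(su_L)u_P$ is conjugate to $su_L$. Finally $C_G(su_L)=C_{C_G(s)}(u_L)$, so $\dim C_G(su_L)=\dim C_{L_I}(u_L)=\rk(L_I)=\rk(G)$ because $u_L$ is regular unipotent in $L_I$; therefore $su_L$, and hence $g$, is regular.

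The main obstacle is precisely the reduction asserted in the first paragraph: that a $U$-regular class of proper Levi type admits a witness $su$ for which $C_G(s)^{\circ}$ is a \emph{standard} Levi and $u\in U$ is regular unipotent. This is exactly where proper Levi type is indispensable, since for a genuinely non‑standard Levi $C_G(s)^{\circ}$ the parabolic with this Levi factor does not contain $B$, the decomposition $u=u_Lu_P$ with $u_L\in U$ is unavailable, and one cannot force $u_L$ to be regular in $L$. I would therefore spend most of the effort justifying that, in the proper Levi case, the full‑support representative guaranteed by $U$-regularity can be conjugated into such a standard position (for instance by tracking the support through a Weyl‑group conjugation that carries $\Phi_s$ to a subset of $\Delta$), after which Steps 2--4 are routine.
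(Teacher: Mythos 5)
Your steps 2--4 are correct and in substance coincide with the paper's own argument: granted a witness $su$ with $s\in T$, $u\in U$ regular unipotent \emph{and} $C_G(s)^{\circ}=L_I$ a standard Levi with $I\subseteq\Delta$, the decomposition $u=u_Lu_P$, the removal of $u_P$ by conjugation inside $U_P$ (this is \cite[\S2.4]{Humphreys_conjugacy_classes}) and the count $\dim C_G(su_L)=\rk(G)$ (this is \cite[\S3.5 Proposition 5]{Steinberg_classes} together with \cite[\S3.7 Theorem 1]{Steinberg_classes}) do yield regularity. The gap is the reduction you postpone to your last paragraph, and it is fatal: that reduction is not merely unproved but impossible, and the statement itself fails without it. Take $G=\SL_3$, $s=\diag(a,b,a)$ with $a\neq b$ and $a^{2}b=1$, and $u=u_{\alpha_1}(1)\,u_{\alpha_2}(1)\,u_{\alpha_1+\alpha_2}(c)$ with $c$ chosen so that the $(1,3)$-entry of the matrix $u$ equals $b/(b-a)$. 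Then $u$ is regular unipotent, so the class of $su$ is $U$-regular, and it is of proper Levi type because $C_G(s)^{\circ}=\langle T,U_{\pm(\alpha_1+\alpha_2)}\rangle$ is a Levi subgroup (in type $A$ all semisimple centralizers are Levi subgroups, as the paper itself notes). But with this choice $su-aI$ has rank one, so $su$ is diagonalizable: it is conjugate to the semisimple element $\diag(a,a,b)$, whose centralizer has dimension $4>2=\rk(G)$. Thus this $U$-regular class of proper Levi type is not regular. Your own steps 2--4 explain why the reduction must fail here: a witness in standard position would force regularity, so none exists, and no Weyl-group conjugation carrying $\{\pm(\alpha_1+\alpha_2)\}$ onto $\Phi_{\{\alpha_1\}}$ can keep the unipotent factor simultaneously inside $U$ and regular.

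For what it is worth, the paper's printed proof makes the same illegitimate move: after conjugating $su$ by $U$ to $sg_u$ with $g_u\in C_U(s)$, it asserts $\supp(g_u)=\supp(u)\cap I$, an identity that presupposes $C_G(s)^{\circ}$ equals the standard Levi $L_I$; the preceding step (``conjugating in $C_G(g_s)$'') fixes $s$ and therefore cannot arrange this. In the example above $g_u=1$ while $\supp(u)\cap I=\{\alpha_1\}$. So do not invest further effort in the reduction of your first paragraph; the proposition needs a stronger hypothesis (for instance, that the class admits a $U$-regular witness $su$ for which $\{\alpha\in\Phi:\alpha(s)=1\}$ is generated by its intersection with $\Delta$) before any proof along these lines can go through.
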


\begin{proof}
  Let $h=su$ be a representative of a $U$-regular conjugacy class with $s\in T$ and $u$ a regular element in $U$. The element $h$ is $U$-conjugate to $g$ with $g_s=s\in T$ and $g_u\in C_U(g_s)$ \cite[\S2.4]{Humphreys_conjugacy_classes}. Conjugating in $C_G(g_s)$, we may assume that $g$ realizes the data $(I,I',J,K)$ of the Jordan class which it represents. Since $h$ is $U$-conjugate to $g$, we have $\supp(g_u)=\supp(u)\cap I$. Since the class is of proper Levi type, we have $I=I'$, hence $\supp(g_u)=\supp(u)\cap I'$. Since $g$ is $U$-regular, $\supp(g_u)=\supp(u)\cap I=I$, hence $g_u$ is regular in the Levi subgroup $C_G(g_s)^{\circ}$ \cite[\S3.7 Theorem 1]{Steinberg_classes}. The proposition follows from \cite[\S3.5 Proposition 5]{Steinberg_classes}.
  \end{proof}

\begin{prop}
  \label{U_regular_T}
  If $C_1,C_2,\dots, C_6$ are $U$-regular conjugacy classes then their product contains all regular semisimple conjugacy classes of $G$.
\end{prop}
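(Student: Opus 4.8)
The plan is to group the six classes into three pairs and to realise any prescribed regular semisimple target as a product of three regular unipotent elements, one coming from each pair. Since a product of normal subsets is again a normal subset, $C_1\cdots C_6$ is a union of conjugacy classes, so it suffices to show that for every regular semisimple class, represented by some $t\in T$, the product contains a conjugate of $t$; fix such a $t$. Write $P_i=C_{2i-1}C_{2i}$ for $i=1,2,3$, and let $O$ denote the regular unipotent class of $G$.

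The first step, which is the crux, is the following claim: for any two $U$-regular classes $C,C'$ the product $CC'$ contains a regular unipotent element, hence, being a normal subset, contains all of $O$. Granting this, each $P_i\supseteq O$, so $P_1P_2P_3\supseteq O\cdot O\cdot O$, and it remains to prove that every regular semisimple $t$ is a product of three regular unipotent elements, i.e. $t\in O^3$; then $t\in O^3\subseteq P_1P_2P_3=C_1\cdots C_6$, as required. For the latter I would argue from Corollary \ref{coro_UUU}: the multiplication map $m\colon\calU\times\calU\times\calU\to G$ is surjective, and $O^3$ is a dense open subset of the irreducible variety $\calU^3$, so that $m(O^3)$ is dense in $G$; the point is then that over a regular semisimple element the fibre of $m$ has the expected dimension $2\dim G-3\rk(G)$ and is not swallowed by the proper closed locus $\calU^3\smallsetminus O^3$, so that the fibre meets $O^3$. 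In small rank one can instead exhibit the factorisation directly by reducing along the root $\SL_2$'s.

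It remains to prove the claim, which is where the work lies. By definition each of $C,C'$ contains a representative $su$ with $s\in T$ and $u\in U$ regular unipotent, and conjugating by $\dot w_0$ it likewise contains a representative $s''u''$ with $u''\in U^{\dot w_0}$ regular unipotent; moreover, by Propositions \ref{regular_fully_marked} and \ref{su_reg}, $C$ and $C'$ meet $B$ in elements with fully marked diagram, i.e. with full simple-root support. Using the conjugation freedom together with the unitriangular factorisation $U\cdot U^{\dot w_0}\cdot U\cdot U^{\dot w_0}=G$ of Theorem \ref{thm_UUUU}, I would pair an ``upper'' representative of $C$ with a ``lower'' representative of $C'$ and show that their product can be arranged to be a regular unipotent element: the commutator relations and Lemma \ref{conj_s_by_u} control the simple-root support, guaranteeing that full support is achieved, whence the unipotent part is regular by \cite[\S3.7 Theorem 1]{Steinberg_classes}. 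The delicate point, and the main obstacle, is to cancel the two semisimple parts $s$ and $s''$ against each other while simultaneously keeping the product unipotent with full support; this is exactly the step that forces the use of a genuine factorisation of $G$ rather than a bare dimension count, and it is here that the hypothesis that both classes are $U$-regular, so that each carries a regular unipotent piece spanning all simple directions, is used in full.
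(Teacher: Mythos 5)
Your argument reduces the proposition to two claims, and neither is actually proved. Claim (A): for any two $U$-regular classes $C,C'$ the product $CC'$ contains a regular unipotent element. You describe a strategy (pair an ``upper'' representative with a ``lower'' one, control simple-root supports via the commutator relations and Lemma \ref{conj_s_by_u}), but then you yourself identify the cancellation of the two semisimple parts $s$ and $s''$, while keeping the product unipotent with full support, as ``the main obstacle'' --- and stop there. That cancellation is precisely the content that would need proving: nothing in Theorem \ref{thm_UUUU} produces an element of $C\cap sU$ and an element of $C'$ whose product lies in $U$ with full support, and it is not even clear the claim holds for arbitrary $U$-regular classes (which need not be regular classes; cf.\ Proposition \ref{u_reg_Levi_type}).

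Claim (B): every regular semisimple $t$ lies in $O^3$. The argument you sketch does not close this. From Corollary \ref{coro_UUU} and the openness of $O^{\times 3}$ in $\calU^{\times 3}$ you only get that the image of $O^{\times 3}$ contains a dense open subset of $G$; that is not enough, because a dense open normal subset of $G$ can miss an entire regular semisimple class (its complement is a closed normal set of dimension $<\dim G$, while each regular semisimple class has dimension only $\dim G-\rk(G)$). The fibre count also fails numerically: every nonempty fibre of $m|_{\calU^{\times 3}}$ has dimension at least $3\dim\calU-\dim G$, whereas $\calU^{\times 3}\setminus O^{\times 3}$ has dimension $3\dim\calU-2\geq 3\dim\calU-\dim G$, so nothing prevents the fibre over a particular $t$ from being swallowed by the complement. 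Hence both halves of your reduction are open. For comparison, the paper's proof avoids both issues: it partitions $\Delta$ into two mutually orthogonal subsets, works inside the commuting rank-one subgroups $G_\alpha$, and applies $\calU^{3}=G$ within each $G_\alpha$ to realize every element of $T\cap G_\alpha$ modulo the unipotent radical; three classes handle each half of $\Delta$, and a regular $t\in T$ times an element of $U$ is conjugate to $t$. No regular unipotent element in a product of two classes is ever needed.
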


\begin{proof}
  We may partition the set of simple roots $\Delta$ into two sets, $\Delta_1$ and $\Delta_2$, such that $\alpha$ and $\beta$ are orthogonal for any two distinct roots $\alpha,\beta\in \Delta_1$ or $\alpha,\beta\in\Delta_2$. Thus, for any two such roots $[G_{\alpha},G_{\beta}]=1$. Let $L_1$ be the standard Levi subgroup $T\prod_{\alpha\in\Delta_1}G_{\alpha}$ and let $Q_1$ be the unipotent radical of the parabolic subgroup $L_1U$.

  For $i\in\{1,\dots,6\}$, since $C_i$ is $U$-regular, it is represented by an element $s_iu_i$ with $s_i\in T$ and $u_i\in U$ regular. Moreover, $u_i=q_iv_i$ with $v_i=\prod_{\alpha\in\Delta_1}v_{i,\alpha}$ for some $q_i\in Q$ and $v_{i,\alpha}\in U_{\alpha}=G_{\alpha}\cap U$. Conjugating by $L$, and since the simple factors of $L$ commute, we may choose the elements $v_{i,\alpha}$ to be any regular unipotent element in $G_{\alpha}$. The product $C_1C_2C_3$ contains
    $$
  (s_1q_1v_1) (s_2q_2v_2) (s_3q_3v_3)=s_1s_2s_3 q_1^{s_2s_3}q_2^{(v_1^{s_2})^{-1}s_3}q_3^{(v_1^{s_2s_3})^{-1}(v_2^{s_3})^{-1}}v_1^{s_2s_3}v_2^{s_3}v_3.
  $$
  Since the set of non-trivial unipotent elements in $G_{\alpha}$ is stable by $T$-conjugation the product $v_1^{s_2s_3}v_2^{s_3}v_3$ ranges over all products of tripples of unipotent elements in $\prod_{\alpha\in\Delta_1}G_{\alpha}$. By Corollary \ref{coro_UUU}, and since $[G_{\alpha},G_{\beta}]=1$ for distinct roots $\alpha,\beta\in\Delta$, there are $v_1$, $v_2$ and $v_3$ such that $b=v_1^{s_2s_3}v_2^{s_3}v_3$ for any element $b\in \prod_{\alpha\in\Delta_1}G_{\alpha}$. Since $Q_1$ is stable under conjugation by $L$, the element $q_1^{s_2s_3}q_2^{(v_1^{s_2})^{-1}s_3}q_3^{(v_1^{s_2s_3})^{-1}(v_2^{s_3})^{-1}}$ lies in $Q_1$. In particular, for each $t\in\prod_{\alpha\in\Delta_1}(T\cap G_{\alpha})$ there is $q_t\in Q_1$ such that
  $$
  s_1s_2s_3tq_t\in C_1C_2C_3.
  $$
Similarly, for each $t\in\prod_{\alpha\in\Delta_2}(T\cap G_{\alpha})$ there is $q_t\in U$ such that
  $$
  s_4s_5s_6tq_t\in C_4C_5C_6.
  $$
  Since $T=\prod_{\alpha\in\Delta}(T\cap G_{\alpha})$, it follows that for any $t\in T$ the normal subset $C_1\cdots C_6$ contains $tq_t$ for some $q_t\in U$. In particular, if $t\in T=s_1s_2s_3s_4s_5s_6T$ is regular, then $tq_t$ is conjugate to $t$, hence $C_1\cdots C_6$ contains all regular semisimple elements of $T$.
  \end{proof}

\section{Marked diagrams and products of conjugacy classes}
\label{diagrams_and_conjugacy_classes}

\begin{lem}
  \label{T_conj}
  Let $\Delta'$ be a subset of $\Delta$. If $u_1,u_2\in U$ are such that $\supp(u_1)=\supp(u_2)=\Delta'$ then
  $$
  u_1^{T}u_2^{T}[U,U]=\left(\prod_{\alpha\in\Delta'}U_{\alpha}\right)[U,U].
  $$
\end{lem}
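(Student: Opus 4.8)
The plan is to work in the abelianization $U/[U,U]$, where both the $T$-action and the multiplication linearize, and then transport the resulting equality of sets back to $U$.

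First I would record the structure of $U/[U,U]$. By the commutator relations one has $[U,U]=\prod_{\alpha\in\Phi^{+}-\Delta}U_{\alpha}$, and the simple projections assemble into an isomorphism $\pi\colon U/[U,U]\xrightarrow{\sim}\prod_{\alpha\in\Delta}U_{\alpha}\cong\bigoplus_{\alpha\in\Delta}F$ under which $\pi(u)=(x_{\alpha})_{\alpha\in\Delta}$, the class of a product is the coordinatewise sum, and---by \eqref{cochar_action}---the torus acts diagonally, $t\cdot(x_{\alpha})=(\alpha(t)x_{\alpha})$. Since $\supp(u_{i})=\Delta'$, the vector $\pi(u_{i})$ has $x_{\alpha}\neq 0$ exactly for $\alpha\in\Delta'$.

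Next I would identify the orbits $\pi(u_{i}^{T})$. The key step is that the homomorphism $\theta\colon T\to(F^{\times})^{\Delta'}$, $t\mapsto(\alpha(t))_{\alpha\in\Delta'}$, is surjective: the simple roots in $\Delta'$ are linearly independent characters, so $\ker\theta$ has dimension $\dim T-|\Delta'|$ and the image is a closed connected subgroup of full dimension in the connected torus $(F^{\times})^{\Delta'}$, hence all of it. Multiplying this image by the fixed nonzero scalars $x_{\alpha}$ gives $\pi(u_{i}^{T})=\{(y_{\alpha})_{\alpha}: y_{\alpha}\in F^{\times}\text{ for }\alpha\in\Delta',\ y_{\alpha}=0\text{ otherwise}\}$ for $i=1,2$. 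Adding the two orbits, each $\Delta'$-coordinate of $\pi(u_{1}^{T}u_{2}^{T})=\pi(u_{1}^{T})+\pi(u_{2}^{T})$ ranges over $F^{\times}+F^{\times}$; as $F$ is algebraically closed, hence infinite, one has $F^{\times}+F^{\times}=F$ (given $w$, pick $y\in F^{\times}$ with $y\neq w$ and set $z=w-y\in F^{\times}$). Thus $\pi(u_{1}^{T}u_{2}^{T})=\pi\!\left(\prod_{\alpha\in\Delta'}U_{\alpha}\right)$.

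Finally I would lift back: for any subset $S\subseteq U$ one has $S[U,U]=\pi^{-1}(\pi(S))$, so applying this to $S=u_{1}^{T}u_{2}^{T}$ and to $S=\prod_{\alpha\in\Delta'}U_{\alpha}$ turns the equality of images into the asserted equality of subsets of $U$. The step I expect to be the main obstacle is precisely this passage between $U$ and its abelianization: it relies on $\ker\pi$ being exactly $[U,U]=\prod_{\alpha\in\Phi^{+}-\Delta}U_{\alpha}$, so that both product sets are genuine unions of $[U,U]$-cosets and no information is lost modulo $[U,U]$; beyond this, the surjectivity of $\theta$ is the only other nontrivial input.
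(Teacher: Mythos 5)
Your argument is correct, but it follows a genuinely different route from the paper's. The paper first reduces to the case $\Delta'=\Delta$ by passing to $[L_{\Delta'},L_{\Delta'}]$ (as in Lemma \ref{regular_1}) and then argues by density: since $u_i$ is regular modulo $[U,U]$, Steinberg's criterion gives $\dim C_T(u_i)=0$, so $u_i^{T}$ is open in $\prod_{\alpha\in\Delta}U_{\alpha}$, hence $u_i^{T}[U,U]$ is open in $U$, and the product of two open subsets of the connected group $U$ is all of $U$. You instead linearize in $U/[U,U]\cong\bigoplus_{\alpha\in\Delta}F$ and compute the orbits exactly: the surjectivity of $t\mapsto(\alpha(t))_{\alpha\in\Delta'}$ (linear independence of the simple roots in $X(T)$, plus closedness and connectedness of the image) gives $\pi(u_i^{T})=(F^{\times})^{\Delta'}\times\{0\}$, and $F^{\times}+F^{\times}=F$ finishes the computation. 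Your version avoids both the reduction to the Levi subgroup and the appeal to Steinberg's regularity theorem, works uniformly for any $\Delta'$, and yields slightly more information (the exact orbit rather than just a dense open subset); the paper's version is shorter because the openness of $u^{T}[U,U]$ was already established en route to Lemma \ref{regular_1}. One caveat, which you rightly flag as the delicate point: your lifting step $S[U,U]=\pi^{-1}(\pi(S))$ needs $\ker\pi=[U,U]$, i.e.\ the equality $[U,U]=\prod_{\alpha\in\Phi^{+}-\Delta}U_{\alpha}$, whereas the preliminaries of the paper record only the inclusion $[U,U]\subseteq\prod_{\alpha\in\Phi^{+}-\Delta}U_{\alpha}$. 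The paper's own proof implicitly requires the same reverse containment (both to identify $u^{T}[U,U]$ with $\bar u^{T}[U,U]$ and to see that this set has full dimension in $U$), so you are not worse off; but to make your argument self-contained you should justify that every non-simple positive root subgroup lies in $[U,U]$ -- this holds for a simple algebraic group over an algebraically closed field, using the commutator relations together with the fact that $[U,U]$ is a closed connected $T$-stable subgroup and hence a product of full root subgroups, and it needs a word of care in bad characteristic where some structure constants vanish.
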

\begin{proof}
  The sets $u_1^{T}u_2^{T}$ and $\prod_{\alpha\in\Delta'}U_{\alpha}$ lie in the subsystem subgroup $[L_{\Delta'},L_{\Delta'}]$. As in the proof of Lemma \ref{regular_1}, it suffices to prove the claim for $\Delta'=\Delta$. Again, as in the proof of Lemma \ref{regular_1}, $u_i^{T}[U,U]$ is open in $U$. Since the product of two open sets in a connected algebraic group equals the whole group,
  \begin{align*}
  U=(u_1^{T}[U,U])(u_2^{T}[U,U])=u_1^{T}u_2^{T}[U,U].
  &\qedhere
  \end{align*}
  \end{proof}

\begin{prop}
  \label{sum_diagram_Nor}
  Let $C_1$ and $C_2$ be two conjugacy classes of $G$ represented by $g_1$ and $g_2$ respectively. If $g_1,g_2\in B$, then
  $$
  \calD(g_1)\boxplus\calD(g_2)\in\calD(C_1C_2).
  $$
\end{prop}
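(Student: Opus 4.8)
The plan is to produce a single element $h\in(C_1C_2)\cap B$ whose marked diagram is $D:=\calD(g_1)\boxplus\calD(g_2)$. Writing $\Delta_i:=\supp(\calD(g_i))$, the definition of $\boxplus$ gives $\supp(D)=\Delta_1\cup\Delta_2$, so it suffices to find such an $h$ with $\supp(\calD(h))=\Delta_1\cup\Delta_2$. First I would exploit that the factors may be adjusted inside their classes without leaving $B$: since $T$ normalizes $B$, for any $t\in T$ the element ${}^{t}g_1=tg_1t^{-1}$ again lies in $C_1\cap B$, and then $h:={}^{t}g_1\cdot g_2$ lies in $(C_1C_2)\cap B$. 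The whole problem is thus reduced to choosing $t$ so that the simple-root support of the product $h$ is exactly $\Delta_1\cup\Delta_2$.

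Write $g_i=s_iu_i$ with $s_i\in T$ and $u_i\in U$, and let $x_\alpha,y_\alpha\in F$ ($\alpha\in\Delta$) denote the coefficients of the simple-root projections of $u_1$ and $u_2$, so that $x_\alpha\neq0\iff\alpha\in\Delta_1$ and $y_\alpha\neq0\iff\alpha\in\Delta_2$ (recall $\supp(\calD(su))=\supp(u)$). Conjugation by $t$ fixes $s_1$ (as $T$ is abelian) and, by \eqref{cochar_action}, rescales the $\alpha$-coefficient of $u_1$ to $\alpha(t)x_\alpha$. Collecting the torus to the left, $h=s_1s_2\big({}^{(s_2^{-1}t)}u_1\big)u_2$, and since the simple-root projections factor through the abelianization $U/[U,U]$ — recall $[U,U]\subseteq\prod_{\alpha\in\Phi^{+}-\Delta}U_\alpha$, so on simple roots they are additive — the $\alpha$-coefficient of the unipotent part of $h$ equals
$$
\alpha(s_2)^{-1}\alpha(t)\,x_\alpha+y_\alpha .
$$
The torus factor $s_1s_2$ does not affect the marked diagram, so $\supp(\calD(h))=\{\alpha\in\Delta:\alpha(s_2)^{-1}\alpha(t)x_\alpha+y_\alpha\neq0\}$.

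It remains to choose $t$. For $\alpha\notin\Delta_1\cup\Delta_2$ the coefficient is $0$ for every $t$; for $\alpha\in\Delta_1\setminus\Delta_2$ it equals $\alpha(s_2)^{-1}\alpha(t)x_\alpha\neq0$ and for $\alpha\in\Delta_2\setminus\Delta_1$ it equals $y_\alpha\neq0$, in both cases automatically. The only possible cancellation is on the overlap $\alpha\in\Delta_1\cap\Delta_2$, where $x_\alpha,y_\alpha\neq0$ and the coefficient vanishes for the single value $\alpha(t)=-y_\alpha\alpha(s_2)x_\alpha^{-1}\in F^{\times}$; this overlap is the real obstacle. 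To beat it I would use that the simple roots are $\ZZ$-linearly independent in $X(T)$, whence the homomorphism $T\to(F^{\times})^{\Delta}$, $t\mapsto(\alpha(t))_{\alpha\in\Delta}$, is surjective, so the scalars $\alpha(t)$ can be prescribed independently; as $F$ is algebraically closed and therefore infinite, for each $\alpha\in\Delta_1\cap\Delta_2$ one can pick $\alpha(t)\in F^{\times}$ avoiding its single forbidden value. For such a $t$ we obtain $\supp(\calD(h))=\Delta_1\cup\Delta_2$, that is $\calD(h)=D$, and since $h\in(C_1C_2)\cap B$ this gives $D\in\calD(C_1C_2)$.
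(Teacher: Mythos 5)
Your proof is correct. You and the paper both exploit the same two facts -- that the simple-root coefficients are additive modulo $[U,U]$ and that $T$-conjugation rescales the coefficient at $\alpha$ by $\alpha(t)$ -- and both see that the only danger is cancellation on the overlap $\Delta_1\cap\Delta_2$; but the way you defeat the cancellation is genuinely different. The paper conjugates \emph{both} factors by all of $T$, splits each $u_i$ into its part supported on $\Delta_1\cap\Delta_2$ and the complementary part, and invokes Lemma \ref{T_conj}, which rests on Steinberg's regular-element theorem: the $T$-orbit of an element with full support in $\prod_{\alpha\in\Delta'}U_\alpha$ is dense open there, and the product of two dense open subsets of a connected group is everything, so the overlap part of the product sweeps out all of $\prod_{\alpha\in\Delta_1\cap\Delta_2}U_\alpha$ modulo $[U,U]$. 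You instead conjugate only $g_1$, by a single well-chosen $t$, compute the resulting coefficient $\alpha(s_2)^{-1}\alpha(t)x_\alpha+y_\alpha$ explicitly, and avoid the one forbidden value of $\alpha(t)$ at each $\alpha\in\Delta_1\cap\Delta_2$ using the surjectivity of $t\mapsto(\alpha(t))_{\alpha\in\Delta}$ (linear independence of the simple roots in $X(T)$, or equivalently finiteness of $\bigcap_\alpha\ker\alpha=Z(G)$) together with the infinitude of $F$. Your route is more elementary and produces an explicit witness in $(C_1C_2)\cap B$, entirely bypassing Lemma \ref{T_conj} and the open-orbit machinery; the paper's route proves the stronger statement that the product of the two $T$-orbits covers the whole group $\bigl(\prod_{\alpha\in\Delta_1\cap\Delta_2}U_\alpha\bigr)[U,U]$, and its openness technique is shared with Lemma \ref{regular_1}, so the authors get to reuse it. One small remark: your citation of \eqref{cochar_action} covers only conjugation by elements of the form $\alpha^\vee(t)$; what you actually need is the standard relation ${}^{s}u_{\alpha}(x)=u_{\alpha}(\alpha(s)x)$ for arbitrary $s\in T$, which is equally standard but worth stating as such.
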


\begin{proof}
  For $i=1,2$ let $g_i=s_iu_i$ with $s_i\in T$ and $u_i\in U$ be the factorization of the element $g_i$ in $B=TU$. Note that $C_i$ contains $(s_iu_i)^{T}=s_i(u_i)^{T}$. Let $u_i^{1}$ be the projection of $u_i$ on $\prod_{\alpha\in\Delta}U_{\alpha}$ and $u_i^{0}\in[U,U]$ be such that $u_i=u_i^{1}u_{i}^{0}$. Then $C_i$ contains $s_i(u_i^{1})^{T}(u_i^{0})^{T}$ hence, by the commutator relations, the product $C_1C_2$ contains
$
s_1(u_1^{1})^{T}(u_1^{0})^{T}s_2(u_2^{1})^{T}(u_2^{0})^{T}
$
which lies in
$s_1s_2(u_1^{1})^{T}(u_2^{1})^{T}[U,U]$.

Let $\Delta_i=\supp(u_i^{1})$, let $v_i$ be the projection of $u_i^{1}$ on $\prod_{\alpha\in\Delta-(\Delta_1\cap\Delta_2)} U_{\alpha}$ and let $w_i$ be the projection of $u_i^{1}$ on $\prod_{\alpha\in\Delta_1\cap\Delta_2} U_{\alpha}$. We have
$$
s_1s_2(u_1^{1})^{T}(u_2^{1})^{T}[U,U]
=
s_1s_2(v_1)^{T}(w_1)^{T}(w_2)^{T}(v_2)^{T}[U,U]
$$
and, by Lemma \ref{T_conj}, this further equals
$$
s_1s_2(u_1^{1})^{T}(u_2^{1})^{T}[U,U]
=
s_1s_2(v_1)^{T}\left(\prod_{\alpha\in\Delta_1\cap\Delta_2}U_{\alpha}\right)(v_2)^{T}[U,U].
$$
Since $\supp(v_1)$, $\supp(v_2)$ and $\supp(w_1)=\supp(w_2)$ are pairwise disjoint, the above set contains an element with non-trivial projections on $U_{\alpha}$ exactly when $\alpha\in\Delta_1\cup\Delta_2$, i.e. it contains an element with diagram $\calD(g_1)\boxplus\calD(g_2)$.
\end{proof}

\begin{cor}
  \label{sum_to_regular}
  Let $N_1,\dots, N_k$ be normal subsets of $G$. If $\sum_{i=1}^{k}\calD(N_i)$ contains a regular diagram of $\NN\Delta$, then the normal subset $N_1\cdots N_k$ is $U$-regular.
\end{cor}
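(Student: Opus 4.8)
The plan is to combine Proposition \ref{sum_diagram_Nor}, iterated over the $k$ factors, with the comparison between the two monoids furnished by Lemma \ref{monoid_morphism}, and to conclude via the characterisation of $U$-regularity in Proposition \ref{regular_fully_marked}.

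First I would unwind the hypothesis. To say that $\sum_{i=1}^{k}\calD(N_i)$ contains a regular diagram of $\NN\Delta$ means there is a choice of diagrams $D_i\in\calD(N_i)$ whose sum $\sum_{i=1}^{k}D_i$ is regular in $\NN\Delta$. Applying the morphism $\psi$ of Lemma \ref{monoid_morphism} and using that it preserves regularity, I obtain that $\psi\big(\sum_i D_i\big)=\boxplus_{i=1}^{k}D_i$ is regular in $\NN_{\OR}\Delta$; since the only regular element of $\NN_{\OR}\Delta$ is the fully marked diagram, this reads $\boxplus_{i=1}^{k}D_i=\calD^{\circ}$. Concretely, for every simple root $\alpha$ there is an index $i$ for which the node $\alpha$ is marked in $D_i$.

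Next I would produce representatives and iterate. Each $D_i\in\calD(N_i)$ is by definition $\calD(g_i)$ for some $g_i\in N_i\cap B$, and $g_i$ lies in a conjugacy class $C_i\subseteq N_i$. Applying Proposition \ref{sum_diagram_Nor} to $C_1,C_2$ yields an element $h_2\in(C_1C_2)\cap B$ with $\calD(h_2)=\calD(g_1)\boxplus\calD(g_2)$. Since $C_1C_2$ is again a normal subset, $h_2$ represents a conjugacy class contained in $C_1C_2$, so I may feed it back into Proposition \ref{sum_diagram_Nor} against $C_3$. Using associativity and commutativity of $\boxplus$ together with the monotonicity $A\subseteq A'\Rightarrow\calD(A)\subseteq\calD(A')$, an induction on the number of factors gives $\boxplus_{i=1}^{k}\calD(g_i)\in\calD(C_1\cdots C_k)\subseteq\calD(N_1\cdots N_k)$.

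Combining the two steps, $\calD^{\circ}=\boxplus_{i=1}^{k}D_i\in\calD(N_1\cdots N_k)$, and Proposition \ref{regular_fully_marked} then yields that $N_1\cdots N_k$ is $U$-regular. The only genuine work is the inductive passage from the two-factor statement of Proposition \ref{sum_diagram_Nor} to $k$ factors; the point to verify there is that each partial product $C_1\cdots C_j$ is again a normal subset possessing a representative in $B$ whose marked diagram is the accumulated $\boxplus$, so that the proposition can legitimately be reapplied. Everything else is bookkeeping transported between the two monoids by $\psi$.
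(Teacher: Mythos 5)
Your argument is correct and follows essentially the same route as the paper's own proof: choose $D_i\in\calD(N_i)$ with regular sum, transport regularity through $\psi$ to the fully marked diagram in $\NN_{\OR}\Delta$, iterate Proposition \ref{sum_diagram_Nor} over the conjugacy classes $C_i\subseteq N_i$ carrying the $D_i$, and conclude with Proposition \ref{regular_fully_marked}. Your explicit attention to the inductive step (that each partial product $C_1\cdots C_j$ is a normal subset with a representative in $B$ realising the accumulated $\boxplus$) is exactly the point the paper leaves implicit with the words ``by induction.''
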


\begin{proof}
  Let $D_i\in\calD(N_i)$ be diagrams such that $D=\sum_{i=1}^{k}D_i$ is a regular diagram. By Lemma \ref{monoid_morphism}, $D$ is regular if and only if $\psi(D)=\boxplus_{i=1}^{k}\psi(D_i)$ is regular. Let $C_i\subseteq N_i$ be conjugacy classes such that $\psi(D_i)\in\calD(C_i)$. By Proposition \ref{sum_diagram_Nor}, $\psi(D_1)\boxplus\psi(D_2)$ is a diagram in $\calD(C_1C_2)$. By induction $\calD(C_1\cdots C_k)$ contains $\psi(D)$. By Proposition \ref{regular_fully_marked}, the normal subset $\prod_{i\in I_j}C_i$ is $U$-regular.
  \end{proof}

\begin{proof}[Proof of Proposition \ref{normal_sets_prod_regular_G}]
  As before $r=\rk(G)$. Let $D_i\in\calD(N_i)$ be diagrams such that $\sum_{i=1}^{k}D_i\geq 12r\calD^{\circ}$. Let $C_i\subseteq N_i$ be a conjugacy class such that $D_i\in\calD(C_i)$. It is enough to show that $\prod_{i=1}^{k}C_i=G$. By Lemma \ref{sum_of_diagrams}, there is a partition of $\{1,\dots,k\}$ into subsets $I_1,\dots,I_{12}$ such that $\sum_{i\in I_j}D_i\geq \calD^{\circ}$ for all $1\leq j\leq 12$. By Corollary \ref{sum_to_regular}, the normal subset $\prod_{i\in I_j}C_i$ is $U$-regular, i.e. it contains a $U$-regular conjugacy class $\tilde C_j$. By Proposition \ref{U_regular_T}, the product $\tilde C_1\cdots\tilde C_{12}$ contains $S^2$ where $S$ is the union of all regular semisimple conjugacy classes. Since $S$ is open in $G$ \cite[\S3.5 Corollary]{Steinberg_classes}, we have $S^2=G$.
\end{proof}

\begin{proof}[Proof of Proposition \ref{normal_sets_prod_order}]
  Let $D_i\in\calD(N_i)$ be diagrams such that $\{D_i:1\leq 1\leq k\}$ is of type $(r,6)$ with respect to $D_{i_1},\dots ,D_{i_r}$ and such that $\sum_{j=1}^{r}D_{i_j}\geq\calD^{\circ}$. Let $C_i\subseteq N_i$ be a conjugacy class such that $D_i\in\calD(C_i)$. Eventually after reindexing, we may assume that $N_1\cdots N_k$ contains
  $$
  \overbrace{\underbrace{C_1C_2\cdots C_6}_{D_1\leq D_2,\dots,D_6}}^{\tilde N_1}
  \overbrace{\underbrace{C_7C_8\cdots C_{12}}_{D_7\leq D_8,\dots,D_{12}}}^{\tilde N_7}
  \cdots
  \overbrace{\underbrace{C_{k}C_{k+1}\cdots C_{k+6}}_{D_k\leq D_{k+1},\dots,D_{k+6}}}^{\tilde N_k}
  \cdots
  $$
where $D_{6j+1}=D_{i_j}$. Let $\Delta_j=\supp(D_{i_j})$. As in the proof of Proposition \ref{U_regular_T}, $\tilde N_i$ contains the torus $T_j=T\cap G(\Delta_j)$. Thus, the product $\tilde N_1\cdots \tilde N_k$ contains $T_1\cdots T_k$ and since $\sum_{j=1}^{r}D_{i_j}\geq\calD^{\circ}$, the torus $T_1\cdots T_k$ is a maximal torus. Hence $\tilde N_1\cdots \tilde N_k$ contains an open set of semisimple elements.
  \end{proof}

In what follows we use the standard numering of the simple roots in $\Delta$ - see for instance \cite[p.10]{Liebeck_Seitz} or \cite[p.67]{Malle_Testerman}.

\begin{lem}
  \label{regular_torus_A}
  Let $G$ be of type $A_r$. If $\Delta=\{\alpha_1,\dots,\alpha_r\}$ are the simple roots ordered as above then the torus $\prod_{i=1}^{\lceil r/2\rceil}\alpha_{2i-1}^{\vee}(k^{\times})$ is regular.
\end{lem}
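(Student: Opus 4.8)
The plan is to show that $S:=\prod_{i=1}^{\lceil r/2\rceil}\alpha_{2i-1}^{\vee}(k^{\times})$ is \emph{regular}, i.e. that it contains a regular semisimple element; equivalently, that no root $\beta\in\Phi$ restricts trivially to $S$, so that a generic $s\in S$ satisfies $\beta(s)\neq 1$ for all $\beta\in\Phi$ and hence $C_G(s)^{\circ}=T$.

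First I would translate this into root-datum combinatorics. By (\ref{cochar_action}), $\beta(\alpha_m^{\vee}(t))=t^{\langle\beta,\alpha_m\rangle}$ for $t\in k^{\times}$, so for $s=\prod_{i}\alpha_{2i-1}^{\vee}(t_i)$ one has $\beta(s)=\prod_{i}t_i^{\langle\beta,\alpha_{2i-1}\rangle}$. As $k^{\times}$ is infinite, the restriction $\beta|_S$ is trivial if and only if $\langle\beta,\alpha_m\rangle=0$ for every odd $m$. Since $-\beta$ restricts to the inverse character, it suffices to rule this out for positive roots.

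Next I would write each positive root of $A_r$ as $\beta_{a,b}=\alpha_a+\cdots+\alpha_b$ with $1\le a\le b\le r$ and compute $\langle\beta_{a,b},\alpha_m\rangle=\sum_{l=a}^{b}\langle\alpha_l,\alpha_m\rangle$ from the tridiagonal Cartan matrix of $A_r$. This sum vanishes unless $m\in\{a-1,a,b,b+1\}$, where it equals $-1$ for $m\in\{a-1,b+1\}$ and $+1$ for $m\in\{a,b\}$ (and $2$ when $a=b=m$). The crux is then a parity observation on the consecutive indices $a-1,a$: exactly one of them is odd, so if $a$ is odd take $m=a$, and if $a$ is even take $m=a-1\ge 1$. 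In either case there is an odd $m\in\{1,\dots,r\}$ with $\langle\beta_{a,b},\alpha_m\rangle\neq 0$, whence $\beta_{a,b}|_S\neq 1$.

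The argument is elementary once the pairing is computed, so I do not expect a serious obstacle; the only care needed is in the boundary cases, namely confirming $a-1\ge 1$ when $a$ is even and that the chosen pairing is genuinely nonzero, including for the simple roots $\beta_{a,a}=\alpha_a$. Having shown that no root vanishes identically on $S$, I conclude that $S$ contains a regular semisimple element and is therefore regular.
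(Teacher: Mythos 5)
Your proof is correct and follows essentially the same route as the paper: both reduce regularity of the torus to the statement that no root of $A_r$ pairs trivially with every odd-indexed simple root. The only difference is that you carry out this verification explicitly via the Cartan matrix and the parity of $a-1,a$ for $\beta=\alpha_a+\cdots+\alpha_b$, where the paper leaves it as a check in the $\varepsilon_i-\varepsilon_j$ model of the root system.
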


\begin{proof}
  Let $\tilde T=\prod_{i=1}^{\lceil r/2\rceil}\alpha_{2i-1}^{\vee}(k^{\times})$ and let $\tilde\Delta=\{\alpha_1,\alpha_3,\dots\alpha_{2\lceil r/2\rceil-1}\}$. Since $\tilde T\subseteq T$, by \cite[II \S4.1]{Springer_Steinberg} we have $C_{G}(\tilde T)^{\circ}=\langle T,U_{\alpha}:\alpha(t)=1\text{ for all }t\in \tilde T\rangle$. For a root $\alpha\in\Phi$ we have $\alpha(t)=t^{\langle\alpha,\beta\rangle}$ for any $t\in k^{\times}$ as in \eqref{cochar_action}. Hence, $\alpha(t)=1$ for all $t\in \tilde T$ if and only if $\langle \alpha,\beta\rangle=0$ for all $\beta\in\tilde\Delta$, i.e. if and only if $\alpha$ is orthogonal to $\tilde\Delta$. One checks, for example with \cite[\S2.10]{Humphreys_Ref_Coxeter}, that no root $\alpha$ is orthogonal to $\tilde\Delta$. Hence $C_G(\tilde T)^{\circ}=T$, i.e. $\tilde T$ is a regular torus.
\end{proof}

\begin{lem}
  \label{regular_torus_elem}
  Let $\tilde T\subseteq T$ be a regular torus of $G$. For any $\tilde z\in T$, the set $\tilde z\tilde T$ contains an open subset of regular elements.
\end{lem}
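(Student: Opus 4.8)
The plan is to use Steinberg's criterion for regularity of torus elements together with the irreducibility of the coset $\tilde z\tilde T$. First I would recall that, since every element of $T$ is semisimple, such an element $s\in T$ is regular if and only if the identity component of its centralizer is a maximal torus, i.e.\ $C_G(s)^{\circ}=T$ (see \cite[\S3.5 Proposition 5]{Steinberg_classes}); and by \cite[II \S4.1]{Springer_Steinberg} one has $C_G(s)^{\circ}=\langle T,U_{\alpha}:\alpha(s)=1\rangle$, which equals $T$ exactly when no root is trivial on $s$. Consequently the set $T_{\mathrm{reg}}$ of regular elements of $T$ is the complement of the finite union $\bigcup_{\alpha\in\Phi}B_{\alpha}$, where $B_{\alpha}=\{s\in T:\alpha(s)=1\}$ is closed; in particular $T_{\mathrm{reg}}$ is open in $T$.

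Next I would observe that the coset $\tilde z\tilde T$ is a translate of the subtorus $\tilde T$, hence an irreducible closed subvariety of $T$. The set $\tilde z\tilde T\cap T_{\mathrm{reg}}$ is then open in $\tilde z\tilde T$, being the complement in $\tilde z\tilde T$ of the closed set $\tilde z\tilde T\cap\bigcup_{\alpha}B_{\alpha}$. It therefore suffices to prove that this intersection is non-empty, for then it is exactly the desired open subset of regular elements of $\tilde z\tilde T$.

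To establish non-emptiness I would argue by irreducibility. If $\tilde z\tilde T\cap T_{\mathrm{reg}}=\emptyset$, then $\tilde z\tilde T$ is contained in the finite union $\bigcup_{\alpha\in\Phi}B_{\alpha}$ of closed subsets, so by irreducibility $\tilde z\tilde T\subseteq B_{\beta}$ for some single root $\beta$. This means $\beta(\tilde z t)=1$ for all $t\in\tilde T$; taking $t=1$ gives $\beta(\tilde z)=1$, whence $\beta(t)=1$ for all $t\in\tilde T$, i.e.\ $\beta$ restricts trivially to $\tilde T$.

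The main obstacle is precisely ruling out this last possibility, and here the hypothesis that $\tilde T$ is regular enters decisively: by definition $C_G(\tilde T)^{\circ}=T$, and by \cite[II \S4.1]{Springer_Steinberg} this centralizer equals $\langle T,U_{\alpha}:\alpha|_{\tilde T}=1\rangle$, so no root can be trivial on $\tilde T$. This contradicts $\beta|_{\tilde T}=1$, completing the argument. The only subtlety is the standard fact that an irreducible variety contained in a finite union of closed sets lies in one of them, which makes the reduction to a single root $\beta$ legitimate.
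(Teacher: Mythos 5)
Your proof is correct and follows essentially the same route as the paper: both identify the regular elements of $T$ as the complement of the finitely many closed subsets $\ker\alpha\cap T$ (equivalently $C_G(U_\alpha)\cap T$), use the regularity of $\tilde T$ to see that the irreducible coset $\tilde z\tilde T$ is not contained in any one of them, and conclude that the open set of regular elements in $\tilde z\tilde T$ is non-empty. The only cosmetic difference is that the paper shows each $\tilde z\tilde T\cap C_G(U_\alpha)$ is a proper closed subset via an open-orbit dimension count on $U_\alpha$, whereas you argue directly that a root trivial on the whole coset $\tilde z\tilde T$ would restrict trivially to $\tilde T$, contradicting $C_G(\tilde T)^{\circ}=T$.
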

\begin{proof}
  Fix $\alpha\in\Phi$. 
  Since $\tilde T$ and $\tilde z\tilde T$ lie in $T$, the root subgroup $U_{\alpha}$ is stable by conjugation under the elements of these closed subsets. Since $\tilde T$ is a regular torus, it has an open orbit on $U_{\alpha}$. Thus, restricting the conjugation map to $\tilde z\tilde T$ one also obtains an open orbit on $U_{\alpha}$. Hence $\dim(C_G(U_{\alpha})\cap \tilde z\tilde T)\lneq \dim(z\tilde T)$, i.e. $z\tilde T\setminus C_G(U_{\alpha})$ is an open subset of $\tilde z\tilde T$. Consider $V=\tilde z\tilde T\setminus \cup_{\alpha\in\Phi}C_G(U_{\alpha})=\cap_{\alpha\in\Phi}\tilde z\tilde T\setminus C_G(U_{\alpha})$. Since $|\Phi|$ is finite, $V$ is an open subset of $\tilde z\tilde T$. Since the ground field is algebraically closed, $V$ is non-empty. For any $t\in V$ and any root subgroup $U_{\alpha}$ we have $[t,U_{\alpha}]\neq 1$, thus, by \cite[II \S4.1]{Springer_Steinberg}, all elements of $V$ are regular.
  \end{proof}

\begin{lem}
  \label{thmA_typeA}
  Let $G$ be of type $A_r$. Let $\tilde \Delta=\{\alpha_{2i-1}:1\leq i\leq\lceil r/2\rceil\}$ and let $\tilde T=\prod_{i=1}^{\lceil r/2\rceil}\alpha_{2i-1}^{\vee}(k^{\times})$. Let $N_1,\dots,N_k$ be normal subsets of $G$ and let $D_{i} \in \calD(N_{i})$ for all $1 \leq i \leq k$.
  If $\sum_{i=1}^{k} D_{i}\geq 2\calD^{\circ}$, then $\prod_{i=1}^{k}N_i$ contains an open subset of $\tilde T\tilde z$ for some $\tilde z\in T$. In particular, Theorem \ref{normal_sets_prod_regular} holds for $G$ of type $A_r$. 
\end{lem}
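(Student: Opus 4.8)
The plan is to reduce everything to the regular torus $\tilde T$ and to sweep out an open piece of one of its cosets inside the product, adapting the proof of Proposition \ref{U_regular_T} to the single family of pairwise orthogonal rank-one subgroups indexed by $\tilde\Delta$ and, crucially, to $\calU^2$ in place of $\calU^3$. First I would choose conjugacy classes $C_i\subseteq N_i$ with $D_i\in\calD(C_i)$; since $\prod_iN_i\supseteq\prod_iC_i$ it suffices to argue with the $C_i$. Because $\tilde T$ is a regular torus by Lemma \ref{regular_torus_A} and every coset $\tilde T\tilde z\subseteq T$ consists of semisimple elements whose regular ones form a nonempty open subset by Lemma \ref{regular_torus_elem}, the final assertion (and hence Theorem \ref{normal_sets_prod_regular} for type $A_r$) will follow once I show that $\prod_iC_i$ contains an open subset of $\tilde T\tilde z$ for some $\tilde z\in T$.

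I would then fix representatives $g_i=s_iu_i\in B$ of $C_i$, with $s_i\in T$ and $u_i\in U$, normalised by Proposition \ref{su_reg} and Lemma \ref{regular_1} so that the simple-root projection of $u_i$ has support exactly $\supp(D_i)$; thus $\prod_iC_i$ contains $\prod_i s_i(u_i)^T$. The only combinatorial input needed is that, because each $D_i$ has $0/1$ coefficients, the hypothesis $\sum_iD_i\geq 2\calD^\circ$ forces every odd simple root $\alpha\in\tilde\Delta$ to lie in $\supp(D_i)$ for at least two indices $i$; write $m_\alpha\geq2$ for the number of such indices.

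The heart of the argument runs as in Proposition \ref{U_regular_T}, but on the commuting family $\{G_\alpha:\alpha\in\tilde\Delta\}$ (the odd nodes are pairwise non-adjacent, so these rank-one groups commute and $\tilde T=\prod_{\alpha\in\tilde\Delta}(T\cap G_\alpha)$). Conjugating each factor $C_i$ by elements of $G_\alpha$ ($\alpha\in\tilde\Delta$) and absorbing the resulting correction $s_i^{\,g}s_i^{-1}\in G_\alpha$ into the rank-one part keeps $\prod_i s_i\in T$ and confines all remaining terms to the unipotent radical $Q=Q_{\tilde\Delta}$ (which contains $[U,U]$ and every even root group); meanwhile, for each $\alpha\in\tilde\Delta$, the collected $\alpha$-contribution becomes a product $\prod_{i:\alpha\in\supp(D_i)}v_{i,\alpha}$ of $m_\alpha$ nontrivial unipotents of $G_\alpha$, each of which the conjugation freedom lets me take to be an arbitrary nontrivial unipotent, so that this contribution ranges over $(\text{ntu})^{m_\alpha}$ in $G_\alpha$. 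In the rank-one group $G_\alpha$ the product $U_\alpha U_{-\alpha}$ already meets a representative of every regular semisimple class (a direct $\SL_2$ computation), so by normality $(\text{ntu})^2$ contains all regular semisimple elements, in particular an open subset of $T\cap G_\alpha$; moreover a single nontrivial unipotent is itself a product of two, so $(\text{ntu})^2\subseteq(\text{ntu})^{m_\alpha}$ for every $m_\alpha\geq2$. Since the $G_\alpha$ commute, letting $\alpha$ range over $\tilde\Delta$ shows that the torus part of the product sweeps an open subset of $\tilde T$.

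This produces, for $t$ in an open subset of $\tilde T$, an element $\tilde z\,t\,q_t\in\prod_iC_i$ with $\tilde z=\prod_i s_i\in T$ and $q_t\in Q$. Whenever $\tilde z t$ is regular---an open condition on $t$ by Lemma \ref{regular_torus_elem}---the tail is absorbed exactly as in the last line of the proof of Proposition \ref{U_regular_T}: $\tilde z t q_t$ is $Q$-conjugate to $\tilde z t$, and since $\prod_iC_i$ is a normal set it then contains $\tilde z t$ itself. Hence $\prod_iC_i$ contains an open subset of $\tilde T\tilde z$, consisting of regular semisimple elements. I expect the main obstacle to be precisely the bookkeeping of the third step: checking that the independent $G_\alpha$-conjugations can be performed so that each $\alpha$-contribution sweeps its full nontrivial unipotent class while all other terms are driven into $Q$, so that the torus part genuinely surjects onto an open subset of $\tilde T$ modulo the unipotent radical. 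The commutator relations and Lemma \ref{T_conj} are what I would use to keep this control.
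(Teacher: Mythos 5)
Your proposal is correct, and its overall skeleton coincides with the paper's: both arguments pass to the standard parabolic $P_{\tilde\Delta}=LQ$ whose Levi has derived group of type $A_1^{\lceil r/2\rceil}$ supported on the odd nodes, both extract from $\sum_{i=1}^{k}D_i\geq 2\calD^{\circ}$ (with each $D_i$ binary) that every $\alpha\in\tilde\Delta$ receives at least two non-central rank-one contributions, both observe that the collected central part $\tilde z$ is independent of the conjugators, and both finish the same way, via Lemmas \ref{regular_torus_A} and \ref{regular_torus_elem} and absorption of the $Q$-tail into a conjugate of the regular semisimple element. The genuine divergence is the engine that sweeps an open subset of $\tilde T$. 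The paper factors $g_i=l_iq_i$ with $l_i=\bigl(\prod_j l_{i,j}\bigr)z_i$, where $l_{i,j}\in G_{\alpha_j}\cap B$ may be of mixed type, and simply quotes \cite[Theorem 2]{Gordeev_Saxl_1}: a product of two non-central classes in the rank-one group $G_{\alpha_j}$ contains an open subset of $G_{\alpha_j}$, hence --- the product of classes being conjugation-invariant --- an open subset of $T\cap G_{\alpha_j}$. You instead force the rank-one contributions to be honestly unipotent and prove the rank-one statement by hand: in $\SL_2$ the product $\left(\begin{smallmatrix}1&a\\0&1\end{smallmatrix}\right)\left(\begin{smallmatrix}1&0\\b&1\end{smallmatrix}\right)$ has trace $2+ab$, so the square of the nontrivial unipotent class meets, hence contains, every regular semisimple class and in particular an open subset of $T\cap G_{\alpha}$; your remark that a nontrivial unipotent is itself a product of two handles $m_\alpha>2$. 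This buys self-containedness, replacing the Gordeev--Saxl citation by an elementary computation, at the cost of exactly the bookkeeping you flag: one must verify that conjugating $g_i$ by $x_\alpha\in G_\alpha$ when $\alpha(s_i)\neq 1$ still yields a representative $s_iqv$ with the \emph{same} $s_i\in T$, some $q\in Q$, and $v_{i,\alpha}$ an arbitrary nontrivial unipotent. That verification does go through, and is the same point used implicitly in the proof of Proposition \ref{U_regular_T}: the conjugation action of $s_i$ on $G_\alpha$ agrees with that of some $\sigma\in T\cap G_\alpha$ (as $\alpha$ is surjective on $T\cap G_\alpha$ over an algebraically closed field); if $\sigma$ is non-central then $\sigma w$ is regular semisimple with the eigenvalues of $\sigma$ for \emph{every} unipotent $w$, so all such $\sigma w$ are $G_\alpha$-conjugate and your folded correction $s_i^{-1}s_i^{x_\alpha}v_{i,\alpha}^{x_\alpha}$ sweeps all nontrivial unipotents, while if $\sigma$ is central the full $G_\alpha$-conjugation freedom acts on $v_{i,\alpha}$ directly; in either case the corrections stay in $\prod_{\alpha\in\tilde\Delta}G_\alpha$, the tails stay in $Q$ (which indeed contains $[U,U]$ and all even root groups, since $\Phi^+_L=\tilde\Delta$), and the intervening torus conjugations by the fixed $s_j$ preserve nontrivial unipotency. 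So both proofs are sound; the paper's is shorter and handles mixed rank-one components directly, while yours is more elementary and makes the rank-one input explicit.
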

\begin{proof}
  Let $D_i\in\calD(N_i)$ be diagrams such that $\sum_{i=1}^{k}D_i\geq 2\calD^{\circ}$. Let $C_i\subseteq N_i$ be a conjugacy class such that $D_i\in\calD(C_i)$. It is enough to show that $\prod_{i=1}^{k}C_i$ contains a regular semisimple element. Let $g_i\in C_i$ be such that $\calD(g_i)=D_i$.
  
  Notice that $\tilde T$ is a maximal torus of the derived group $[L_{\tilde\Delta},L_{\tilde\Delta}]$ of the Levi subgroup $L_{\tilde\Delta}$ of type $A_1^{\lceil r/2\rceil}$. Thus, the representatives factor as $g_i=l_iq_i$ for some $l_i\in L$ and $q_i\in Q$. Moreover $l_i=(\prod_{j=1}^{\lceil r/2\rceil}l_{i,j})z_i$ for some $l_{i,j}\in G_{\alpha_{j}}\cap B$ and $z_i\in Z(L)$. For $x\in L$ we have 
  $$
  \prod_{i=1}^{k}g_i^{x}
  =
  \prod_{i=1}^{k}(\prod_{j=1}^{\lceil r/2\rceil}l_{i,j}^{x})(z_iq_i)^{x}
  =
  \left[\prod_{i=1}^{k}\prod_{j=1}^{\lceil r/2\rceil}l_{i,j}^{x}\right]\tilde z\tilde q
  =
  \left[\prod_{j=1}^{\lceil r/2\rceil}\prod_{i=1}^{k}l_{i,j}^{x}\right]\tilde z\tilde q
  $$
  for some $\tilde z\in Z(L)$ and $\tilde q\in Q$. Since $\sum_{i=1}^{k}D_i\geq 2\calD^{\circ}$, for each $j$ there are at least $2$ elements in $\{l_{i,j}:1\leq i\leq \lceil r/2\rceil\}$ which are not central in $G_{\alpha_j}$. Then $\prod_{i=1}^{k}l_{i,j}^{L}$ contains an open subset of $G_{\alpha_{2i-1}}\subseteq [L,L]$ by \cite[Theorem 2]{Gordeev_Saxl_1}. Thus $\{\prod_{j=1}^{\lceil r/2\rceil}\prod_{i=1}^{k}l_{i,j}^{x}:x\in L\}$ contains an open subset of $\tilde T$. 
  Notice that, since $z_i\in Z(L)$, $\tilde z$ depends only on the choice of representatives $g_i$ and not on $x\in L$. 
  By Lemma \ref{regular_torus_elem}, there is an open subsets of regular (semisimple) elements $V\subseteq \tilde T\tilde z$. Thus, for any $\tilde t\tilde z\in V$, $\tilde t\tilde z\tilde q$ is conjugate to $\tilde t\tilde z$. Hence, the normal subset $\prod_{i=1}^{k}N_i$ contains $V$.
  Since the ground field is algebraically closed, $V$ is non-empty.
  \end{proof}

\begin{proof}[Proof of Theorem \ref{normal_sets_prod_regular}]
  Let $D_i\in\calD(N_i)$ be diagrams such that $\sum_{i=1}^{k}D_i\geq 16\calD^{\circ}$. Let $C_i\subseteq N_i$ be a conjugacy class such that $D_i\in\calD(C_i)$. It is enough to show that $\prod_{i=1}^{k}C_i$ contains a regular semisimple element. Let $g_i\in C_i$ be such that $\calD(g_i)=D_i$. Notice that $k\geq 16$ with equality if and only if all $D_i$ are regular. If $k=16$ the claim follows from Proposition \ref{U_regular_T}. Let $k\geq 17$. If $\rk(G)\leq 8$ then $\prod_{i=1}^{k}C_i$ contains an open subset of $G$ by \cite[Theorem 2]{Gordeev_Saxl_1} and the claim follows.

  Suppose $r=\rk(G)> 8$ and let $\Delta_1=\{\alpha_1,\dots,\alpha_{r-4}\}$ and $\Delta_2=\{\alpha_{r-2},\alpha_{r-1},\alpha_{r}\}$. Consider the parabolic subgroup $P_{\Delta_1\cup\Delta_2}$ with Levi factor $L$ and unipotent radical $Q$. Then $g_i=l_iq_i$ for some $l_i\in L$ and $q_i\in Q$. Moreover, $L$ is a connected reductive group with a simple factor $H_1$ of type $A_{r-4}$ corresponding to $\Delta_1$ and a simple factor $H_2$ corresponding to $\Delta_2$. Thus
  $$
  g_i=g_{i,1}g_{i,2}z_iq_i
  $$
  for some $g_{i,1}\in H_1$, $g_{i,2}\in H_2$ and $z_i\in Z(L)$. For $x\in L$ we have 
  $$
  \prod_{i=1}^{k}g_i^{x}
  =
  \prod_{i=1}^{k}g_{i,1}^{x}g_{i,2}^{x}(z_iq_i)^{x}
  =
  \left[\prod_{i=1}^{k}g_{i,1}^{x}g_{i,2}^{x}\right]\tilde z\tilde q
  =
  \left[\prod_{i=1}^{k}g_{i,1}^{x}\right]\left[\prod_{i=1}^{k}g_{i,2}^{x}\right]\tilde z\tilde q
  $$
  for some $\tilde q\in Q$ and $\tilde z\in Z(L)$. Notice that, since $z_i\in Z(L)$, $\tilde z$ depends only on the choice of representatives $g_i$ and not on $x\in L$.

  Let $\Delta_1'=\{\alpha_{2i-1}:1\leq i\leq\lceil r/2-2\rceil\}$, let $\tilde T=\prod_{i=1}^{\lceil r/2-2\rceil}\alpha_{2i-1}^{\vee}(k^{\times})$ and let $T_{\Delta_2}=T\cap H_2$. Since $\sum_{i=1}^{k}D_i\geq 16\calD^{\circ}$, the elements $g_{i,1}$ and $g_{i,2}$ are non-central in $H_1$ and $H_2$ respectively. Moreover $k\geq 17$. Since $H_1$ and $H_2$ commute, by Lemma \ref{regular_torus_A} and \cite[Theorem 2]{Gordeev_Saxl_1}, $\prod_{i=1}^{k}C_i$ contains $\tilde TT_{\Delta_2}$ up to multiplication on the right by elements in $\tilde zQ$. 

  We claim that the torus $\tilde TT_{\Delta_2}$ is regular. If this is the case then, by Lemma \ref{regular_torus_elem}, there is a regular element $\tilde t\tilde z\in \tilde TT_{\Delta_2}\tilde z$, thus, all elements in $\tilde t\tilde z Q$ are regular semisimple and the proof is finished.
  

  For the proof of the claim, let $\Delta^{\perp}=\{\alpha\in\Phi:\alpha\perp \beta\text{ for all }\beta\in \Delta_1'\cup \Delta_2\}$. We have $C_G(\tilde TT_{\Delta_2})^{\circ}=\langle T, U_{\alpha}:\alpha\in\Delta^{\perp}\rangle$ and need to show that $\Delta^{\perp}=\emptyset$.
  For this we use the explicit construction of root systems as described in \cite[\S2.10]{Humphreys_Ref_Coxeter}. If $G$ is of type $A$, let $\alpha=\varepsilon_i-\varepsilon_j$ ($1\leq i\neq j\leq r+1$) then either $\beta=\varepsilon_{i-1}-\varepsilon_{i}$ or $\beta=\varepsilon_{i}-\varepsilon_{i+1}$ is an element of $\Delta_1'\cup\Delta_2$, hence $\Delta^{\perp}=\emptyset$. If $G$ is of type $B$ and $\alpha$ is the short root $\pm\varepsilon_i$ ($1\leq i\leq r$) then either $\beta=\varepsilon_{i-1}-\varepsilon_{i}$ or $\beta=\varepsilon_{i}-\varepsilon_{i+1}$ or $\beta=\varepsilon_{i}$ is an element of $\Delta_1'\cup\Delta_2$, hence $\alpha\not\perp \Delta_1'\cup \Delta_2$. If $\alpha$ is the long root $\pm\varepsilon_i\pm\varepsilon_j$ ($1\leq i< j\leq r$), again $\beta=\varepsilon_{i-1}-\varepsilon_{i}$ or $\beta=\varepsilon_{i}-\varepsilon_{i+1}$ or $\beta=\varepsilon_{i}$ is an element of $\Delta_1'\cup\Delta_2$, hence $\alpha\not\perp \Delta_1'\cup \Delta_2$. If $G$ is of type $C$ or $D$, the argument is similar and the claim follows.
  \end{proof}

\end{document}